\documentclass[11pt,a4paper,reqno]{amsart}
\usepackage{amsmath}
\usepackage{amssymb}
\usepackage{euscript}
\usepackage[all]{xy}
\usepackage{color}
\usepackage{setspace}
\usepackage[colorlinks=true,linktocpage=true,pagebackref=false, citecolor=black,linkcolor=black]{hyperref}

\usepackage{tikz,pgfplots}
\pgfplotsset{compat=1.10}
\usepgfplotslibrary{fillbetween}
\usetikzlibrary{cd,arrows,decorations.pathmorphing,backgrounds,automata,positioning,fit,matrix}
\pgfplotsset{soldot/.style={color=black,only marks,mark=*}} \pgfplotsset{holdot/.style={color=black,fill=white,only marks,mark=*}}

\hoffset=-17mm
\voffset=-15mm
\setlength{\textwidth}{16cm}
\setlength{\textheight}{25.1cm}
\raggedbottom

\parskip=0.9ex


\newtheorem{thm}{Theorem}[section]

\newtheorem{lem}[thm]{Lemma}
\newtheorem{prop}[thm]{Proposition}

\newtheorem{cor}[thm]{Corollary}

\theoremstyle{definition}
\newtheorem{defn}[thm]{Definition}

\newtheorem{defns}[thm]{Definitions}

\theoremstyle{remark}
\newtheorem{remark}[thm]{Remark}
\newtheorem{remarks}[thm]{Remarks}

\newtheorem{examples}[thm]{Examples}

\setcounter{secnumdepth}{4}


 \newcommand{\N}{{\mathbb N}}
\newcommand{\Z}{{\mathbb Z}} \newcommand{\R}{{\mathbb R}}
 \newcommand{\C}{{\mathbb C}}


\newcommand{\gtp}{{\mathfrak p}} \newcommand{\gtq}{{\mathfrak q}}
\newcommand{\gtm}{{\mathfrak m}} \newcommand{\gtn}{{\mathfrak n}}
\newcommand{\gta}{{\mathfrak a}} \newcommand{\gtb}{{\mathfrak b}}
\newcommand{\gtP}{{\mathfrak P}} \newcommand{\gtQ}{{\mathfrak Q}}


\newcommand{\Ddd}{{\EuScript D}^{\diam}}
\newcommand{\Zzd}{{\EuScript Z}^{\diam}}
\newcommand{\Dd}{{\EuScript D}}
\newcommand{\Zz}{{\EuScript Z}}
\newcommand{\Dn}{{D}}
\newcommand{\Zn}{{Z}}

\newcommand{\Pp}{{\EuScript P}}


\newcommand{\reg}{\operatorname{reg}}
\newcommand{\Int}{\operatorname{Int}}
\newcommand{\dist}{\operatorname{dist}}

\newcommand{\Spec}{\operatorname{Spec}}
\newcommand{\ord}{\operatorname{ord}}
\newcommand{\cl}{\operatorname{Cl}}
\newcommand{\diam}{{\text{\tiny$\displaystyle\diamond$}}}
\newcommand{\Specs}{\operatorname{Spec}}
\newcommand{\Speca}{\operatorname{Spec^*}}
\newcommand{\Specd}{\operatorname{Spec^{\diam}}}
\newcommand{\betas}{\operatorname{\beta}}
\newcommand{\betaa}{\operatorname{\beta^*}}
\newcommand{\betad}{\operatorname{\beta^\diam}}


\newcommand{\x}{{\tt x}}  
 \renewcommand{\t}{{\tt t}}


\newcommand{\veps}{\varepsilon}

\newcommand{\ol }{\overline}

\newcommand{\Cc}{\mathcal{C}}

\makeatletter \def\l@subsection{\@tocline{2}{0pt}{1pc}{5pc}{}} \def\l@subsection{\@tocline{2}{0pt}{2pc}{6pc}{}} \makeatother
\begin{document}

\title[Semialgebraic branched coverings]{Spectral maps associated to\\ semialgebraic branched coverings}
\author{E. Baro}
\author{Jose F. Fernando}
\author{J.M. Gamboa}
\address{Departamento de \'Algebra, Geometr\'ia y Topolog\'ia, Facultad de Ciencias Matem\'aticas, Universidad Complutense de Madrid, 28040 MADRID (SPAIN)}
\email{eliasbaro@pdi.ucm.es, josefer@mat.ucm.es, jmgamboa@mat.ucm.es}

\subjclass[2010]{Primary: 14P10, 54C30; Secondary: 12D15, 13E99}
\keywords{Semialgebraic set, semialgebraic function, branched covering, branching locus, ramification set, ramification index, Zariski spectra, spectral map, collapsing set.}
\date{30/08/2020}
\thanks{Authors supported by Spanish STRANO MTM2017-82105-P and Grupos UCM 910444}

\begin{abstract}
In this article we prove that a semialgebraic map $\pi:M\to N$ is a branched covering if and only if its associated spectral map is a branched covering. In addition, such spectral map has a neat behavior with respect to the branching locus, the ramification set and the ramification index. A crucial fact to prove the preceding result is the characterization of the prime ideals whose fibers under the previous spectral map are singletons. 
\end{abstract}

\maketitle

\section{Introduction}\label{s3}
The primary goal of semialgebraic geometry is to study the set of solutions of a finite system of polynomial inequalities in a finite number of variables with coefficients in the field $\R$ of real numbers or, more generally, in an arbitrary real closed field. Frequently, one wants to do this without using polynomial data, as it happens in classical (complex) Algebraic Geometry, where one often avoids working explicitly with the systems of polynomials equalities and non-equalities involved. After the pioneer work of Delfs-Knebusch \cite{dk2}, where they introduced locally semialgebraic spaces and locally semialgebraic maps between them, real algebraic geometers realized the need of constructing their abstract counterpart. 

A subset $M\subset\R^m$ is a \textit{basic semialgebraic} if it can be written as
$$
M:=\{x\in\R^m:\ f(x)=0,g_1(x)>0,\ldots,g_{\ell}(x)>0\}
$$
for some polynomials $f,g_1,\ldots,g_{\ell}\in\R[\x_1,\ldots,\x_m]$. The finite unions of basic semialgebraic sets are called {\em semialgebraic sets}. A continuous map $f:M\to N$ between semialgebraic sets $M\subset\R^m$ and $N\subset\R^n$ is {\em semialgebraic} if its graph is a semialgebraic subset of $\R^{m+n}$. In general, semialgebraic map refers to a (non necessarily continuous) map whose graph is semialgebraic. However, as most of semialgebraic functions and maps appearing in this work are continuous, we omit for the sake of readability the continuity condition when referring to them. By Tarski-Seidenberg's Theorem semialgebraic sets can be characterized as the first order definable sets in the pure field structure of $\mathbb{R}$ (see \cite{dries}).

The sum and product of functions defined pointwise endow the set ${\mathcal S}(M)$ of semialgebraic functions on $M$ with a natural structure of (commutative) $\R$-algebra with unit. The subset ${\mathcal S}^*(M)$ of bounded semialgebraic functions on $M$ is an $\R$-subalgebra of ${\mathcal S}(M)$. We write ${\mathcal S}^\diam(M)$ to refer indistinctly to both rings and we denote $\Specd(M):=\Spec({\mathcal S}^\diam(M))$ the Zariski spectra of ${\mathcal S}^\diam(M)$ endowed with the Zariski topology. Recall that $M$ is a dense subset of $\Specd(M)$. We denote $\betad(M)$ the maximal spectrum of ${\mathcal S}^\diam(M)$, that is, the set of closed points of $\Specd(M)$. As ${\mathcal S}^\diam(M)$ is a Gelfand ring (that is, each prime ideal of ${\mathcal S}^\diam(M)$ is contained in a unique maximal ideal of ${\mathcal S}^\diam(M)$, see \cite{cc}), there exists a natural retraction ${\tt r}_M:\Specd(M)\to\betad(M)$, which is continuous \cite{mo}. Gelfand-Kolmogorov Theorem implies that $\betas(M)$ and $\betaa(M)$ are homeomorphic \cite[Thm.10.1]{t} (see \cite[Thm.3.5]{fg1} for an alternative proof). 

Each semialgebraic map $\pi:M\to N$ has associated a homomorphism of $\R$-algebras $\varphi_{\pi}^\diam:{\mathcal S}^\diam(N)\to{\mathcal S}^\diam(M),\ g\mapsto g\circ\pi$. Thus, one has morphisms
\begin{align*}
&\Specd(\pi):\Specd(M)\to\Specd(N),\ \gtp\mapsto(\varphi_{\pi}^\diam)^{-1}(\gtp),\\
&\betad(\pi):={\tt r}_N\circ\Specd(\pi)|_{\betad(M)}:\betad(M)\to\Specd(N)\to\betad(N),
\end{align*}
which are continuous and `extend' $\pi:M\to N$.

Morphisms between algebraic varieties over algebraically closed fields induce homomorphisms between their coordinate rings and the latter induce morphisms between their Zariski spectra. This is the classical approach to study morphisms between `geometric varieties' via `abstract morphisms' between affine schemes. In the real setting it was not clear neither which are the right rings of functions to deal with nor which should be the `real affine schemes'. The pioneer works \cite{br} of Brumfiel and \cite{cc} of Carral--Coste pointed out that (continuous) semialgebraic functions provide a suitable setting. 

Rings of semialgebraic functions present a key property: their Zariski and real spectra are canonically homeomorphic \cite[III.\S1]{s5}. The theory of the real spectrum introduced by Coste and Roy \cite[\S 7]{bcr} provides powerful tools to understand the interplay between the geometric and abstract settings. It is worthwhile to mention the theory of real closed spaces and real closed rings developed by Schwartz \cite{s2,s3,s4,s5,s6,s7}, which is much more than the abstraction of the geometric locally semialgebraic spaces studied in \cite{dk2} by Delfs--Knebusch. They are powerful tools, which involve a deep knowledge of Commutative Algebra, that help to understand the real spectrum of a ring $A$, as they reduce its study to decipher the Zariski spectrum of its \em real closure \em (which is a real closed ring universally associated to $A$, see \cite{s5}). For instance, the real closure of the ring of polynomials $\R[\x_1,\ldots,\x_n]$ is the ring ${\mathcal S}(\R^n)$ of (continuous) semialgebraic functions on $\R^n$.

The literature quoted above has a foundational nature. The articles \cite{bfg,fe1,fe2,fe3,fg1,fg2,fg3,fg4,fg5,fg6} are devoted to understand the relationship between a semialgebraic map $\pi:M\to N$ and its spectral map $\Specd(\pi):\Specd(M)\to\Specd(N)$ (using basic techniques that involve less prerequisites than the techniques commented above). This article focuses on the preceding relationship when $\pi:M\to N$ is a semialgebraic branched covering.

Branched coverings constitute a common and useful tool in many areas of Mathematics that appears often in Knot Theory, Orbifolds (quotients of manifolds under the discontinuous action of a group), (complex) Algebraic Geometry, (complex) Analytic Geometry, Riemann surfaces, etc. Given two topological spaces $X$ and $Y$, a continuous map $\pi:X\to Y$ is a \em finite quasi-covering \em if it is a separated, open, closed, surjective map whose fibers are finite (\S\ref{quasi}). Inspired by the theory of complex analytic coverings, we propose a notion of \em branched covering \em (\S\ref{branched}) adapted to find a consistent definition of ramification index function. Roughly speaking, a finite quasi-covering $\pi:X\to Y$ is a \em branched covering \em if $\pi$ locally behaves as a covering with a constant number of sheets (after removing certain subset with dense complementary, called the \em ramification set \em ${\mathcal R}_{\pi}$). The ramification set ${\mathcal R}_{\pi}$ is the image under $\pi$ of the \em branching locus \em ${\mathcal B}_{\pi}$, which is the set of points of $X$ at which $\pi$ is not a local homeomorphism. If the fibers at the points of $Y\setminus{\mathcal R}_{\pi}$ have constant cardinality $d$, we say that $\pi$ is a \em $d$-branched covering\em. There is a well-defined notion of \em ramification index \em at a point $x\in M$. Intuitively, it is the number of sheets that $\pi$ has close to $x$. Example \ref{notbranched} is an enlightening (counter-)example that shows the subtleties of the definition of branched covering.

A first goal is to analyze the notions above in the semialgebraic setting. Semialgebraic maps with similar properties have been already studied. For instance, Schwartz characterized openness of semialgebraic maps $\pi:M\to N$ with finite fibers \cite[Thm.13]{s3}: \em a semialgebraic map $\pi$ with finite fibers is open if and only if the homomorphism $\varphi_{\pi}:{\mathcal S}(N)\to{\mathcal S}(M)$ is flat\em. 
 
The main result of this article analyzes the behavior of the spectral map associated to a semialgebraic branched covering. 

\begin{thm}[Spectral map associated to a semialgebraic branched covering]\label{bc} 
Let $M\subset\R^m$ and $N\subset\R^n$ be semialgebraic sets and let $\pi:M\to N$ be a semialgebraic map. The following assertions are equivalent:
\begin{itemize}
\item[(i)] $\pi$ is a branched covering. 
\item[(ii)] $\Specd(\pi):\Specd(M)\to\Specd(N)$ is a branched covering.
\item[(iii)] $\betad(\pi):\betad(M)\to\betad(N)$ is a branched covering. 
\end{itemize}
In addition, if such is the case, then:
\begin{itemize}
\item ${\mathcal B}_{\Specd(\pi)}=\cl_{\Specd(M)}({\mathcal B}_\pi)$ and ${\mathcal B}_{\betad(\pi)}=\cl_{\betad(M)}({\mathcal B}_\pi)$.
\item ${\mathcal R}_{\Specd(\pi)}=\cl_{\Specd(N)}({\mathcal R}_\pi)$ and ${\mathcal R}_{\betad(\pi)}=\cl_{\betad(N)}({\mathcal R}_\pi)$.
\end{itemize}
\end{thm}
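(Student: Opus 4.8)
The plan is to prove the cycle of equivalences (i) $\Rightarrow$ (ii) $\Rightarrow$ (iii) $\Rightarrow$ (i), and to extract the statements about the branching locus and ramification set along the way. The backbone of the whole argument is that the spectral map $\Specd(\pi)$ is a ``geometric'' map over a spectral space which restricts to $\pi$ on the dense subset $M\subset\Specd(M)$, and which is separated, proper (closed with quasi-compact fibers), open and surjective precisely when $\pi$ has these properties; these facts either appear in the literature cited (\cite{fg1,fg2,fg3}) or follow from the standard dictionary between semialgebraic maps with finite fibers and their spectral maps, together with Schwartz's flatness criterion \cite[Thm.13]{s3} for openness. So the genuinely new content is the analysis of the \emph{local covering} condition off the branching locus, and the identification of $\mathcal{B}_{\Specd(\pi)}$ and $\mathcal{R}_{\Specd(\pi)}$. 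The crucial input, announced in the abstract, is the characterization of the prime ideals $\gtp\in\Specd(M)$ whose fiber $\Specd(\pi)^{-1}(\Specd(\pi)(\gtp))$ is a singleton: this is exactly what pins down the branching locus of the spectral map, since a finite quasi-covering fails to be a local homeomorphism at a point precisely where the fiber ``collapses'' or the number of sheets drops.

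First I would establish (i) $\Rightarrow$ (ii). Assuming $\pi$ is a $d$-branched covering, one knows $\pi$ is a finite quasi-covering, hence by the above dictionary $\Specd(\pi)$ is a finite quasi-covering of spectral spaces. One then shows $\Specd(\pi)$ restricts to a genuine $d$-sheeted covering over $\Specd(N)\setminus\cl_{\Specd(N)}(\mathcal{R}_\pi)$: on the dense open semialgebraic piece $N\setminus\mathcal{R}_\pi$ this is true by hypothesis, and one propagates it to the spectral level using that a point of $\Specd(N)$ specializing into $N\setminus\mathcal{R}_\pi$ has the same local structure, combined with the singleton-fiber characterization to control exactly which primes lie over $\cl(\mathcal{R}_\pi)$. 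This simultaneously gives $\mathcal{R}_{\Specd(\pi)}\subset\cl_{\Specd(N)}(\mathcal{R}_\pi)$; the reverse inclusion follows because the branching locus of a branched covering is closed with the property that $\pi|_{M\setminus\mathcal{B}_\pi}$ is a local homeomorphism but does not extend to a local homeomorphism across any boundary point of $\mathcal{B}_\pi$, and density of $M$ forces $\cl_{\Specd(M)}(\mathcal{B}_\pi)$ to be exactly the spectral branching locus. The equalities for $\mathcal{B}$ and $\mathcal{R}$ then drop out of this analysis. The step (ii) $\Rightarrow$ (iii) is the more formal one: apply the continuous retraction ${\tt r}_N:\Specd(N)\to\betad(N)$ and use that $\betad(\pi)={\tt r}_N\circ\Specd(\pi)|_{\betad(M)}$, checking that the quasi-covering and local-covering conditions are inherited by the restriction to closed points (here one uses that $\betad(M)\hookrightarrow\Specd(M)$ and the retraction is closed, and that ${\mathcal R}_{\betad(\pi)}=\betad(N)\cap{\mathcal R}_{\Specd(\pi)}=\cl_{\betad(N)}(\mathcal R_\pi)$).

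Finally, for (iii) $\Rightarrow$ (i), I would run the dictionary backwards: if $\betad(\pi)$ is a branched covering, then in particular it is separated, open, closed and surjective with finite fibers over closed points, and restricting to $M\subset\betad(M)$ (which lies in the set of closed points via the maximal-ideal identification and Gelfand--Kolmogorov) one recovers that $\pi$ itself is a finite quasi-covering; the local-covering condition for $\betad(\pi)$ off $\mathcal{R}_{\betad(\pi)}$ restricts to the corresponding condition for $\pi$ off $\mathcal{R}_{\betad(\pi)}\cap N$, which one identifies with (the closure of, hence containing) $\mathcal{R}_\pi$, and since $N\setminus\mathcal{R}_\pi$ has dense complement-free interior this is enough to conclude $\pi$ is a branched covering with the expected invariants.

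\medskip

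I expect the main obstacle to be the singleton-fiber characterization and its use to pin down $\mathcal{B}_{\Specd(\pi)}$ exactly, rather than merely up to closure. Controlling which prime ideals $\gtp$ of ${\mathcal S}^\diam(M)$ have $\Specd(\pi)^{-1}(\Specd(\pi)(\gtp))=\{\gtp\}$ requires a careful local study of the rings ${\mathcal S}^\diam$ and the behavior of $\varphi_\pi^\diam$ under localization — distinguishing the case where several sheets of $M$ come together at a ``finite'' prime from the case where they remain separated ``at infinity'' — and it is here that one must invoke, or reprove, the finer structural results on semialgebraic function rings from \cite{fg1,fg2,fg3}. A secondary delicate point is making sure that the equalities $\mathcal{B}_{\Specd(\pi)}=\cl_{\Specd(M)}(\mathcal B_\pi)$ and $\mathcal B_{\betad(\pi)}=\cl_{\betad(M)}(\mathcal B_\pi)$ are \emph{compatible} with the retraction ${\tt r}_M$, i.e.\ that ${\tt r}_M(\cl_{\Specd(M)}(\mathcal B_\pi))=\cl_{\betad(M)}(\mathcal B_\pi)$, which should follow from continuity and surjectivity of ${\tt r}_M$ together with density of $M$, but needs to be stated carefully.
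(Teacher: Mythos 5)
Your high-level plan (prove the cycle of equivalences and extract the $\mathcal{B}$- and $\mathcal{R}$-identities along the way) is the same as the paper's, and you correctly identify the singleton-fiber (collapsing set) characterization as the hard core. But there are two genuine gaps in your argument.

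First, you assert that because $\pi$ is a finite quasi-covering, ``by the above dictionary $\Specd(\pi)$ is a finite quasi-covering of spectral spaces.'' This is not automatic. Openness, closedness and surjectivity of $\Specd(\pi)$ indeed pass through from \cite{fg3,s2}, and finiteness of fibers follows from \cite[Prop.11]{s3}. What does \emph{not} come for free is that $\Specd(\pi)$ is \emph{separated}: $\Specd(M)$ is not Hausdorff, and two distinct primes in a fiber may in general be related by specialization, in which case they cannot be separated by disjoint open sets. The paper's Proposition \ref{sep2} proves separation precisely via the branched-covering structure: symmetric polynomials (Lemma \ref{sym}) show that $\varphi_\pi^\diam$ is an integral homomorphism, whence by incomparability (\cite[Cor.5.9]{am}) any two primes in a common fiber are incomparable, and then Lemma \ref{sep} (which exploits that the overprimes of a fixed prime form a chain in a real closed ring) produces the disjoint neighborhoods. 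This step depends essentially on $\pi$ being a branched covering (not merely a finite quasi-covering), because the well-definedness of the weights $b_\pi(x)$ used in $\sigma(f)$ requires it. Your plan silently assumes this away.

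Second, your argument for the reverse inclusion $\mathcal{B}_{\Specd(\pi)}\subset\cl_{\Specd(M)}(\mathcal{B}_\pi)$ is heuristic (``does not extend to a local homeomorphism across any boundary point of $\mathcal B_\pi$, and density of $M$ forces...''). A priori nothing prevents a prime $\gtp_1\in\mathcal{B}_{\Specd(\pi)}\setminus\cl_{\Specd(M)}(\mathcal{B}_\pi)$. The paper's proof is substantially more delicate: it shrinks to an exceptional neighborhood $\Ddd(f_1)$ of $\gtp_1$ on which $\Specd(\pi)$ is an $e$-branched covering with $e>1$ and $\gtp_1$ in the collapsing set, transfers via the partial homeomorphisms of Lemma \ref{homeo:fe_FG} to the genuine semialgebraic picture $\pi|_{D(f_1)}\colon D(f_1)\to D(g)$, applies Theorem \ref{colapse} to identify the spectral collapsing set with the closure of $\Cc_{\pi|_{D(f_1)}}$, and finally uses Lemma \ref{colapseinB} to deduce $\Cc_{\pi|_{D(f_1)}}\subset\mathcal{B}_\pi$. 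That chain of reductions is what makes the equality exact rather than only an inclusion, and it is not recoverable from density alone.

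One smaller point: your concern about compatibility of the retraction ${\tt r}_M$ with closures is a non-issue in the paper's approach, because Proposition \ref{sep2}(iii) shows $\Specd(\pi)$ already sends closed points to closed points, so $\betad(\pi)=\Specd(\pi)|_{\betad(M)}$ with no retraction needed; the passage from (ii) to (iii) is then a direct restriction, not a composite with ${\tt r}_N$.
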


The openness, closedness and surjectivity of $\Specd(\pi)$ follow from \cite{fg3,s2}. As it is well-known, the topological space $\Specd(M)$ is in general not Hausdorff. Thus, it is not clear why the spectral map $\Specd(\pi)$ of a semialgebraic branched covering $\pi:M\to N$ should be a separated map. We will prove this in Proposition \ref{sep2} through an analysis of the effect over ${\mathcal S}^\diam(M)$ of symmetric polynomials via the semialgebraic branched covering $\pi:M\to N$ (similarly to \cite[Thm. 12, Ch. III]{gr} for complex analytic coverings). Once this is proved (and consequently $\Specd(\pi)$ is a finite quasi-covering), it remains to be shown that the spectral map $\Specd(\pi)$ is in fact a branched covering. To ease the presentation we include in Section \ref{s5} several (surely known) technical results of topological nature that make the proof of Theorem \ref{bc} more readable.

Another important tool is the study of the set of points of $X$ at which `there is a complete collapse of the fibers of a finite quasi-covering $\pi:X\to Y$'. More precisely, the \em collapsing set \em $\Cc_\pi$ of a finite quasi-covering $\pi:X\to Y$ is the set of points $x\in X$ such that the fiber $\pi^{-1}(\pi(x))$ is a singleton. Given a semialgebraic $d$-branched covering $\pi:M\to N$, our purpose is to characterize the collapsing set $\Cc_{\Specd(\pi)}$ of the spectral map $\Specd(\pi):\Specd(M)\to\Specd(N)$. To that end we introduce in Section \ref{mu} a map $\mu^\diam:{\mathcal S}^\diam(M)\to{\mathcal S}^\diam(N)$, where $\mu^\diam(f)(y)$ is `intuitively' the weighted arithmetic mean with respect to the ramification index of the values of $f\in{\mathcal S}^\diam(M)$ at the points of the finite fiber $\pi^{-1}(y)$. The homomorphism $\varphi^\diam_{\pi}$ endows ${\mathcal S}^\diam(M)$ with a natural structure of ${\mathcal S}^\diam(N)$-module and the map $\mu^\diam$ is a homomorphism of ${\mathcal S}^\diam(N)$-modules. 

\begin{thm}\label{colapse} 
Let $\pi:M\to N$ be a semialgebraic $d$-branched covering. Then 
\begin{itemize}
\item[(i)] $\Cc_{\Specd(\pi)}$ is the set of prime ideals of ${\mathcal S}^\diam(M)$ that contain $\ker(\mu^\diam)$. 
\item[(ii)] $\Cc_{\Specd(\pi)}=\cl_{\Specd(M)}(\Cc_\pi)$.
\item[(iii)] $\Cc_{\betad(\pi)}$ is the set of maximal ideals of ${\mathcal S}^\diam(M)$ that contain $\ker(\mu^\diam)$. 
\item[(iv)] $\Cc_{\betad(\pi)}=\cl_{\betad(M)}(\Cc_\pi)$.
\end{itemize}
\end{thm}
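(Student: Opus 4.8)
The plan is to prove Theorem \ref{colapse} in four stages, leaning on the map $\mu^\diam$ and the already-established Theorem \ref{bc} (so that $\Specd(\pi)$ and $\betad(\pi)$ are branched coverings, with their branching loci and ramification sets the closures of the geometric ones). The key heuristic is that $\mu^\diam(f)(y)$ is a weighted mean of the values of $f$ on the fiber $\pi^{-1}(y)$, so that forcing $\mu^\diam$ to kill enough functions should force the fiber to be a single point; the technical content is transporting this from points of $M$ and $N$ to prime and maximal ideals of the rings of semialgebraic functions.

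First I would handle (i). For one inclusion, suppose $\gtp\in\Specd(M)$ satisfies $\Specd(\pi)^{-1}(\Specd(\pi)(\gtp))=\{\gtp\}$, i.e.\ $\gtp\in\Cc_{\Specd(\pi)}$; I must show $\ker(\mu^\diam)\subset\gtp$. Take $f\in\ker(\mu^\diam)$ and consider, for the relevant power sums / symmetric functions $s_k(f)=\mu^\diam(f^k)$ of $f$ along the fibers (these lie in ${\mathcal S}^\diam(N)$ by the module/symmetric-function machinery of Section \ref{mu}, cf.\ the symmetric polynomial analysis of Proposition \ref{sep2}), the universal polynomial identity expressing $\prod_{x\in\pi^{-1}(y)}(T-f(x))^{e_x}$ in terms of the $s_k(f)$. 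The point is that $f$ is integral over $\varphi^\diam_\pi({\mathcal S}^\diam(N))$ with a monic equation whose non-leading coefficients are (up to the normalization built into $\mu^\diam$) expressible through $\mu^\diam(f^j)$. When $\mu^\diam(f)=0$ one does not literally get $f\in\gtp$ for free, so instead I would argue at the level of the unique point of the fiber: since $\gtp$ lies over $\gtq=\Specd(\pi)(\gtp)$ and the fiber is the singleton $\{\gtp\}$, and since on the dense subset $M$ the value $\mu^\diam(f)$ agrees with a strictly-positive-weight average of the $f(x)$, a fiber that is generically a single point forces $\mu^\diam(f)$ to coincide (on a dense set, hence everywhere, hence on $\gtq$) with $e\cdot f$ divided by $e$ where $e$ is the total ramification index — in other words $\mu^\diam(f)\equiv f$ along the collapsing locus. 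Thus $f=\mu^\diam(f)\circ\pi \in\varphi^\diam_\pi({\mathcal S}^\diam(N))$ modulo the ideal of the collapsing locus, and $\mu^\diam(f)=0$ gives $f\in\gtp$. Conversely, if $\ker(\mu^\diam)\subset\gtp$ but $\gtp\notin\Cc_{\Specd(\pi)}$, then the fiber over $\gtq=\Specd(\pi)(\gtp)$ contains another prime $\gtp'\neq\gtp$; using separatedness of $\Specd(\pi)$ (Proposition \ref{sep2}) I can separate $\gtp$ and $\gtp'$ by a function $f\in{\mathcal S}^\diam(M)$ with $f\in\gtp$, $f\notin\gtp'$, and then a symmetrization trick (replace $f$ by $\prod$ over the fiber, or by $f\cdot\prod_{x'\neq x}$ of suitable bump functions, to build an element of $\ker(\mu^\diam)$ not in $\gtp$) yields a contradiction. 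This separating-plus-symmetrizing construction is where I expect the main obstacle: one must produce, from a function vanishing at one point of a non-collapsed fiber but not another, an element genuinely in $\ker(\mu^\diam)$ yet outside $\gtp$, and this requires controlling $\mu^\diam$ on products and using the locally-constant-sheet-number structure of the branched covering near $\gtq$ (the partition of a neighborhood of $\gtq$ into sheets, valid away from ${\mathcal R}_{\Specd(\pi)}=\cl(\mathcal R_\pi)$, combined with a limiting argument at ramification points).

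For (ii), I would show $\Cc_{\Specd(\pi)}=\cl_{\Specd(M)}(\Cc_\pi)$ by a density argument: $\Cc_\pi=\Cc_{\Specd(\pi)}\cap M$ is immediate from the definition (a point $x\in M$ has singleton fiber under $\pi$ iff it does under $\Specd(\pi)$, since the extra points in $\Specd(\pi)^{-1}(\Specd(\pi)(x))$ would have to specialize appropriately, and one checks $M\to\Specd(M)$ reflects fiber cardinality — here one invokes that $\pi$ is a branched covering so fibers are finite and behave well). Then $\cl_{\Specd(M)}(\Cc_\pi)\subset\Cc_{\Specd(\pi)}$ because $\Cc_{\Specd(\pi)}$ is closed: it equals $V(\ker\mu^\diam)$ by part (i), a Zariski-closed subset of $\Specd(M)$. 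For the reverse inclusion $\Cc_{\Specd(\pi)}\subset\cl_{\Specd(M)}(\Cc_\pi)$, I would take $\gtp\in\Cc_{\Specd(\pi)}$ and use that $M$ is dense in $\Specd(M)$ together with the topological description of a branched covering near $\Specd(\pi)(\gtp)$: the collapsing locus downstairs, $\Cc_{\betad(\pi)}$ or its spectral analogue, is closed and its intersection with $N$ is dense in it (because a complete fiber collapse is a closed condition that, for a branched covering, already occurs on a dense semialgebraic subset whenever it occurs at a spectral point — otherwise the constant sheet number $d$ would be forced to be $1$ on a whole open set containing $M$-points, contradicting non-collapse there). Pulling this back through $\Specd(\pi)$ and intersecting with the singleton fiber gives approximation of $\gtp$ by points of $\Cc_\pi$.

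Finally, (iii) and (iv) follow from (i) and (ii) by applying the retraction ${\tt r}_M:\Specd(M)\to\betad(M)$ and the fact that $\Cc_{\betad(\pi)}=\Cc_{\Specd(\pi)}\cap\betad(M)$ (a maximal ideal collapses under $\betad(\pi)$ iff it collapses under $\Specd(\pi)$ — this uses the Gelfand property, the continuity of ${\tt r}_M,{\tt r}_N$, and the commutativity of the square relating $\Specd(\pi)$ and $\betad(\pi)$, so that the singleton-fiber condition is preserved). For (iii), intersecting $V(\ker\mu^\diam)$ with the closed points gives exactly the maximal ideals containing $\ker(\mu^\diam)$; for (iv), ${\tt r}_M$ is a continuous retraction, so it carries $\cl_{\Specd(M)}(\Cc_\pi)$ onto $\cl_{\betad(M)}(\Cc_\pi)$, and one checks this image is precisely $\Cc_{\betad(\pi)}$ using (ii) and the fiber-cardinality compatibility just mentioned. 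The genuinely delicate points are all concentrated in (i): establishing the identity $\mu^\diam(f)\circ\pi\equiv f$ on the collapsing locus (the "mean of one value is that value" principle, promoted from $M$ to $\Specd(M)$ by density), and the converse symmetrization argument near ramification points; everything else is closedness of $V(\ker\mu^\diam)$, density of $M$, and formal manipulation of the retractions.
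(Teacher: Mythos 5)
Your plan opens by saying you will lean on the already-established Theorem~\ref{bc}. That is circular: the paper explicitly states that Theorem~\ref{colapse} is proved first and \emph{does not} use Theorem~\ref{bc}, and indeed the proof of Theorem~\ref{bc} (paragraph~\ref{closureofB}) invokes Theorem~\ref{colapse}. Any argument that uses Theorem~\ref{bc} here is inadmissible, so you cannot rely on ${\mathcal R}_{\Specd(\pi)}=\cl(\mathcal R_\pi)$ or on $\Specd(\pi)$ being a branched covering; you have only Proposition~\ref{sep2} (finite quasi-covering, integrality, behavior of $\betad$) and the structure of $\mu^\diam$.

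For part~(i), both of your directions have real gaps. In the direction ``$\ker(\mu^\diam)\subset\gtp$ implies $\gtp\in\Cc_{\Specd(\pi)}$'' you propose a ``symmetrization trick'' to produce an element of $\ker(\mu^\diam)$ outside $\gtp$, and you yourself flag it as the main obstacle; you never resolve it. The paper's argument is different and much cleaner: for a second prime $\gtp_1$ over $\gtq$, take a nonnegative $f\in\gtp$, use $f-\mu^\diam(f)\circ\pi\in\ker(\mu^\diam)\subset\gtp$ and the module identity $\mu^\diam(g\circ\pi)=g$ to get $\mu^\diam(f)\circ\pi\in\gtp\cap\gtp_1$, then exploit the inequality $0\le f\le d\cdot(\mu^\diam(f)\circ\pi)$ and Lemma~\ref{suma}(i) (a convexity/real-closed-ring fact) to force $f\in\gtp_1$, hence $\gtp\subset\gtp_1$, contradicting separatedness. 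In the hard direction ``$\gtp\in\Cc_{\Specd(\pi)}$ implies $\ker(\mu^\diam)\subset\gtp$'' your phrase ``\,$f=\mu^\diam(f)\circ\pi$ modulo the ideal of the collapsing locus, and $\mu^\diam(f)=0$ gives $f\in\gtp$\,'' is a non sequitur: $\gtp$ is not the ideal of the collapsing locus, and ``promoting by density'' has no direct meaning for an arbitrary prime of ${\mathcal S}^\diam(M)$. What makes this direction work in the paper is the $z$-ideal machinery: one first reduces the ${\mathcal S}^*$-case to the ${\mathcal S}$-case via the lattice of prime ideals (a nontrivial argument in paragraphs~\ref{mec}--\ref{mec2}), then uses Corollary~\ref{minz} to know that a \emph{minimal} element $\gtp_0\in\Cc_{\Specs(\pi)}$ below $\gtp$ is a prime $z$-ideal, splits $f=(|f|+f)/2-(|f|-f)/2$, uses the semialgebraic sup-function $g_1(y)=\sup_{\pi^{-1}(y)}h_1$ and the identity $\Specs(\pi)(\Dd(h_1))=\Dd(g_1)$ from~\cite{fg3}, and closes with a sign argument showing $Z(g_1\circ\pi)\subset Z(f)$. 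None of this appears in your sketch, and I do not see a way to complete it without the $z$-ideal property.

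For part~(ii), the inclusion $\cl_{\Specd(M)}(\Cc_\pi)\subset\Cc_{\Specd(\pi)}$ is indeed immediate from closedness (which you get from~(i)), but the converse is again a genuine piece of work: the paper reduces to the ${\mathcal S}$-case, fixes a minimal $z$-ideal $\gtp_0$, fixes $f\in\gtP_0$ of minimal $\dim Z(f)$, applies Hardt's trivialization to $\pi|_{\pi^{-1}(Z(f))}$ to find a locally compact stratum $A_1$ over which $\pi$ is a product, and uses Lemma~\ref{homeo:fe_FG} twice to transport the product structure to the spectra and manufacture a second prime in the fiber of $\gtP_0$ whenever $\#(P_1)\ge 2$. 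Your proposed argument (``otherwise the constant sheet number $d$ would be forced to be $1$ on a whole open set\ldots'') is heuristically in the right spirit but does not engage with the actual difficulty, which is converting a spectral collapse at $\gtp_0$ into a semialgebraic collapse over a nearby stratum of positive dimension. Parts~(iii) and~(iv) you handle correctly: as in the paper they follow from $\Cc_{\betad(\pi)}=\Cc_{\Specd(\pi)}\cap\betad(M)$, a consequence of Proposition~\ref{sep2}(iii)--(v), though the retraction ${\tt r}_M$ is not really needed once that identity is in hand.
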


Semialgebraic branched coverings were implicitly introduced by Brumfiel in \cite{br}. Given a semialgebraic set $M\subset\R^m$ and a closed equivalence relation $E\subset M\times M$ such that the projection $\Pi:E\to M$ is proper, it was proved in \cite[Thm. 1.4]{br} the existence of a semialgebraic set $N\subset\R^m$, a surjective semialgebraic map $\pi:M\to N$ and a homeomorphism $g:M/E\to N$ such that $\pi=g\circ\rho$, where $\rho:M\to M/E$ is the natural projection. Scheiderer studied more general quotients in \cite{sch} and again semialgebraic branched coverings appear. We present an enlightening related example at the end of this paper (the Bezoutian covering). Another source of examples are algebraic morphisms between complex algebraic curves \cite{bcg1, bcg2} (after considering their real intrinsic structures).

\noindent{\bf Differences between the ${\mathcal S}$ and ${\mathcal S}^*$ cases.} 
In this article we have tried to unify the proofs of the results for the rings ${\mathcal S}(M)$ and ${\mathcal S}^*(M)$ whenever it has been possible. For some results certain differences appear because of the particularities of both settings. In the $\mathcal{S}$-case we have the machinery of $z$-ideals, which is not available in the ${\mathcal S}^*$-case (see Subsection \ref{subsecAddition} and Corollary \ref{minz}). For instance, the proof of Theorem \ref{colapse} is done in the $\mathcal{S}$-case using $z$-ideals and we show how one reduces the proof of Theorem \ref{colapse} in the $\mathcal{S}^*$-case to the preceding one.

\section{Branched coverings}\label{s5}
We begin this section with some general topological facts. For each subset $A$ of a topological space $X$ we denote $\cl_X(A)$ and $\Int_X(A)$ the closure and the interior of $A$ in $X$. In addition, $\#(A)$ denotes the cardinality of $A$. The following results are straightforward, but very useful for our discussion below.

\begin{lem}\label{trivial}
Let $\pi:X\to Y$ be a surjective map and let $Z\subset Y$. Denote $T:=\pi^{-1}(Z)$. Then for each set $A\subset X$, we have $\pi(A\cap T)=\pi(A)\cap\pi(T)$. In addition,
\begin{itemize}
\item[(i)] If $\pi$ is open, $\pi|_T:T\to Z$ is open.
\item[(ii)] If $\pi$ is closed, $\pi|_T:T\to Z$ is closed.
\end{itemize}
\end{lem}

\begin{lem}\label{opcl}
Let $\pi:X\to Y$ be a continuous map and let $A\subset X$ and $B\subset Y$. Then
\begin{itemize}
\item[(i)] If $\pi$ is open, $\cl_X(\pi^{-1}(B))=\pi^{-1}(\cl_Y(B))$.
\item[(ii)] If $\pi$ is closed, $\pi(\cl_X(A))=\cl_Y(\pi(A))$.
\end{itemize}
\end{lem}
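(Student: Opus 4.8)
The plan is to establish each of the two set equalities by a double inclusion, observing that in both items one inclusion holds for \emph{any} continuous map while the reverse inclusion is precisely where the extra hypothesis (openness in (i), closedness in (ii)) is used.

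For (i), I would first use continuity alone: $\pi^{-1}(\cl_Y(B))$ is a closed subset of $X$ containing $\pi^{-1}(B)$, hence $\cl_X(\pi^{-1}(B))\subseteq\pi^{-1}(\cl_Y(B))$. For the reverse inclusion, fix $x\in X$ with $\pi(x)\in\cl_Y(B)$ and let $U$ be an arbitrary open neighborhood of $x$ in $X$. Since $\pi$ is open, $\pi(U)$ is an open neighborhood of $\pi(x)$, so $\pi(U)\cap B\neq\emptyset$; choosing $y$ in this intersection and $x'\in U$ with $\pi(x')=y$ yields $x'\in U\cap\pi^{-1}(B)$. As $U$ was arbitrary, $x\in\cl_X(\pi^{-1}(B))$, which gives the equality.

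For (ii), continuity again provides one inclusion: $\pi^{-1}(\cl_Y(\pi(A)))$ is closed and contains $A$, hence contains $\cl_X(A)$, so that $\pi(\cl_X(A))\subseteq\cl_Y(\pi(A))$. For the reverse inclusion I would invoke closedness of $\pi$: then $\pi(\cl_X(A))$ is a closed subset of $Y$ containing $\pi(A)$, and therefore it contains $\cl_Y(\pi(A))$. Combining the two inclusions yields the stated equality.

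These are standard point-set topology facts, so I do not expect any genuine obstacle; the only point requiring a little care is to pair each hypothesis with the correct inclusion, namely that openness governs the passage from a neighborhood of $x$ to a neighborhood of $\pi(x)$ in (i), while closedness is exactly what guarantees that the image of a closed set is closed in (ii).
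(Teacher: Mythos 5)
Your proof is correct and complete; the paper states this lemma without proof as a ``straightforward'' topological fact, and your double-inclusion argument (continuity for one direction, openness/closedness for the other) is precisely the standard argument it tacitly assumes.
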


In this article we study certain classes of coverings in two different topological contexts: the semialgebraic and the spectral one. In order to analyze both contexts simultaneously, we work in a general enough topological setting. This has forced us to be careful when writing down the proofs. Of course, many of the them can be shortened and simplified if we add additional hypothesis to the topological spaces (for instance, if they are subspaces of affine spaces or if the spaces are locally connected).

\subsection{Finite quasi-coverings}\label{quasi}
A continuous map $\pi:X\to Y$ is \em separated \em if each pair of points in the same fiber admits disjoint neighborhoods in $X$. A \em finite quasi-covering \em is a separated, open and closed surjective map $\pi:X\to Y$ whose fibers are finite. 

\begin{remark}\label{fqc}
Let $\pi:X\to Y$ be a finite quasi-covering and let $Z\subset Y$. Denote $T:=\pi^{-1}(Z)$. Then $\pi|_{T}:T\to Z$ is a finite quasi-covering by Lemma \ref{trivial}.
\end{remark}

We define next some special neighborhoods related to the points of the spaces that appear in a finite quasi-covering.

\begin{lem}[Characteristic and distinguished neighborhood]\label{disting} 
Let $\pi:X\to Y$ be a finite quasi-covering and let $y\in Y$ be such that its fiber $\pi^{-1}(y):=\{x_1,\dots,x_r\}$ has $r$ distinct points. Let $W^{x_1},\dots,W^{x_r}\subset X$ be pairwise disjoint open neighborhoods of $x_1,\ldots,x_r$. Then there exists an open neighborhood $V_0\subset Y$ of $y$ such that for each open neighborhood $V\subset V_0$ of $y$ there exist pairwise disjoint open neighborhoods $U^{x_1},\dots,U^{x_r}\subset X$ of the points $x_1,\dots,x_r$ satisfying $U^{x_i}\subset W^{x_i}$,
\begin{equation}\label{cubierta}
\pi^{-1}(V)=\bigsqcup_{j=1}^rU^{x_j}\quad\text{and}\quad\pi(U^{x_j})=V\quad\text{for}\quad 1\leq j\leq r. 
\end{equation}
In addition, $\#(\pi^{-1}(z))\geq r$ for each $z\in V$. \em We say that $V$ is a \em distinguished neighborhood of $y$ \em (with respect to $\pi$) and $U^{x_1},\ldots, U^{x_r}$ is a family of \em characteristic neighborhoods \em (with respect to $V$). We say that $U$ is a \em characteristic neighborhood \em of $x\in X$ if $U$ is a member of a family of characteristic neighborhoods with respect to a distinguished open neighborhood of $\pi(x)$.
\end{lem}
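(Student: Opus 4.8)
The plan is to build the distinguished neighborhood $V_0$ by exploiting the four defining properties of a finite quasi-covering one at a time. First, since $\pi$ is separated and $\pi^{-1}(y)=\{x_1,\dots,x_r\}$ is finite, I may shrink the given $W^{x_i}$ to pairwise disjoint open neighborhoods (they are already assumed pairwise disjoint, so nothing is needed here). The key point is that the ``spill'' outside $\bigcup_i W^{x_i}$ must be controlled: since $\pi$ is a closed map and $C:=X\setminus\bigcup_{i=1}^r W^{x_i}$ is closed with $y\notin\pi(C)$, the set $V_0':=Y\setminus\pi(C)$ is an open neighborhood of $y$, and by construction $\pi^{-1}(V_0')\subset\bigcup_{i=1}^r W^{x_i}$. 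So over $V_0'$ the preimage is already trapped inside the union of the disjoint $W^{x_i}$.

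Next I use openness to arrange surjectivity of each sheet onto $V$. For each $i$, $\pi(W^{x_i})$ is an open neighborhood of $y$; set $V_0:=V_0'\cap\bigcap_{i=1}^r\pi(W^{x_i})$, an open neighborhood of $y$. Now fix any open $V\subset V_0$ with $y\in V$ and put $U^{x_i}:=W^{x_i}\cap\pi^{-1}(V)$. These are open, pairwise disjoint (inherited from the $W^{x_i}$), contained in $W^{x_i}$, and contain $x_i$ (since $\pi(x_i)=y\in V$). Because $\pi^{-1}(V)\subset\pi^{-1}(V_0')\subset\bigcup_i W^{x_i}$, we get $\pi^{-1}(V)=\bigsqcup_{i=1}^r U^{x_i}$, which is the first identity in \eqref{cubierta}. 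For the second, $\pi(U^{x_i})\subset V$ is clear; conversely, given $z\in V$, pick $x\in\pi^{-1}(z)$ — it lies in some $U^{x_j}$, but I need it in $U^{x_i}$ for the prescribed $i$. This is the one point requiring a little more care: surjectivity of $\pi|_{U^{x_i}}$ onto $V$. Here I invoke that $\pi|_{W^{x_i}}$ is an \emph{open} map (restriction of an open map to an open set) and its image $\pi(W^{x_i})\supset V_0\supset V$; but openness alone does not give surjectivity onto $V$. The correct argument: $\pi|_{U^{x_i}}:U^{x_i}\to V$ is open (Lemma \ref{trivial}(i) applied to $\pi|_{W^{x_i}}$, or directly), and I claim its image is also closed in $V$. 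Indeed, $\pi|_{W^{x_i}}$ is the restriction of the closed map $\pi$ to the closed-in-$\pi^{-1}(V_0')$ set $W^{x_i}\cap\pi^{-1}(V_0')$ (closed there because the $W^{x_j}$ partition $\pi^{-1}(V_0')$ into open, hence also closed, pieces), so $\pi|_{U^{x_i}}$ is closed onto $V$ as well. A nonempty subset of the — not necessarily connected — space $V$ that is both open and closed need not be all of $V$, so I must instead argue fiberwise: for $z\in V$, the fiber $\pi^{-1}(z)\subset\bigsqcup_j U^{x_j}$ is nonempty; I want it to meet \emph{every} $U^{x_i}$. This will follow once I know $\pi(U^{x_i})=V$ for all $i$, which is precisely the claim — so this needs an independent input. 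The clean way is: $\pi|_{U^{x_i}}$ is an open and closed (hence clopen-image) map with nonempty image containing $y$; running the same clopen argument on $V$ replaced by an arbitrary small connected-enough piece does not work in general topological spaces, so instead I use that $\pi$ closed $\Rightarrow$ $\pi|_{W^{x_i}\cap\pi^{-1}(V_0')}$ closed, and $x_i\in U^{x_i}$ with $\pi(x_i)=y$; then for the identity $\pi(U^{x_i})=V$ I restrict attention to $V$ and note $\pi^{-1}(V)\cap W^{x_i}=U^{x_i}$, so $\pi|_{U^{x_i}}\colon U^{x_i}\to V$ is surjective because it is both open and closed and $V$ — wait, this is where I genuinely need: a surjective open \emph{and} closed map onto $V$. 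Since $\pi|_{\pi^{-1}(V)}\colon\pi^{-1}(V)\to V$ is surjective, open, closed with finite fibers (Remark \ref{fqc}), and $\pi^{-1}(V)=\bigsqcup_j U^{x_j}$ is a finite disjoint union of open sets, each $\pi(U^{x_j})$ is open; and $\bigcup_j\pi(U^{x_j})=V$. Moreover each $\pi(U^{x_j})$ is closed in $V$ (image of the closed subset $U^{x_j}$ of $\pi^{-1}(V)$ under the closed map $\pi|_{\pi^{-1}(V)}$). Thus $\{\pi(U^{x_j})\}_{j=1}^r$ is a finite clopen cover of $V$, each containing $y$; intersecting, $\bigcap_j\pi(U^{x_j})$ is a clopen neighborhood of $y$ in $V$, but I still cannot conclude it equals $V$ without connectedness. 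Therefore the honest statement is that \eqref{cubierta} holds after possibly shrinking $V$ so that it is contained in this clopen set — but the lemma demands it for \emph{every} $V\subset V_0$.

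Reconciling this forces the resolution: I should define $U^{x_i}$ not as $W^{x_i}\cap\pi^{-1}(V)$ but must additionally guarantee surjectivity, which for a general (not locally connected) base is obtained by shrinking $W^{x_i}$ at the outset so that $\pi(W^{x_i})$ is contained in a single suitable clopen neighborhood — however the key realization is that \emph{$V_0$ itself can be chosen so small that $\pi(U^{x_i})=V$ automatically}: replace $V_0$ by $V_0\cap\bigcap_{i}\pi(W^{x_i})$ as above and observe that for $z\in V_0$, since $z\in\pi(W^{x_i})$, there is $x\in W^{x_i}$ with $\pi(x)=z$, and then $x\in W^{x_i}\cap\pi^{-1}(V_0')\subset\bigsqcup_j U^{x_j}$ restricted appropriately forces $x\in U^{x_i}$ because the $W^{x_j}$ are pairwise disjoint and $x\in W^{x_i}$. \emph{This is the crux and it works}: the disjointness of the $W^{x_j}$, combined with $\pi^{-1}(V)\subset\bigcup_j W^{x_j}$, means membership of a point $x$ in $W^{x_i}$ pins down which sheet it is on. Hence for $z\in V\subset V_0\subset\pi(W^{x_i})$, pulling back $z$ into $W^{x_i}$ lands it in $U^{x_i}$, giving $\pi(U^{x_i})=V$. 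Finally, the inequality $\#(\pi^{-1}(z))\geq r$ for $z\in V$ is immediate: each $U^{x_j}$ surjects onto $V$ and the $U^{x_j}$ are pairwise disjoint, so $\pi^{-1}(z)$ meets each of the $r$ disjoint sets $U^{x_1},\dots,U^{x_r}$ in at least one point. The main obstacle, as the discussion shows, is \emph{not} any deep fact but the temptation to use connectedness of $V$; the right move is to use the \emph{disjointness} of the chosen neighborhoods $W^{x_j}$ together with the closedness of $\pi$ (to trap $\pi^{-1}(V_0')$ inside $\bigcup_j W^{x_j}$) and the openness of $\pi$ (to force $V_0\subset\pi(W^{x_j})$ for all $j$), after which surjectivity of each sheet is a bookkeeping consequence rather than a topological one.
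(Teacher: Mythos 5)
Your proposal is correct and ultimately uses the same construction as the paper: $V_0:=\bigl(Y\setminus\pi(X\setminus\bigcup_j W^{x_j})\bigr)\cap\bigcap_j\pi(W^{x_j})$ and $U^{x_j}:=W^{x_j}\cap\pi^{-1}(V)$, with the surjectivity $\pi(U^{x_j})=V$ following because $V\subset\pi(W^{x_j})$ pulls every $z\in V$ back into $W^{x_j}$, hence into $U^{x_j}$ since $\pi(x)=z\in V$. The route you take to get there is unnecessarily circuitous — the clopen/connectedness detours in the middle are dead ends that you correctly abandon, and your final ``disjointness pins down the sheet'' observation is exactly the one-line argument the paper expects the reader to supply.
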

\begin{proof}
The existence of pairwise disjoint open neighborhoods $W^{x_1},\dots,W^{x_r}\subset X$ of the points $x_1,\ldots,x_r$ is guaranteed because $\pi$ is a separated map. As $\pi$ is an open and closed map,
$$
V_0:=\Big(Y\setminus\pi\Big(X\setminus\bigcup_{j=1}^rW^{x_j}\Big)\Big)\cap\bigcap_{j=1}^r\pi(W^{x_j})\subset Y
$$
is an open neighborhood of $y$ and $\pi^{-1}(V_0)\subset\bigcup_{j=1}^rW^{x_j}$. Let $V\subset V_0$ be an open neighborhood of $y$. Define $U^{x_j}:=W^{x_j}\cap\pi^{-1}(V)\subset W^{x_j}\subset X$, which is an open neighborhood of $x_j$. Then $U^{x_i}\cap U^{x_j}=\varnothing$ if $i\neq j$ and the reader can check that equalities \eqref{cubierta} follow. Once this is checked, the last part of the statement is clear.
\end{proof}

\begin{remark}\label{intersection}
Let $\pi:X\to Y$ be a finite quasi-covering and let $y\in Y$. Let $\pi^{-1}(y):=\{x_1,\ldots,x_r\}$ and let $W_1,\ldots,W_k$ be open neighborhoods of $x_1,\ldots,x_k$ for some $k\leq r$. Suppose that $V$ is a distinguished neighborhood of $y$ and let $U_1,\ldots,U_r$ be characteristic neighborhoods of $x_1,\dots,x_r$ with respect to $V$. Then there exist a distinguished open neighborhood $\widetilde{V}\subset V$ of $y$ and characteristic neighborhoods $\widetilde{U}_i$ with respect to $\widetilde{V}$ (for $i=1,\ldots,r$) satisfying:
\begin{itemize}
\item $\widetilde{U}_i=\pi^{-1}(\widetilde{V})\cap U_i\cap W_i\subset U_i\cap W_i$ for $i=1,\ldots,k$
\item $\widetilde{U}_i=\pi^{-1}(\widetilde{V})\cap U_i\subset U_i$ for $i=k+1,\ldots,r$.
\end{itemize}

Indeed, it is enough to apply Lemma \ref{disting} to the family $U_1\cap W_1,\ldots, U_k\cap W_k,U_{k+1},\ldots, U_r$. Note that for each $z\in \widetilde{V}$, we have $\#(\pi^{-1}(z)\cap \widetilde{U}_i)=\#(\pi^{-1}(z)\cap U_i)$.\qed
\end{remark}

The reader can check that if a distinguished neighborhood $V$ is connected, then the family of characteristic neighborhoods with respect to $V$ coincides with the family of connected components of the inverse image of $V$ under the finite quasi-covering.

\begin{cor}\label{ccs0}
Let $\pi:X\to Y$ be a finite quasi-covering and let $y\in Y$. Write $\pi^{-1}(y)=\{x_1,\ldots,x_r\}$. Let $V$ be a distinguished neighborhood of $y$ and let $U^{x_1},\ldots,U^{x_r}$ be a family of characteristic neighborhoods with respect to $V$. Assume that $V$ is connected. Then $U^{x_1},\ldots,U^{x_r}$ are the connected components of $\pi^{-1}(V)$.
\end{cor}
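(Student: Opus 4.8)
The plan is to deduce this corollary directly from Lemma \ref{disting} together with the general principle that in a finite quasi-covering a connected distinguished neighborhood splits its preimage into exactly the characteristic neighborhoods. First I would record the key properties supplied by Lemma \ref{disting}: since $V$ is a distinguished neighborhood of $y$, there are pairwise disjoint open neighborhoods $U^{x_1},\dots,U^{x_r}$ of $x_1,\dots,x_r$ with $\pi^{-1}(V)=\bigsqcup_{j=1}^r U^{x_j}$ and $\pi(U^{x_j})=V$ for every $j$. In particular each $U^{x_j}$ is nonempty, open, and (being a union of members of the disjoint open family) also closed in $\pi^{-1}(V)$; thus $\pi^{-1}(V)$ is the disjoint union of the $r$ nonempty clopen sets $U^{x_j}$.

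Next I would use connectedness of $V$ to show each $U^{x_j}$ is connected. Consider the restriction $\pi|_{U^{x_j}}:U^{x_j}\to V$. By Remark \ref{fqc} (applied with $Z=V$) the map $\pi|_{\pi^{-1}(V)}:\pi^{-1}(V)\to V$ is again a finite quasi-covering, and since $U^{x_j}$ is a clopen union of fibers' pieces, $\pi|_{U^{x_j}}:U^{x_j}\to V$ is also open, closed, surjective (as $\pi(U^{x_j})=V$) with finite fibers; being the restriction of a separated map to an open subset it is separated as well, so it is a finite quasi-covering. Now if $U^{x_j}=A\sqcup B$ with $A,B$ nonempty clopen in $U^{x_j}$, then $\pi(A)$ and $\pi(B)$ are clopen in $V$ (open because $\pi|_{U^{x_j}}$ is open, closed because it is closed) and nonempty, and their union is $V$; connectedness of $V$ forces $\pi(A)=\pi(B)=V$. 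But then, choosing the index $j$ so that $x_j\in U^{x_j}$ lies in, say, $A$, we would need a point of $B$ over $y$ inside $U^{x_j}$, contradicting $\pi^{-1}(y)\cap U^{x_j}=\{x_j\}$ (which follows from the disjointness of the $U^{x_i}$ and $x_i\in U^{x_i}$). Hence $U^{x_j}$ is connected.

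Finally I would conclude: $\pi^{-1}(V)$ is a disjoint union of the connected open sets $U^{x_1},\dots,U^{x_r}$, each of which is also closed in $\pi^{-1}(V)$; therefore they are precisely the connected components of $\pi^{-1}(V)$. The one genuinely non-formal point — the main obstacle — is the argument that each $U^{x_j}$ is connected; everything else is bookkeeping about clopen sets. The subtlety there is that a priori $U^{x_j}$ could have a decomposition whose pieces both surject onto $V$, and one must rule this out using the fiber over $y$ being a single point in $U^{x_j}$; this is exactly where the "distinguished/characteristic" structure from Lemma \ref{disting}, rather than mere openness and closedness, is used.
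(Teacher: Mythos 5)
Your proof is correct and follows essentially the same route as the paper's: establish that each $U^{x_j}$ is clopen, assume a clopen splitting, use openness and closedness of the restricted map together with connectedness of $V$ to force both pieces to surject onto $V$, and then contradict $\pi^{-1}(y)\cap U^{x_j}=\{x_j\}$. The only cosmetic difference is that you dress up $\pi|_{U^{x_j}}$ as a finite quasi-covering, whereas the paper simply notes that $C_1,C_2$ are clopen in $\pi^{-1}(V)$ and applies openness/closedness of $\pi|_{\pi^{-1}(V)}$ directly; the substance is identical.
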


\begin{defn}
If $\pi:X\to Y$ is a finite quasi-covering, the \em branching locus \em of $\pi$ is the closed set $\mathcal{B}_\pi$ of all points belonging to $X$ at which $\pi$ is not a local homeomorphism. The \em ramification set \em of $\pi$ is the closed set ${\mathcal R}_{\pi}:=\pi({\mathcal B}_{\pi})\subset Y$. The \em regular locus \em of $\pi$ is the open set 
$$
X_{\reg}:=X\setminus\pi^{-1}({\mathcal R}_{\pi})\subset X.
$$
We say that $\pi:X\to Y$ is a \em $d$-unbranched covering \em (for some integer $d\geq1$) if $X_{\reg}=X$ and the cardinality of each fiber is equal to $d$. In case we do not want to specify the integer $d$, we will say that $\pi$ is an \em unbranched covering\em. It is important to keep in mind that the fibers of an unbranched covering have constant cardinality (see Examples \ref{notbranched}).
\end{defn}

\begin{lem}\label{genbranch} 
Let $\pi:X\to Y$ be a finite quasi-covering. Then $y\in Y\setminus{\mathcal R}_{\pi}$ if and only if there exists an open neighborhood $W\subset Y$ of $y$ such that the cardinality of the fiber $\pi^{-1}(z)$ for each $z\in W$ is constant.
\end{lem}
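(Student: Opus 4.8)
The plan is to prove both implications directly from the definitions, using the distinguished/characteristic neighborhoods of Lemma \ref{disting} as the main engine. For the implication ``constant fiber cardinality near $y$ $\Rightarrow$ $y\notin{\mathcal R}_\pi$'': suppose $W$ is an open neighborhood of $y$ on which $z\mapsto\#(\pi^{-1}(z))$ is constant, say equal to $d$. Write $\pi^{-1}(y)=\{x_1,\dots,x_d\}$ (it has exactly $d$ points). Apply Lemma \ref{disting} to get a distinguished neighborhood $V\subset W$ of $y$ together with pairwise disjoint characteristic neighborhoods $U^{x_1},\dots,U^{x_d}$ with $\pi^{-1}(V)=\bigsqcup_j U^{x_j}$ and $\pi(U^{x_j})=V$. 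Each restriction $\pi|_{U^{x_j}}:U^{x_j}\to V$ is again a finite quasi-covering (openness, closedness, surjectivity are inherited, and separatedness passes to the restriction; fibers are finite). I claim $\pi|_{U^{x_j}}$ has all fibers singletons: for $z\in V$ the fiber $\pi^{-1}(z)$ has exactly $d$ points distributed among the $d$ disjoint sets $U^{x_1},\dots,U^{x_d}$, and since by Lemma \ref{disting} each $U^{x_i}$ meets $\pi^{-1}(z)$ (because $\pi(U^{x_i})=V\ni z$) in at least one point, a pigeonhole/counting argument forces exactly one point in each. Hence $\pi|_{U^{x_j}}:U^{x_j}\to V$ is a continuous open bijection, i.e. a homeomorphism, so $\pi$ is a local homeomorphism at each $x_j$; thus $x_j\notin{\mathcal B}_\pi$ for all $j$, which means $\pi^{-1}(y)\cap{\mathcal B}_\pi=\varnothing$, i.e. $y\notin\pi({\mathcal B}_\pi)={\mathcal R}_\pi$.

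For the converse, suppose $y\notin{\mathcal R}_\pi$, so no point of $\pi^{-1}(y)=\{x_1,\dots,x_r\}$ lies in ${\mathcal B}_\pi$; that is, $\pi$ is a local homeomorphism at each $x_i$, so there are open neighborhoods $W^{x_i}$ of $x_i$ with $\pi|_{W^{x_i}}:W^{x_i}\to\pi(W^{x_i})$ a homeomorphism onto an open set. Shrinking if necessary we may take the $W^{x_i}$ pairwise disjoint. Apply Lemma \ref{disting} with these $W^{x_i}$ to obtain a distinguished neighborhood $V$ of $y$ (inside $\bigcap_i\pi(W^{x_i})$) and characteristic neighborhoods $U^{x_i}\subset W^{x_i}$ with $\pi^{-1}(V)=\bigsqcup_{i=1}^r U^{x_i}$ and $\pi(U^{x_i})=V$. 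Since $U^{x_i}\subset W^{x_i}$ and $\pi|_{W^{x_i}}$ is injective, $\pi|_{U^{x_i}}:U^{x_i}\to V$ is an injective continuous surjection, hence a bijection. Therefore for every $z\in V$ the fiber $\pi^{-1}(z)=\bigsqcup_{i=1}^r(\pi^{-1}(z)\cap U^{x_i})$ has exactly one point in each $U^{x_i}$, so $\#(\pi^{-1}(z))=r$ is constant on $W:=V$.

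The only mildly delicate point — and the place I would be most careful — is the counting step in the first implication: one must make sure that each characteristic neighborhood $U^{x_i}$ really meets every fiber $\pi^{-1}(z)$ with $z\in V$ in \emph{at least} one point, which is exactly the content of the equalities $\pi(U^{x_i})=V$ in \eqref{cubierta} (equivalently the ``$\#(\pi^{-1}(z))\geq r$'' clause of Lemma \ref{disting}), and then combine this lower bound with the global upper bound $\#(\pi^{-1}(z))\le d$ coming from the hypothesis to force equality in each slot. Everything else is a routine assembly of the quoted facts: inheritance of the finite-quasi-covering property under restriction to saturated open sets (Remark \ref{fqc}), and the standard fact that a continuous open bijection is a homeomorphism. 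I do not anticipate any genuine obstacle beyond bookkeeping.
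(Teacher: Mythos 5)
Your proof is correct and follows essentially the same route as the paper: both directions are handled via Lemma \ref{disting} (the paper packages the shrinking step through Remark \ref{intersection}, but that is the same tool), the key step in each direction being the pigeonhole count that forces each characteristic neighborhood to meet every fiber in exactly one point, so that the restrictions $\pi|_{U^{x_i}}$ are continuous open bijections and hence homeomorphisms. Nothing essential differs.
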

\begin{proof}Let $y\in Y$ and denote $\pi^{-1}(y):=\{x_1,\ldots,x_r\}$. If $y\in Y\setminus{\mathcal R}_{\pi}$ there exists an open neighborhood $U_i$ of $x_i$ such that $\pi|_{U_i}:U_i\to \pi(U_i)$ is a homeomorphism for each $i=1,\ldots,r$. By Remark \ref{intersection} we can assume that $U_1,\ldots,U_r$ is a family of characteristic neighborhoods with respect to their common image $W$. Thus, the cardinality of $\pi^{-1}(z)$ is constant for $z\in W$.

Conversely, if there exists a neighborhood $W\subset Y$ of $y$ such that the cardinality of each fiber $\pi^{-1}(z)$ for $z\in W$ is constant, after shrinking $W$ if necessary we may assume by Remark \ref{intersection} that there exists a family of characteristic neighborhoods $U^{x_1},\ldots,U^{x_r}$ with respect to $W$. Now, each restriction $\pi|_{U^{x_i}}:U^{x_i}\to W$ must be injective. By Remark \ref{fqc} the surjective map $\pi|_{U^{x_i}}$ is open and closed and therefore it is a homeomorphism, as required. 
\end{proof}

\begin{cor}\label{max}
Let $\pi:X\to Y$ be a finite quasi-covering and suppose that $d:=\sup\{\#(\pi^{-1}(y)):\ y\in Y\}<+\infty$. Let $y\in Y$ be such that $\#(\pi^{-1}(y))=d$. Then $y\in Y\setminus{\mathcal R}_{\pi}$. 
\end{cor}
\begin{proof}
Denote $\pi^{-1}(y):=\{x_1,\ldots,x_d\}$. Let $V$ be a distinguished neighborhood of $y$ with respect to $\pi$. By Lemma \ref{disting} $d=\#(\pi^{-1}(y))\leq\#(\pi^{-1}(z))\leq d$ for each $z\in V$, so the cardinality of the fiber $\pi^{-1}(z)$ for each $z\in V$ is constant. By Lemma \ref{genbranch} $y\in Y\setminus{\mathcal R}_{\pi}$, as required.
\end{proof}

We finish this part with a topological property of certain finite quasi-coverings that will be used in the proof of Theorem \ref{bc}.

\begin{lem}\label{nowhere}
Let $\pi:X\to Y$ be a finite quasi-covering such that $X_{\reg}$ is dense in $X$. If $Z$ is a closed nowhere dense subset of $X$ then $\pi(Z)$ is a closed nowhere dense subset of $Y$. 
\end{lem}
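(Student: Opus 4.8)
The plan is to prove that $\pi(Z)$ is closed and has empty interior. Closedness is immediate: $\pi$ is a closed map, so $\pi(Z)$ is closed in $Y$. The substance is to show $\Int_Y(\pi(Z))=\varnothing$. So suppose, for contradiction, that there is a nonempty open set $V\subset\pi(Z)$. Since $X_{\reg}$ is dense in $X$ and $\pi$ is open, $\pi(X_{\reg})$ is dense in $Y$ (indeed $\pi(X_{\reg})\subset Y\setminus{\mathcal R}_\pi$ and we can also argue directly: $\pi^{-1}(V)$ is a nonempty open set, hence meets the dense set $X_{\reg}$). Pick $y\in V\cap\pi(X_{\reg})$; then there is $x_0\in\pi^{-1}(y)$ with $x_0\notin\pi^{-1}({\mathcal R}_\pi)$, so in particular $y\notin{\mathcal R}_\pi$, and every point of $\pi^{-1}(y)$ lies in $X_{\reg}$.

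Next I would use Lemma~\ref{genbranch} (or directly the local-homeomorphism property at the points of $\pi^{-1}(y)$) together with Lemma~\ref{disting}: write $\pi^{-1}(y)=\{x_1,\dots,x_r\}$, and since each $x_i\in X_{\reg}$ choose open neighborhoods $W^{x_i}$ on which $\pi$ restricts to a homeomorphism onto its image. Shrinking via Lemma~\ref{disting} (and Remark~\ref{intersection}) we obtain a distinguished connected-free but fiberwise-controlled neighborhood $V'\subset V$ of $y$ with characteristic neighborhoods $U^{x_1},\dots,U^{x_r}$ such that $\pi^{-1}(V')=\bigsqcup_{i=1}^r U^{x_i}$, $\pi(U^{x_i})=V'$, $U^{x_i}\subset W^{x_i}$, and each $\pi|_{U^{x_i}}:U^{x_i}\to V'$ is a homeomorphism. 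Now $Z$ is nowhere dense in $X$, hence $Z\cap U^{x_i}$ is nowhere dense in $U^{x_i}$ for each $i$; transporting by the homeomorphism $\pi|_{U^{x_i}}$, the set $\pi(Z\cap U^{x_i})$ is nowhere dense in $V'$. Therefore $\bigcup_{i=1}^r\pi(Z\cap U^{x_i})$ is a finite union of nowhere dense subsets of $V'$, so it is not all of $V'$: there exists $z\in V'$ with $z\notin\pi(Z\cap U^{x_i})$ for every $i$.

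Finally I would derive the contradiction. Since $\pi^{-1}(V')=\bigsqcup_i U^{x_i}$, any point of $Z$ mapping to $z$ would lie in some $U^{x_i}$ and hence $z\in\pi(Z\cap U^{x_i})$, contrary to the choice of $z$. Thus $z\notin\pi(Z)$. But $z\in V'\subset V\subset\pi(Z)$, a contradiction. Hence $\Int_Y(\pi(Z))=\varnothing$ and $\pi(Z)$ is closed nowhere dense, as required.

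I expect the only real care-point to be the bookkeeping with distinguished and characteristic neighborhoods: one must make sure, via Lemma~\ref{disting} and Remark~\ref{intersection}, that $V'$ can be taken inside the given open set $V$ while simultaneously arranging that $\pi$ is a homeomorphism on each $U^{x_i}$ (which is exactly where $y\notin{\mathcal R}_\pi$ is used), so that nowhere-density can be pushed forward along these local homeomorphisms. Everything else — closedness of $\pi(Z)$, the finite union of nowhere dense sets not covering an open set, the partition of $\pi^{-1}(V')$ — is routine. No local connectedness of $X$ or $Y$ is needed for this argument.
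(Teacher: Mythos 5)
Your proof is correct and takes essentially the same route as the paper: reduce to the regular locus (where you pick $y\in V\cap\pi(X_{\reg})$, the paper phrases it as "we can assume $X_{\reg}=X$ since ${\mathcal R}_\pi$ is nowhere dense") and then exploit that $\pi$ restricted to $X_{\reg}$ is a local homeomorphism with finite fibers to push nowhere-density forward via the finitely many characteristic neighborhoods. You simply spell out in full the step the paper dismisses as "straightforward."
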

\begin{proof}
As $\mathcal{R}_\pi$ is a closed nowhere dense subset of $Y$, if $\text{int}_Y(\pi(Z)\setminus \mathcal{R}_\pi)=\varnothing$, then $\text{int}_Y(\pi(Z))=\varnothing$. Thus, we can assume $X_{\reg}=X$ and the statement follows straightforwardly because $\pi:X_{\reg}\to Y\setminus\mathcal{R}_\pi$ is a local homeomorphism with finite fibers.
\end{proof}

\subsection{Branched coverings}\label{branched}
Let $\pi:X\to Y$ be a finite quasi-covering, let $y\in Y$ and pick $x\in\pi^{-1}(y)$. Let $V$ be an open neighborhood of $y$ and let $U$ be an open and closed subset of $\pi^{-1}(V)$. By Lemma \ref{trivial} the restriction map $\pi|_U:U\to \pi(U)$ is also a finite quasi-covering and $\mathcal{B}_{\pi|_U}=\mathcal{B}_{\pi}\cap U$, so that $\mathcal{R}_{\pi|_U}\subset \mathcal{R}_{\pi}$ and $X_{\text{reg}}\cap U\subset U_{\text{reg}}$.

The next definition of branched covering is proposed to avoid the pathology described in Examples \ref{notbranched} below. Such type of examples can never appear in the context of complex analytic coverings \cite[Ch. III]{gr}. The similarity of branched coverings to complex analytic coverings allows us to define properly the ramification index and to use symmetric polynomials when dealing with branched coverings, see \S\ref{sympol}. These tools are crucial to prove Theorem \ref{colapse}.

\begin{defns}
With the notations introduced above:

(i) A characteristic neighborhood $U$ of $x$ with respect to a distinguished neighborhood $V$ of $y$ such that the restriction $\pi|_{U_{\reg}}:U_{\reg}\to V\setminus{\mathcal R}_{\pi|_U}$ is an unbranched covering is called an \em exceptional neighborhood of $x$ \em (with respect to $V$). If each member of a family of characteristic neighborhoods with respect to $V$ is exceptional, then $V$ is a \em special neighborhood of $y$\em. In that case, we say that such a family is a \em family of exceptional neighborhoods with respect to $V$\em. When it is clear from the context we will omit with respect to $V$.

The number $b_\pi(x)$ of sheets of $\pi|_{U_{\reg}}$ (the common cardinality of its fibers) is the \em ramification index \em of $x$ relative to $U$. We will show in Lemma \ref{indexwell} that $b_\pi(x)$ does not depend on $U$.

(ii) We say that $\pi$ is a \em branched covering \em if $X_{\reg}$ is a dense subset of $X$ and each $y\in Y$ admits a special neighborhood. 

(iii) Let $\pi:X\to Y$ be a branched covering. For each $y\in Y\setminus{\mathcal R}_\pi$ there exists an open neighborhood $W$ of $y$ such that the cardinality of the fibers at the points of $W$ is constant (Lemma \ref{genbranch}). We say that $\pi$ is a \em $d$-branched covering \em if this constant is the same positive integer $d$ for each point $y\in Y\setminus{\mathcal R}_\pi$. 
\end{defns}

\begin{lem}\label{cuenta0}
Let $\pi:X\to Y$ be a finite quasi-covering. Let $y\in Y$ and $x\in\pi^{-1}(y)$. Let $U\subset X$ be a characteristic neighborhood of $x$ with respect to a distinguished neighborhood $V\subset Y$ of $y$. Let $G$ be an open dense subset of $U$ such that the cardinality of the fibers of the restriction $\pi|_{G}:G\to \pi(G)$ is constant and equal to $d\in\N$. Then $\pi|_{U_{\reg}}:U_{\reg}\to V\setminus{\mathcal R}_{\pi|_U}$ is a $d$-unbranched covering and $U$ is an exceptional neighborhood of $x$ with respect to $V$. 
\end{lem}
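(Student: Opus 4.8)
The plan is to reduce everything to Lemma \ref{genbranch} applied on the restricted finite quasi-covering $\pi|_U\colon U\to\pi(U)$, together with a density argument that pins down the fiber cardinality on the regular locus. First I would recall (from the paragraph opening \S\ref{branched}) that $\pi|_U\colon U\to\pi(U)$ is again a finite quasi-covering, with $\mathcal{B}_{\pi|_U}=\mathcal{B}_\pi\cap U$, hence $U_{\reg}=U\setminus(\pi|_U)^{-1}(\mathcal{R}_{\pi|_U})$, and that $\pi(U)=V$ since $U$ is a characteristic neighborhood with respect to $V$ (so $\pi(U)=V$ by \eqref{cubierta}). The key point is then to show $U_{\reg}$ is dense in $U$: since $G$ is open and dense in $U$ and the fibers of $\pi|_G$ have constant cardinality $d$, every point of $G$ is a point where $\pi|_U$ is locally a homeomorphism onto its image — indeed by Lemma \ref{genbranch} (applied to $\pi|_U$ and to a point of $\pi(G)$, after shrinking as in Remark \ref{intersection} so that $G$ meets each characteristic sheet), each restriction to a characteristic neighborhood contained in $G$ is injective, open and closed, hence a homeomorphism. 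Therefore $G\subset U_{\reg}$, and since $G$ is dense in $U$, so is $U_{\reg}$; in particular $\mathcal{R}_{\pi|_U}$ is nowhere dense in $V$ by Lemma \ref{nowhere} (or directly).

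Next I would identify the number of sheets. Let $z\in V\setminus\mathcal{R}_{\pi|_U}$ be arbitrary; by Lemma \ref{genbranch} there is an open neighborhood $W\subset V\setminus\mathcal{R}_{\pi|_U}$ of $z$ over which the fibers of $\pi|_{U_{\reg}}$ have constant cardinality, say $e$, and moreover the $e$ characteristic neighborhoods over $W$ map homeomorphically onto $W$. Since $G$ is dense in $U$ and these $e$ characteristic neighborhoods form an open subset of $U_{\reg}$, each of them meets $G$; choosing a point of $W$ lying below a point of $G$ in each sheet (shrinking $W$ if necessary via Remark \ref{intersection}), we see the fiber of $\pi|_G$ over such a point has exactly $e$ points, forcing $e=d$. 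As $z$ was arbitrary in $V\setminus\mathcal{R}_{\pi|_U}$, the finite quasi-covering $\pi|_{U_{\reg}}\colon U_{\reg}\to V\setminus\mathcal{R}_{\pi|_U}$ has all fibers of cardinality $d$ and is everywhere a local homeomorphism, i.e.\ it is a $d$-unbranched covering. Consequently $U$ is, by definition, an exceptional neighborhood of $x$ with respect to $V$.

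The one delicate point — the main obstacle — is the bookkeeping that guarantees that a point of $W$ can be chosen below a point of $G$ \emph{in every sheet simultaneously}, and that the resulting fiber cardinality computed via $G$ agrees with $e$; this is where Remark \ref{intersection} does the work, letting us refine the characteristic neighborhoods so that each sheet $\widetilde U_i$ satisfies $\widetilde U_i\subset G$, after which $\#(\pi^{-1}(z)\cap\widetilde U_i)=\#(\pi^{-1}(z)\cap U_i)$ keeps the count honest. Everything else is a routine unwinding of the definitions of distinguished, characteristic and exceptional neighborhoods together with the already-established fact that restrictions of finite quasi-coverings to open-and-closed subsets of inverse images are again finite quasi-coverings.
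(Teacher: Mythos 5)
Your step computing the number of sheets (finding a point $z'\in W$ lying under $G$ in every sheet, so that $\#(\pi^{-1}(z')\cap G)=e$ and hence $e=d$) is correct and is essentially the paper's second claim, in fact a slightly cleaner variant that avoids invoking any prior count. The gap is in the step where you assert $G\subset U_{\reg}$.

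You claim that every point of $G$ is a point where $\pi|_U$ is locally a homeomorphism, ``by Lemma~\ref{genbranch} applied to $\pi|_U$ and to a point of $\pi(G)$''. But Lemma~\ref{genbranch}, used in the direction you need, requires you to already know that the fiber cardinality of $\pi|_U$ is constant on a neighborhood of $z\in\pi(G)$. The hypothesis only provides constant cardinality for the fibers of $\pi|_G$, and a priori the full fiber $\pi^{-1}(z)\cap U$ could be strictly larger than $\pi^{-1}(z)\cap G$: extra preimages lying in $U\setminus G$ are not excluded by the hypothesis. Moreover, even if one granted that every point of $G$ avoids $\mathcal{B}_{\pi|_U}$, this only gives $G\subset U\setminus\mathcal{B}_{\pi|_U}$, which is strictly weaker than $G\subset U_{\reg}=U\setminus(\pi|_U)^{-1}(\pi(\mathcal{B}_{\pi|_U}))$: a point $x\in G$ off the branching locus can still satisfy $\pi(x)\in\mathcal{R}_{\pi|_U}$ because of a branching point $x'\ne x$ in the same fiber with $x'\notin G$. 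So the step ``therefore $G\subset U_{\reg}$'' does not follow.

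What is missing is the paper's first claim: $\#(\pi^{-1}(z)\cap U)=d$ for every $z\in\pi(G)$. This is established by a contradiction argument of exactly the same density flavor as your step three: if some $z\in\pi(G)$ had $m>d$ preimages in $U$, take a distinguished neighborhood $V_0\subset\pi(G)$ of $z$ and its $m$ characteristic sheets $U_{01},\dots,U_{0m}$; each $G\cap U_{0i}$ is dense open in $U_{0i}$, so $\bigcap_i\pi(G\cap U_{0i})$ is dense open in $V_0$, and any $z'$ in it has $\#(\pi^{-1}(z')\cap G)\ge m>d$, contradicting the hypothesis. Once this is in place, $\pi(G)$ is an open set over which $\pi|_U$ has constant fiber size $d$, and \emph{then} Lemma~\ref{genbranch} gives $\pi(G)\subset V\setminus\mathcal{R}_{\pi|_U}$, hence $G\subset U_{\reg}$ and $U_{\reg}$ dense in $U$. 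Your step three then finishes the proof exactly as you wrote.
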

\begin{proof}
The branching locus of the finite quasi-covering $\pi|_{U_{\reg}}:U_{\reg}\to V\setminus{\mathcal R}_{\pi|_U}$ is empty. We claim: \em $\#(\pi^{-1}(z)\cap U)=d$ for each $z\in\pi(G)$\em.

Let $z\in\pi(G)$. By hypothesis $\#(\pi^{-1}(z)\cap G)=d$. Assume $m:=\#(\pi^{-1}(z)\cap U)>d$ and let $V_0\subset\pi(G)$ be a distinguished neighborhood of $z$. Let $U_{01},\ldots,U_{0m}$ be a family of characteristic neighborhoods with respect to $V_0$. As $G$ is dense in $U$, the intersection $G\cap U_{0i}$ is dense in $U_{0i}$, so by Lemmas \ref{trivial} and \ref{opcl} $\bigcap_{i=1}^m\pi(G\cap U_{0i})$ is a dense open subset of $V_0$. If $z'\in\bigcap_{i=1}^m\pi(G\cap U_{0i})$, then $d=\#(\pi^{-1}(z')\cap G)\geq m>d$, which is a contradiction.

We prove next: \em $\#(\pi^{-1}(y)\cap U)=d$ for each $y\in V\setminus{\mathcal R}_{\pi|_U}$\em.

Let $y\in V\setminus{\mathcal R}_{\pi|_U}$. By Lemma \ref{genbranch} there exists an open neighborhood $W\subset V$ such that the cardinality of the fiber $\pi^{-1}(z)\cap U$ for each $z\in W$ is a constant $c$. By Remark \ref{intersection} we can assume that $\pi^{-1}(W)\cap U_{\reg}=\bigsqcup^c_{i=1} U_i$ where each $\pi|_{U_i}$ is a homeomorphism onto $W$. As $G$ is an open dense subset of $U$, we deduce that $G\cap U_i$ is an open dense subset of $U_i$ for each $i=1,\ldots,c$. Thus, $\bigcap^c_{i=1} \pi(G\cap U_i)$ is a dense open subset of $W$. If $z\in \bigcap^c_{i=1}\pi(G\cap U_i)$, then the fiber $\pi^{-1}(z)\cap U$ has $d$ elements, so $c=d$. Thus, $\pi|_{U_{\reg}}:U_{\reg}\to V\setminus{\mathcal R}_{\pi|_U}$ is a $d$-unbranched covering, as required.
\end{proof}

As a straightforward application of Remark \ref{intersection} we have the following.

\begin{remark}\label{intersection:branched}
Let $\pi:X\to Y$ be a finite quasi-covering, let $y\in Y$ and denote $\pi^{-1}(y):=\{x_1,\ldots,x_r\}$. If each $x_i$ has an exceptional neighborhood $U_i$ for $i=1,\ldots,r$ and $U_1,\ldots,U_r$ are pairwise disjoint, then $y$ has a special neighborhood `as small as needed'. We are not assuming that $U_1,\ldots,U_r$ is a family of characteristic neighborhoods with respect to their common image, but that each $U_i$ belongs to a possibly different family of characteristic neighborhoods. 
\end{remark}

\begin{lem}[Ramification index]\label{indexwell}
Let $\pi:X\to Y$ be a branched covering and let $x\in X$. The ramification index $b_\pi(x)$ of $x$ does not depend on the chosen exceptional neighborhood of $x$. 
\end{lem}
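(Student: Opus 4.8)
The plan is to reduce the comparison of two exceptional neighborhoods of $x$ to the situation in which both are characteristic neighborhoods with respect to one and the same distinguished neighborhood of $y:=\pi(x)$, and then to exploit the fact that characteristic neighborhoods are \emph{clopen} in the preimage of that distinguished neighborhood. So let $U$ be an exceptional neighborhood of $x$ with respect to a distinguished neighborhood $V$ of $y$, with ramification index $d$ relative to $U$, and let $U'$ be an exceptional neighborhood of $x$ with respect to a distinguished neighborhood $V'$ of $y$, with index $d'$ relative to $U'$. The goal is to prove $d=d'$.

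First I would record a shrinking principle: if $U$ is an exceptional neighborhood of $x$ with respect to $V$ with index $d$, and $\widetilde V\subset V$ is any distinguished neighborhood of $y$, then $\pi^{-1}(\widetilde V)\cap U$ is again an exceptional neighborhood of $x$ with respect to $\widetilde V$, still of index $d$. That $\pi^{-1}(\widetilde V)\cap U$ is a characteristic neighborhood of $x$ with respect to $\widetilde V$ is immediate from the definitions (as $\pi|_U\colon U\to V$ is a finite quasi-covering; cf.\ Remark~\ref{intersection}), and that it is exceptional with index $d$ follows from Lemma~\ref{cuenta0} applied to $G:=\pi^{-1}(\widetilde V)\cap U_{\reg}$: the set $G$ is open and dense in $\pi^{-1}(\widetilde V)\cap U$ because $U_{\reg}$ is dense in $U$ (since $X_{\reg}$ is dense in $X$ and $X_{\reg}\cap U\subset U_{\reg}$), and $\pi|_G$ has constant fibers of cardinality $d$, because for $z\in\pi(G)\subset\widetilde V\setminus\mathcal R_{\pi|_U}$ one has $\pi^{-1}(z)\cap G=\pi^{-1}(z)\cap U_{\reg}$, which has exactly $d$ points as $\pi|_{U_{\reg}}\colon U_{\reg}\to V\setminus\mathcal R_{\pi|_U}$ is a $d$-unbranched covering. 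Using Lemma~\ref{disting} I would then pick a distinguished neighborhood $\widetilde V$ of $y$ with $\widetilde V\subset V\cap V'$ and, by the shrinking principle, replace $U$ and $U'$ by $\pi^{-1}(\widetilde V)\cap U$ and $\pi^{-1}(\widetilde V)\cap U'$; after this reduction $U$ and $U'$ are characteristic neighborhoods of $x$ with respect to the \emph{same} distinguished neighborhood $\widetilde V$, still with indices $d$ and $d'$. In particular $U$ and $U'$ are clopen in $\pi^{-1}(\widetilde V)$, we have $\pi(U)=\pi(U')=\widetilde V$, and $\pi^{-1}(y)\cap U=\{x\}=\pi^{-1}(y)\cap U'$.

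Next I would compare the fibers of the two ``tubes'' near $y$. Since $U$ and $U'$ are clopen in $\pi^{-1}(\widetilde V)$, so are $U\setminus U'$ and $U'\setminus U$, and neither of them contains $x$. As $\pi|_{\pi^{-1}(\widetilde V)}\colon\pi^{-1}(\widetilde V)\to\widetilde V$ is open and closed (Remark~\ref{fqc}), the images $\pi(U\setminus U')$ and $\pi(U'\setminus U)$ are clopen in $\widetilde V$, and they avoid $y$ because $\pi^{-1}(y)\cap(U\setminus U')=\varnothing=\pi^{-1}(y)\cap(U'\setminus U)$. Hence $\widetilde V_0:=\widetilde V\setminus\big(\pi(U\setminus U')\cup\pi(U'\setminus U)\big)$ is a nonempty open neighborhood of $y$ in $Y$, and for every $z\in\widetilde V_0$ one gets $\pi^{-1}(z)\cap(U\setminus U')=\varnothing=\pi^{-1}(z)\cap(U'\setminus U)$, that is, $\pi^{-1}(z)\cap U=\pi^{-1}(z)\cap U'$. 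Now $\mathcal R_{\pi|_U}$ and $\mathcal R_{\pi|_{U'}}$ are closed and nowhere dense in $\widetilde V$ (closed by definition; with dense complements $\widetilde V\setminus\mathcal R_{\pi|_U}=\pi(U_{\reg})$ and $\widetilde V\setminus\mathcal R_{\pi|_{U'}}=\pi(U'_{\reg})$, as $U_{\reg}$ is dense in $U$, $U'_{\reg}$ is dense in $U'$, and $\pi|_U$, $\pi|_{U'}$ are surjective), so I may choose $z\in\widetilde V_0\setminus(\mathcal R_{\pi|_U}\cup\mathcal R_{\pi|_{U'}})$. For such a $z$ one has $\pi^{-1}(z)\cap U=\pi^{-1}(z)\cap U_{\reg}$, which has exactly $d$ points, and $\pi^{-1}(z)\cap U'=\pi^{-1}(z)\cap U'_{\reg}$, which has exactly $d'$ points (this is the defining property of the ramification index of an exceptional neighborhood, read off over a point of the base of the corresponding unbranched covering); combining this with $\pi^{-1}(z)\cap U=\pi^{-1}(z)\cap U'$ yields $d=d'$.

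The step I expect to be the main obstacle is making this work without assuming the ambient spaces to be locally connected: if they were, characteristic neighborhoods would coincide with the connected components of $\pi^{-1}(V)$ (Corollary~\ref{ccs0}), hence be unique, and the statement would be essentially immediate. The device replacing connectedness is precisely the clopen-ness of characteristic neighborhoods inside $\pi^{-1}(\widetilde V)$: it forces the ``symmetric difference'' $U\setminus U'$, $U'\setminus U$ to be concentrated over a clopen part of $\widetilde V$ disjoint from the part containing $y$, so that over an entire neighborhood of $y$ the sets $U$ and $U'$ have literally the same fibers. The other place where some care is needed is setting up the shrinking principle cleanly, so that the reduction to a common $\widetilde V$ does not alter the indices $d$ and $d'$.
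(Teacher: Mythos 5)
Your proof is correct, and it takes a genuinely different route from the paper's. The paper reduces (as you may without loss of generality, by passing to a special neighborhood of $y$) to a $d$-branched covering with $\pi^{-1}(y)=\{x_1,\ldots,x_r\}$, replaces $U_1$ and $U'_1$ by characteristic neighborhoods inside a common $\widetilde V$ via Remark~\ref{intersection}, and then runs a \emph{complementary counting} argument: for $z\in\widetilde V\setminus\mathcal R_\pi$ one has $\#(\pi^{-1}(z)\cap U_1\cap U'_1)=d-\sum_{i\ge2}\#(\pi^{-1}(z)\cap\widetilde U_i)=d-\sum_{i\ge2}b^{U_i}_\pi(x_i)=b^{U_1}_\pi(x_1)$, whence $b^{U'_1}_\pi(x_1)\ge b^{U_1}_\pi(x_1)$, and symmetry finishes. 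You instead avoid the global fiber count $d$ altogether: after reducing to a common distinguished $\widetilde V$, you exploit that characteristic neighborhoods are \emph{clopen} in $\pi^{-1}(\widetilde V)$, so the symmetric differences $U\setminus U'$ and $U'\setminus U$ are clopen and miss $x$; since $\pi$ restricted to $\pi^{-1}(\widetilde V)$ is open and closed, their images are clopen in $\widetilde V$ and miss $y$, producing a whole neighborhood $\widetilde V_0$ of $y$ over which $\pi^{-1}(z)\cap U=\pi^{-1}(z)\cap U'$ \emph{set-theoretically}; picking $z$ there outside the two (nowhere dense) ramification sets gives $d=d'$ by a direct fiber count. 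Your approach is somewhat more self-contained (it does not quietly invoke the fact, proved only in Remark~\ref{cuenta}(i) which logically follows this lemma, that a special neighborhood yields a constant fiber cardinality over its regular part — though the paper's proof can be repaired by reading that count off directly from the family of exceptional neighborhoods), and it makes the key structural point explicit: clopen-ness of characteristic neighborhoods is the substitute for local connectedness (cf.\ Corollary~\ref{ccs0}), and it forces the two candidate tubes to literally agree over a neighborhood of $y$. The paper's proof, in return, is slightly shorter because it leans on the ambient $d$ and never needs to analyze the symmetric differences.
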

\begin{proof}
We may assume that $\pi$ is a $d$-branched covering. Let $y\in Y$ and $\pi^{-1}(y):=\{x_1,\ldots,x_r\}$. Let $V$ be a special open neighborhood of $y$ and let $U_1,\ldots,U_r$ be a family of exceptional neighborhoods of $x_1,\ldots,x_r$ with respect to $V$. Let $V'$ be another special open neighborhood of $y$ and let $U'_1,\ldots,U'_r$ be a family of exceptional neighborhoods of $x_1,\ldots,x_r$ with respect to $V'$. We denote by $b^{U_i}_{\pi}(x_i)$ the ramification index of $x_i$ with respect to $U_i$, that is, $b^{U_i}_{\pi}(x_i)=\#(\pi^{-1}(z)\cap U_i)$ for each $z\in V\setminus \mathcal{R}_\pi$. Next, we consider the ramification index $b^{U'_i}_{\pi}(x_i)$ of $x_i$ with respect to $U'_i$. After reordering the indexes, it is enough to show: $b^{U_1}_{\pi}(x_1)=b^{U'_1}_{\pi}(x_1)$ (that is, the case $i=1$).

By Remark \ref{intersection} there exists a distinguished open neighborhood $\widetilde{V}\subset V$ of $y$ and characteristic neighborhoods $\widetilde{U}_1,\ldots\widetilde{U}_r$ with respect to $\widetilde{V}$ such that $\widetilde{U}_1=\pi^{-1}(\widetilde{V})\cap U_1\cap U'_1$ and $\widetilde{U}_i=\pi^{-1}(\widetilde{V})\cap U_i$ for $i=2,\ldots,r$. In particular, for each $z\in \widetilde{V}\setminus \mathcal{R}_{\pi}$ we have $b^{U_i}_{\pi}(x_i)=\# (\pi^{-1}(z)\cap U_i)=\# (\pi^{-1}(z)\cap \widetilde{U_i})$ for $i=2,\ldots,r$ and 
\begin{align*}
b^{U'_1}_{\pi}(x_1)=\ &\# (\pi^{-1}(z)\cap U'_1)\geq\# (\pi^{-1}(z)\cap U_1\cap U'_1)=\# (\pi^{-1}(z)\cap\widetilde{U}_1)=\\
=\ &d-\sum^r_{i=2}\# (\pi^{-1}(z)\cap\widetilde{U}_i)=d-\sum^r_{i=2}b^{U_i}_{\pi}(x_i)=b^{U_1}_{\pi}(x_1). 
\end{align*}
We have proved $b^{U_1}_{\pi}(x_1)\leq b^{U'_1}_{\pi}(x_1)$. The same argument shows $b^{U'_1}_{\pi}(x_1)\leq b^{U_1}_{\pi}(x_1)$, so $b^{U_1}_{\pi}(x_1)= b^{U'_1}_{\pi}(x_1)$, as required.
\end{proof}

\begin{remarks}\label{cuenta}
Let $\pi:X\to Y$ be a branched covering, let $x\in X$ and let $y\in Y$. 

(i) Let $V$ be a special neighborhood of $y$. Then the restriction map $\pi|_{\pi^{-1}(V)}:\pi^{-1}(V)\to V$ is a $d_y$-branched covering where $d_y:=\sum_{x\in\pi^{-1}(y)}b_\pi(x)$. Thus, $d_y=\#(\pi^{-1}(w))$ for each $w\in V\setminus\mathcal{R}_{\pi|_{\pi^{-1}(V)}}$ and $d_z=d_y$ for each $z\in V$.

(ii) If $Y$ is connected then $\pi$ is a $d$-branched covering for some $d\geq1$.

By Remark \ref{cuenta}(i) the set $Y_d:=\{y\in Y:d_y=d\}$ is open for each $d\in \mathbb{N}$. Consequently, each set $Y_d$ is also closed. As $Y$ is connected, we deduce that $Y=Y_d$ for some $d\in \mathbb{N}$.

(iii) Write $\pi^{-1}(y):=\{x_1,\ldots,x_r\}$. Let $W^{x_i}\subset X$ be an open neighborhood of $x_i$ for each $i=1,\ldots,r$. Then there exists an open special neighborhood $V\subset Y$ of $y$ and a family of exceptional neighborhoods $U^{x_1},\ldots,U^{x_r}$ with respect to $V$ such that $U^{x_i}\subset W^{x_i}$ for $i=1,\ldots,r$.

Let $V_0\subset Y$ be a special neighborhood of $y$ and let $U_0^{x_i}$ be the corresponding exceptional neighborhood of $x_i$. By Remark \ref{intersection} there exists a distinguished neighborhood $V\subset V_0$ of $y$ and a family of characteristic neighborhoods $U^{x_i}\subset U_0^{x_i}\cap W^{x_i}$ of $x_i$ with respect to $V$ such that $\pi^{-1}(V)\cap U_0^{x_i}\cap W^{x_i}=U^{x_i}$ for each $i=1,\ldots,r$. Using Lemma \ref{trivial} and Remark \ref{intersection} the reader can check: \em $V$ is a special neighborhood of $y$ and each $U^{x_i}$ is an exceptional neighborhood of $x_i$\em. 

(iv) Let $U$ be an exceptional neighborhood of $x$. Then $b_{\pi}(x):=\max\{\#(\pi^{-1}(y)\cap U):\ y\in \pi(U)\}$. In addition, for each $x'\in U$ we have $b_{\pi}(x')\leq b_{\pi}(x)$. Thus, for each $e\in \mathbb{N}$ the set $\{b_{\pi}\leq e\}$ is open in $M$, so $b_{\pi}$ is upper semi-continuous.

By definition $b_{\pi}(x)=\#(\pi^{-1}(y))$ for each $y\in\pi(U)\setminus\mathcal{R}_{\pi|_U}$. Pick $y'\in V:=\pi(U)$ and denote $\pi^{-1}(y')\cap U=\{x'_1,\ldots,x'_\ell\}$. By Remark \ref{cuenta}(iii) there exist a special neighborhood $V'\subset V$ of $y'$ and exceptional neighborhoods $U'_1,\ldots,U'_\ell\subset U$ of $x'_1,\ldots,x'_\ell$. Each intersection $U_{\reg}\cap U'_i$ is a dense open subset of $U'_{i}$, so $\bigcap^\ell_{i=1} \pi(U_{\reg}\cap U'_i)$ is a dense open subset of $V'$. Thus, $\ell\leq b_{\pi}(x)$.

Next, given $x'\in U$ there exists by Remark \ref{cuenta}(iii) an exceptional neighborhood $U'$ of $x'$ contained in $U$. It follows that 
$$
b_{\pi}(x')=\max\{\#(\pi^{-1}(y)\cap U'):\ y\in \pi(U')\}\leq \max\{\#(\pi^{-1}(y)\cap U):\ y\in \pi(U)\}=b_{\pi}(x).
$$

(v) $b_{\pi}(x)=1$ if and only if $x\notin \mathcal{B}_{\pi}$.

The `only if part' follows from Remark \ref{cuenta}(iv), whereas the `if part' follows from Remark \ref{intersection} and Remark \ref{cuenta}(iii).\qed
\end{remarks}

\subsubsection{Behavior of branched coverings under restriction}
We analyze next how branched coverings behave under restriction.

\begin{lem}\label{restr}
Let $\pi:X\to Y$ be a branched covering. Let $W\subset Y$ be an open set. Let $W\subset Z\subset\cl(W)$ and denote $T:=\pi^{-1}(Z)$. Then $\pi|_{T}:T\to Z$ is a branched covering, ${\mathcal B}_{\pi|_T}={\mathcal B}_\pi\cap T$, $T_{\reg}=X_{\reg}\cap T$ and ${\mathcal R}_{\pi|_T}={\mathcal R}_\pi\cap Z$.
\end{lem}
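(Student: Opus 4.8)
The plan is to reduce everything to the local structure provided by special neighborhoods, and to verify the four claimed equalities in an order that lets each one feed the next. First I would record the easy facts that do not use the branched covering hypothesis: by Remark \ref{fqc} (applied via Lemma \ref{trivial}) the restriction $\pi|_T:T\to Z$ is a finite quasi-covering, since $W\subset Z\subset\cl(W)$ is an arbitrary set and $T=\pi^{-1}(Z)$. I would then establish ${\mathcal B}_{\pi|_T}={\mathcal B}_\pi\cap T$. The inclusion ${\mathcal B}_{\pi|_T}\subset{\mathcal B}_\pi\cap T$ is immediate because a point of $T$ at which $\pi$ restricted to a neighborhood in $X$ is a local homeomorphism is certainly a point at which $\pi|_T$ is a local homeomorphism. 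For the reverse inclusion one uses that $W$ is open and $Z$ is squeezed between $W$ and $\cl(W)$, together with Lemma \ref{opcl}: if $x\in T$ and $\pi|_T$ is a local homeomorphism at $x$, I would pass to an exceptional neighborhood $U$ of $x$ (which exists since $\pi$ is a branched covering), intersect with $\pi^{-1}(W)$, and use density of $W$ in $Z$ together with Lemma \ref{cuenta0} / Lemma \ref{genbranch} to conclude that $b_\pi(x)=1$, hence $x\notin{\mathcal B}_\pi$ by Remark \ref{cuenta}(v). From ${\mathcal B}_{\pi|_T}={\mathcal B}_\pi\cap T$ we get ${\mathcal R}_{\pi|_T}=\pi(T\cap{\mathcal B}_\pi)=\pi({\mathcal B}_\pi)\cap Z={\mathcal R}_\pi\cap Z$ by the set-theoretic identity in Lemma \ref{trivial}, and then $T_{\reg}=T\setminus\pi^{-1}({\mathcal R}_{\pi|_T})=T\setminus(\pi^{-1}({\mathcal R}_\pi)\cap T)=X_{\reg}\cap T$.

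Next I would prove that $\pi|_T$ is a branched covering. Two things must be checked: that $T_{\reg}$ is dense in $T$, and that every $z\in Z$ admits a special neighborhood for $\pi|_T$. For density, since $W$ is open and dense in $Z$, and $X_{\reg}$ is dense in $X$ with $\pi$ open, $X_{\reg}\cap\pi^{-1}(W)$ is dense in $\pi^{-1}(W)$; combined with density of $\pi^{-1}(W)$ in $T=\pi^{-1}(Z)$ (which follows from density of $W$ in $Z$ and Lemma \ref{opcl}(i)), one gets that $T_{\reg}=X_{\reg}\cap T$ is dense in $T$. For the special neighborhood, given $z\in Z$ with fiber $\{x_1,\dots,x_r\}$, I would take a special neighborhood $V\subset Y$ of $z$ with respect to $\pi$ and a family of exceptional neighborhoods $U^{x_1},\dots,U^{x_r}$; then set $V':=V\cap Z$ and $U'^{x_i}:=U^{x_i}\cap T$. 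That $V'$ is a distinguished neighborhood of $z$ for $\pi|_T$ with the $U'^{x_i}$ as characteristic neighborhoods is immediate from Lemma \ref{trivial} and the definitions. That each $U'^{x_i}$ is in fact exceptional for $\pi|_T$ is the crux: one must check $\pi|_{(U'^{x_i})_{\reg}}:(U'^{x_i})_{\reg}\to V'\setminus{\mathcal R}_{\pi|_{U'^{x_i}}}$ is an unbranched covering. Here I would invoke the restriction behavior already proved ($(U'^{x_i})_{\reg}=(U^{x_i})_{\reg}\cap T$, ${\mathcal R}_{\pi|_{U'^{x_i}}}={\mathcal R}_{\pi|_{U^{x_i}}}\cap Z$), so that $\pi|_{(U'^{x_i})_{\reg}}$ is the restriction of the unbranched covering $\pi|_{(U^{x_i})_{\reg}}$ over $V\setminus{\mathcal R}_{\pi|_{U^{x_i}}}$ to the inverse image of $(V\setminus{\mathcal R}_{\pi|_{U^{x_i}}})\cap Z$; restriction of an unbranched covering to the preimage of a subset is an unbranched covering with the same number of sheets by Remark \ref{fqc} plus the fact that local-homeomorphism-with-$c$-sheets is preserved. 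In particular the ramification indices are preserved: $b_{\pi|_T}(x)=b_\pi(x)$ for $x\in T$, which I would note explicitly.

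The main obstacle I anticipate is the reverse inclusion ${\mathcal B}_\pi\cap T\subset{\mathcal B}_{\pi|_T}$, equivalently the statement that a point $x\in T$ with $b_\pi(x)\geq 2$ also has $b_{\pi|_T}(x)\geq 2$. The subtlety is that $T$ need not be open in $X$, so shrinking to $T$ could a priori kill sheets and turn a branch point into a non-branch point — this is exactly the kind of pathology the notion of branched covering is designed to exclude, but one has to use the hypothesis $W\subset Z\subset\cl(W)$ to rule it out. The argument is that on an exceptional neighborhood $U$ of $x$, the regular locus $U_{\reg}$ is dense and $\pi|_{U_{\reg}}$ has $b_\pi(x)$ sheets; intersecting with the dense open set $\pi^{-1}(W)$ keeps $b_\pi(x)$ sheets over a dense open subset of $\pi(U)\cap W$, and since $W\subset Z$, these sheets survive in $T$; by Lemma \ref{cuenta0} (with $G=U_{\reg}\cap\pi^{-1}(W)\cap T$, which is dense in $U\cap T$ because $W$ is dense in $Z$) the restriction $\pi|_{(U\cap T)_{\reg}}$ is a $b_\pi(x)$-unbranched covering, so $b_{\pi|_T}(x)=b_\pi(x)$, giving $x\in{\mathcal B}_{\pi|_T}$ when $b_\pi(x)\geq2$. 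Once this is in place, all four equalities in the statement follow by the bookkeeping described above.
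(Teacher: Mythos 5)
Your proposal is correct and follows essentially the same route as the paper: both hinge on the observation that, because $\pi^{-1}(W)\subset T\subset\cl(\pi^{-1}(W))$, any set dense in an open $U\subset X$ remains dense after intersecting with $T$, and both then invoke Lemma~\ref{cuenta0} with a dense open $G\subset U\cap T$ of constant fiber cardinality to show that the traces $U^{x_i}\cap T$ of exceptional neighborhoods are again exceptional, whence $b_{\pi|_T}=b_\pi|_T$ and all four equalities follow. The only cosmetic difference is organizational: you prove ${\mathcal B}_{\pi|_T}={\mathcal B}_\pi\cap T$ as a separate first step and then re-run the same Lemma~\ref{cuenta0} argument to exhibit special neighborhoods, whereas the paper verifies the special-neighborhood condition once (obtaining $b_{\pi|_T}(x_i)=b_\pi(x_i)$ along the way) and then reads off ${\mathcal B}_{\pi|_T}={\mathcal B}_\pi\cap T$ from Remark~\ref{cuenta}(v), avoiding the slight redundancy.
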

\begin{proof}
We use freely in this proof the following: \em If $U$ is an open subset of $X$ and $A$ is a dense subset of $U$, then $A\cap T$ is dense in $U\cap T$\em. 

By Remark \ref{fqc} $\pi|_{T}$ is a finite quasi-covering. Observe that ${\mathcal B}_{\pi|_T}\subset{\mathcal B}_\pi\cap T$, so ${\mathcal R}_{\pi|_T}\subset{\mathcal R}_\pi\cap Z$ and $X_{\reg}\cap T\subset T_{\reg}$. As $X_{\reg}$ is dense in $X$, it follows $T_{\reg}$ is dense in $T$.

Pick a point $y\in Z$ and write $\pi^{-1}(y):=\{x_1,\ldots,x_r\}\subset T$. Let $V$ be a special neighborhood of $y$ and let $U_1,\ldots,U_r$ be a family of exceptional neighborhoods of $x_1,\ldots,x_r$ with respect to $V$. Thus, each restriction $\pi|_{U_{i,\reg}}:U_{i,\reg}\to V\setminus{\mathcal R}_{\pi|_{U_i}}$ is a $(b_\pi(x_i))$-unbranched covering. Regarding $\pi|_T:T\to Z$, the open set $V\cap Z$ is a distinguished neighborhood of $y$ and $U_1\cap T,\ldots,U_r\cap T$ is a family of characteristic neighborhoods with respect to $V\cap Z$. In addition,
$$
\pi|_{(U_i\cap T)_{\reg}}:(U_i\cap T)_{\reg}\to(V\cap Z)\setminus{\mathcal R}_{\pi|_{U_i\cap T}}
$$
is also a $(b_\pi(x_i))$-unbranched covering by Lemma \ref{cuenta0} because $U_{i,\reg}\cap T$ is a dense open subset of $U_i\cap T$ and the cardinality of the fibers of $\pi|_{U_{i,\reg}\cap T}$ equals $b_\pi(x_i)$. We conclude that $U_1\cap T,\ldots,U_r\cap T$ is a family of exceptional neighborhoods with respect to the special neighborhood $V\cap T$. In particular, $\pi|_{\pi^{-1}(V)\cap T}:\pi^{-1}(V)\cap T\to V\cap Z$ is a $d$-branched covering where $d:=\sum_{i=1}^rb_\pi(x_i)$.

By Remark \ref{cuenta}(v) $x_i\in{\mathcal B}_{\pi|_T}$ if and only if $b_\pi(x_i)>1$, so ${\mathcal B}_{\pi|_T}={\mathcal B}_\pi\cap T$ and by Lemma \ref{trivial} ${\mathcal R}_{\pi|_T}={\mathcal R}_\pi\cap Z$. Finally,
$$
T_{\reg}=T\setminus \pi^{-1}(\mathcal{R}_{\pi|_T})=T\setminus (\pi^{-1}(\mathcal{R}_{\pi})\cap\pi^{-1}(Z))=T\setminus \pi^{-1}(\mathcal{R}_{\pi})=T\cap X_{\reg},
$$
as required.
\end{proof}

The following result is a straightforward consequence of the previous lemma.

\begin{cor}\label{rcc}
Let $\pi:X\to Y$ be a map and assume that $Y$ has finitely many connected components $Y_1,\ldots,Y_r$. Denote $X_i:=\pi^{-1}(Y_i)$ for each $i=1,\ldots,r$. Then $\pi:X\to Y$ is a branched covering if and only if for each $i=1,\ldots, k$ there exists an integer $d_i\geq 1$ such that ${\pi|_{X_i}:X_i\to Y_i}$ is a $d_i$-branched covering.
\end{cor}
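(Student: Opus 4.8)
The plan is to exploit the fact that, since $Y$ has finitely many connected components, each $Y_i$ is a clopen subset of $Y$; hence each $X_i=\pi^{-1}(Y_i)$ is a clopen subset of $X$ and $X=\bigsqcup_{i=1}^{r}X_i$. Because $Y_1,\dots,Y_r$ are pairwise disjoint, every fiber of $\pi$ is entirely contained in a single $X_i$. This turns essentially every clause in the definitions of finite quasi-covering and of branched covering into a condition that one may check componentwise over the clopen decomposition, and it is this observation that makes the corollary a short consequence of Lemma~\ref{restr}.

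For the direct implication I would fix $i$ and apply Lemma~\ref{restr} with $W:=Y_i$ and $Z:=Y_i$; this is legitimate because $Y_i$ is closed, so $W\subset Z\subset\cl(W)=Y_i$. The lemma yields that $\pi|_{X_i}\colon X_i\to Y_i$ is a branched covering, and since $Y_i$ is connected, Remark~\ref{cuenta}(ii) provides an integer $d_i\ge 1$ with $\pi|_{X_i}$ a $d_i$-branched covering.

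For the converse, assuming each $\pi|_{X_i}$ is a $d_i$-branched covering, I would first check that $\pi$ is a finite quasi-covering: surjectivity, finiteness of fibers and separatedness follow because each fiber of $\pi$ is a fiber of exactly one $\pi|_{X_i}$ and $X_i$ is open in $X$, while openness and closedness follow from $X=\bigsqcup_iX_i$ together with the fact that each $\pi|_{X_i}$ is open and closed onto the clopen subset $Y_i$ of $Y$. Next, since each $X_i$ is open in $X$, being a local homeomorphism at a point is unchanged upon restriction to $X_i$, so $\mathcal{B}_\pi=\bigsqcup_i\mathcal{B}_{\pi|_{X_i}}$, whence $\mathcal{R}_\pi=\bigsqcup_i\mathcal{R}_{\pi|_{X_i}}$ (a disjoint union, as $\mathcal{R}_{\pi|_{X_i}}\subset Y_i$) and $X_{\reg}=\bigsqcup_i(X_i)_{\reg}$; as each $(X_i)_{\reg}$ is dense in the clopen set $X_i$, $X_{\reg}$ is dense in $X$. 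Finally, given $y\in Y$ with $y\in Y_i$, I would take a special neighborhood $V\subset Y_i$ of $y$ with respect to $\pi|_{X_i}$; since $V$ is open in $Y$ and $\pi^{-1}(V)\subset X_i$, the families of characteristic neighborhoods, the regular loci $U_{\reg}$ and the exceptional neighborhoods with respect to $V$ are literally the same whether computed for $\pi$ or for $\pi|_{X_i}$, so $V$ is a special neighborhood of $y$ with respect to $\pi$. Hence $\pi$ is a branched covering.

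I do not expect a genuine obstacle here; the only point requiring care is the bookkeeping that the notions of characteristic neighborhood, exceptional neighborhood and regular locus are intrinsic to the restriction of the map to open subsets of $\pi^{-1}(V)$, hence insensitive to whether one works inside $X$ or inside the clopen piece $X_i$ — this is precisely what makes both the clause ``each $y\in Y$ has a special neighborhood'' and the density of $X_{\reg}$ glue over the finite clopen partition. Everything else is a routine componentwise verification.
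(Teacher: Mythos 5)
Your proposal is correct and follows exactly the route the paper intends: the paper gives no proof, stating only that the corollary ``is a straightforward consequence of the previous lemma'' (Lemma~\ref{restr}), and your argument is precisely the obvious implementation of that remark — Lemma~\ref{restr} with $W=Z=Y_i$ (using that each $Y_i$ is clopen) plus Remark~\ref{cuenta}(ii) for the forward direction, and a componentwise gluing over the finite clopen partition for the converse, including the key observation that openness of $X_i$ in $X$ makes the notions of branching locus, characteristic/exceptional neighborhood and regular locus insensitive to whether one works in $X$ or in $X_i$. No gaps.
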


\begin{lem}\label{opcl2}
Let $\pi:X\to Y$ be a branched covering. Let $T\subset X$ be an open and closed set and denote $Z:=\pi(T)$. Then $\pi|_T:T\to Z$ is a branched covering, ${\mathcal B}_{\pi|_T}={\mathcal B}_\pi\cap T$, ${\mathcal R}_{\pi|_T}\subset{\mathcal R}_\pi\cap Z$ and $X_{\reg}\cap T\subset T_{\reg}$.
\end{lem}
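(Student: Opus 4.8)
The plan is to reduce Lemma~\ref{opcl2} to the already-proved Lemma~\ref{restr}. The key observation is that since $\pi:X\to Y$ is open and closed and $T\subset X$ is open and closed, the image $Z:=\pi(T)$ is both open and closed in $Y$. Being open, $Z$ satisfies $Z\subset\cl(Z)$ trivially, so with $W:=Z$ we are exactly in the situation $W\subset Z\subset\cl(W)$ of Lemma~\ref{restr}. However, there is a subtlety: Lemma~\ref{restr} would give a statement about $\pi^{-1}(Z)$, not about $T$ itself, and in general $T\subsetneq\pi^{-1}(Z)$. So the real content is to pass from the branched covering $\pi|_{\pi^{-1}(Z)}:\pi^{-1}(Z)\to Z$ to its further restriction to the open and closed piece $T$.

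\textbf{Main steps.} First I would record that $Z=\pi(T)$ is open and closed in $Y$: openness because $\pi$ is open, closedness because $\pi$ is closed and $T$ is closed. Then, applying Lemma~\ref{restr} with $W=Z$ (so $T_{\text{Lemma}}=\pi^{-1}(Z)$), we get that $\pi|_{\pi^{-1}(Z)}:\pi^{-1}(Z)\to Z$ is a branched covering, with $\mathcal{B}_{\pi|_{\pi^{-1}(Z)}}=\mathcal{B}_\pi\cap\pi^{-1}(Z)$, $(\pi^{-1}(Z))_{\reg}=X_{\reg}\cap\pi^{-1}(Z)$, and $\mathcal{R}_{\pi|_{\pi^{-1}(Z)}}=\mathcal{R}_\pi\cap Z$. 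Next I would observe that $T$ is open and closed in $\pi^{-1}(Z)$ (it is open and closed in all of $X$, hence in the subspace $\pi^{-1}(Z)$), and that $\pi$ restricted to $\pi^{-1}(Z)$ maps $T$ onto $Z$. So it suffices to treat the case where $\pi:X\to Y$ is a branched covering with $Y=Z$ and $T\subset X$ open and closed with $\pi(T)=Y$, and show $\pi|_T:T\to Y$ is a branched covering with the asserted properties. For this, fix $y\in Y$, write $\pi^{-1}(y)=\{x_1,\dots,x_r\}$, and relabel so that $x_1,\dots,x_s\in T$ while $x_{s+1},\dots,x_r\notin T$ (with $s\ge 1$ since $\pi(T)=Y$). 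Take a special neighborhood $V$ of $y$ with exceptional neighborhoods $U_1,\dots,U_r$. After shrinking $V$ using Remark~\ref{intersection}/Remark~\ref{cuenta}(iii), I may assume $U_i\subset T$ for $i\le s$ and $U_i\cap T=\varnothing$ for $i>s$ (possible because $T$ and $X\setminus T$ are both open neighborhoods of the respective points). Then $\pi^{-1}(V)\cap T=U_1\sqcup\cdots\sqcup U_s$, and each $U_i$ ($i\le s$) remains an exceptional neighborhood of $x_i$ for $\pi|_T$: indeed $\pi|_{U_i}$ is unchanged, so $\mathcal{B}_{\pi|_{U_i}}$, $(U_i)_{\reg}$, $\mathcal{R}_{\pi|_{U_i}}$ and the sheet count are the same whether computed in $X$ or in $T$. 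Hence $V\cap Z = V$ is a special neighborhood of $y$ for $\pi|_T$, and since $X_{\reg}$ dense in $X$ implies $X_{\reg}\cap T$ dense in the open set $T$ (the density fact quoted at the start of the proof of Lemma~\ref{restr}), $\pi|_T$ is a branched covering.

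\textbf{The remaining identities and the main obstacle.} For $\mathcal{B}_{\pi|_T}=\mathcal{B}_\pi\cap T$: the inclusion $\subset$ is formal (a local homeomorphism on an open subset restricts to one), and $\supset$ follows because, as just noted, for $x\in T$ an exceptional neighborhood $U$ of $x$ can be chosen inside $T$, so $b_{\pi|_T}(x)=b_\pi(x)$, and by Remark~\ref{cuenta}(v) membership in the branching locus is detected by the ramification index being $>1$. Then $\mathcal{R}_{\pi|_T}=\pi(\mathcal{B}_{\pi|_T})=\pi(\mathcal{B}_\pi\cap T)\subset\pi(\mathcal{B}_\pi)\cap\pi(T)=\mathcal{R}_\pi\cap Z$ (using Lemma~\ref{trivial} for the reverse-direction set equality $\pi(\mathcal{B}_\pi\cap T)=\pi(\mathcal{B}_\pi)\cap\pi(T)$ when $T=\pi^{-1}(Z)$... actually here $T$ need not be a full preimage, so only the inclusion $\subset$ holds, which is exactly what is claimed). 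Finally $X_{\reg}\cap T\subset T_{\reg}$: if $x\in X_{\reg}\cap T$ then $\pi(x)\notin\mathcal{R}_\pi\supset\mathcal{R}_{\pi|_T}$, so $x\notin\pi|_T^{-1}(\mathcal{R}_{\pi|_T})$, i.e. $x\in T_{\reg}$. The main obstacle — really the only non-bookkeeping point — is justifying that one can simultaneously shrink $V$ so that the exceptional neighborhoods of the points in $T$ land inside $T$ and those of the points outside $T$ avoid $T$; this is where Remark~\ref{cuenta}(iii) (choosing exceptional neighborhoods inside prescribed open sets $W^{x_i}$, here $W^{x_i}=T$ or $W^{x_i}=X\setminus T$) does the work, and one must check that a characteristic/exceptional neighborhood of $\pi|_T$ is genuinely obtained, which is immediate since the restriction of $\pi$ to each such $U_i$ is literally the same map.
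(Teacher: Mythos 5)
Your proof is correct and takes essentially the same route as the paper: pick a special neighborhood $V$ of $y$ with exceptional neighborhoods $U_1,\ldots,U_r$, shrink (via Remark~\ref{cuenta}(iii)) those over fibre points in $T$ to lie inside $T$, and observe that the resulting restriction gives a special neighborhood for $\pi|_T$. Your preliminary detour through Lemma~\ref{restr} (first restricting to $\pi^{-1}(Z)$) is an organizational difference only; and your explicit extra shrinking to guarantee $U_i\cap T=\varnothing$ for $i>s$ is a legitimate precaution, needed to conclude $\pi^{-1}(V)\cap T=U_1\sqcup\cdots\sqcup U_s$ when the $U_i$ are not connected, which the paper leaves implicit.
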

\begin{proof}
First, observe that $Z$ is open and closed in $Y$. As $T$ is an open and closed subset of $\pi^{-1}(Z)$, the restriction map $\pi|_T:T\to Z$ is a finite quasi-covering with ${\mathcal B}_{\pi|_T}={\mathcal B}_\pi\cap T$, ${\mathcal R}_{\pi|_T}\subset{\mathcal R}_\pi\cap Z$ and $T\cap X_{\reg}\subset T_{\reg}$. As $X_{\reg}$ is dense in $X$ and $T$ is open, $X_{\reg}\cap T$ is dense in $T$, so $T_{\reg}$ is dense in $T$.

Pick $y\in Z$ and write $\pi^{-1}(y):=\{x_1,\ldots,x_r\}$. We may assume $\pi^{-1}(y)\cap T=\{x_1,\ldots,x_s\}$ for some $s\leq r$. Let $V\subset Y$ be a special neighborhood of $y$ and $U_1,\ldots,U_r$ be a family of exceptional neighborhoods of $x_1,\ldots,x_r$ with respect to $V$. As each set $U_1\cap T,\ldots, U_s\cap T$ is open, we can assume by Remark \ref{cuenta}(iii) that $U_1,\ldots,U_s\subset T$, so $V\subset Z$. As $\pi|_{U_{i,\reg}}:U_{i,\reg}\to V\setminus{\mathcal R}_{\pi|_{U_i}}$ is an unbranched covering for $i=1,\ldots,s$, we conclude that $V$ is a special neighborhood with respect to $\pi|_T:T\to Z$ and $U_1,\ldots,U_s$ are exceptional neighborhoods. Thus, $\pi|_T$ is a branched covering, as required.
\end{proof}

\subsubsection{Some special branched coverings}
We propose next some mild sufficient conditions under which we can guarantee that a finite quasi-covering is a branched covering.

\begin{lem}\label{sc}
Let $\pi:X\to Y$ be a finite quasi-covering such that $d:=\sup\{\#(\pi^{-1}(y)):\ y\in Y\}<+\infty$ and for each $y\in Y$ there exists a distinguished neighborhood $V$ of $y$ such that $V\cap(Y\setminus{\mathcal R}_\pi)$ is connected. Then $\pi$ is a branched covering. 
\end{lem}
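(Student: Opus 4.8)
The plan is to verify the two conditions in the definition of branched covering: that $X_{\reg}$ is dense in $X$, and that every $y\in Y$ admits a special neighborhood. First I would establish density of $X_{\reg}$. By Corollary \ref{max}, every $y\in Y$ with $\#(\pi^{-1}(y))=d$ lies in $Y\setminus{\mathcal R}_\pi$, and since $d$ is the supremum of the fiber cardinalities, Lemma \ref{disting} shows that the set $\{y\in Y:\#(\pi^{-1}(y))=d\}$ contains the fiber-cardinality-$\geq d$ locus, which is open; hence $Y\setminus{\mathcal R}_\pi$ has nonempty interior near any point where a distinguished neighborhood meets it. More carefully: I would argue that ${\mathcal R}_\pi$ is nowhere dense. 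Fix $y\in Y$ and a distinguished neighborhood $V$ with $V\cap(Y\setminus{\mathcal R}_\pi)$ connected (in particular nonempty, once we know $Y\setminus{\mathcal R}_\pi$ is dense — which I will get from the maximal-fiber argument: shrinking $V$ we may assume it is contained in the open set where fibers have cardinality $\geq r:=\#(\pi^{-1}(y))$, then the maximum of $\#(\pi^{-1}(z))$ over $z\in V$ is attained and by Corollary \ref{max} applied to $\pi|_{\pi^{-1}(V)}$ that point lies outside ${\mathcal R}_{\pi|_{\pi^{-1}(V)}}$, giving points of $Y\setminus{\mathcal R}_\pi$ arbitrarily close to $y$). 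Thus $Y\setminus{\mathcal R}_\pi$ is dense, ${\mathcal R}_\pi$ is closed and nowhere dense, and $X_{\reg}=X\setminus\pi^{-1}({\mathcal R}_\pi)$ is dense in $X$ because $\pi$ is open (so $\pi^{-1}({\mathcal R}_\pi)$ is closed nowhere dense, being the preimage of a closed nowhere dense set under an open continuous surjection — one checks the interior of $\pi^{-1}({\mathcal R}_\pi)$ maps into ${\mathcal R}_\pi$).

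Next I would produce the special neighborhoods. Fix $y\in Y$, write $\pi^{-1}(y)=\{x_1,\dots,x_r\}$, and choose a distinguished neighborhood $V$ of $y$ with $V\cap(Y\setminus{\mathcal R}_\pi)$ connected; shrinking $V$ we may assume (Lemma \ref{disting}) there is a family of characteristic neighborhoods $U_1,\dots,U_r$ with respect to $V$, so $\pi^{-1}(V)=\bigsqcup_{i=1}^r U_i$ and each $\pi|_{U_i}:U_i\to V$ is a finite quasi-covering. By Lemma \ref{trivial} each restriction $\pi|_{U_i}$ is again a finite quasi-covering onto $V$, and ${\mathcal R}_{\pi|_{U_i}}\subset{\mathcal R}_\pi\cap V$, so $V\setminus{\mathcal R}_{\pi|_{U_i}}\supset V\setminus{\mathcal R}_\pi$. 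The key point is then: $\pi|_{U_{i,\reg}}:U_{i,\reg}\to V\setminus{\mathcal R}_{\pi|_{U_i}}$ is an unbranched covering, i.e. its fibers have constant cardinality. Its branching locus is empty by construction of $U_{i,\reg}$, so it is a finite quasi-covering with empty branching locus, hence (Lemma \ref{genbranch}) a local homeomorphism; it remains to see the fiber cardinality is \emph{globally} constant on $V\setminus{\mathcal R}_{\pi|_{U_i}}$. Here I would invoke connectedness: the function $z\mapsto\#(\pi^{-1}(z)\cap U_i)$ is locally constant on $V\setminus{\mathcal R}_{\pi|_{U_i}}$ (Lemma \ref{genbranch} again, or directly from the covering property), so it is constant on the connected set $V\setminus{\mathcal R}_\pi$; and one checks $V\setminus{\mathcal R}_{\pi|_{U_i}}$ adds nothing relevant, or better, one replaces $V$ at the outset by a distinguished neighborhood inside the given one so that $V\setminus{\mathcal R}_{\pi|_{U_i}}$ itself is connected — since ${\mathcal R}_{\pi|_{U_i}}\subset{\mathcal R}_\pi$, connectedness of $V\setminus{\mathcal R}_\pi$ together with density of $V\setminus{\mathcal R}_\pi$ in $V\setminus{\mathcal R}_{\pi|_{U_i}}$ forces the latter to be connected too. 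Thus each $U_i$ is an exceptional neighborhood of $x_i$ with respect to $V$, so $V$ is a special neighborhood of $y$.

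The main obstacle I anticipate is the bookkeeping in the second paragraph: passing from "$\pi|_{U_{i,\reg}}$ is a local homeomorphism with finite fibers" to "its fibers have \emph{constant} cardinality" genuinely uses that the connectedness hypothesis is available for a distinguished neighborhood small enough to also carry a characteristic decomposition, and that restricting to $U_i$ only shrinks the ramification set. One has to be careful that $V\setminus{\mathcal R}_{\pi|_{U_i}}$ (which is what the definition of exceptional neighborhood refers to) is connected, not merely $V\setminus{\mathcal R}_\pi$; the cleanest route is the density/closure argument — ${\mathcal R}_{\pi|_{U_i}}$ is a closed subset of ${\mathcal R}_\pi\cap V$, both are nowhere dense, so $V\setminus{\mathcal R}_\pi$ is open dense in $V\setminus{\mathcal R}_{\pi|_{U_i}}$, and an open dense subset of a space is connected only if... — actually the implication goes the convenient way: if $V\setminus{\mathcal R}_{\pi|_{U_i}}$ were disconnected, its (open) pieces would meet the dense subset $V\setminus{\mathcal R}_\pi$, contradicting connectedness of the latter. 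Everything else is a direct assembly of Lemmas \ref{trivial}, \ref{disting}, \ref{genbranch} and Corollary \ref{max}, together with Lemma \ref{cuenta0} to identify the common fiber cardinality as the ramification index and conclude each $U_i$ is exceptional.
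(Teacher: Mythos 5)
Your argument is correct and proceeds along essentially the same lines as the paper's proof: establish density of $Y\setminus\mathcal{R}_\pi$, then for each $y$ take the hypothesized distinguished $V$, pass to characteristic neighborhoods $U_i$, and use that the locally constant fiber count on $V\setminus\mathcal{R}_{\pi|_{U_i}}$ must be globally constant because $V\setminus\mathcal{R}_\pi$ is connected and dense in it. The only superficial difference is in the density step, where you apply Corollary \ref{max} to the restricted quasi-covering $\pi|_{\pi^{-1}(V)}$ (finding a point of maximal fiber cardinality inside any distinguished $V$), whereas the paper argues by contradiction — an open set contained in $\mathcal{R}_\pi$ would contain a nonempty open subset of constant fiber cardinality, contradicting Lemma \ref{genbranch}; both rest on the same observation that locally maximal fiber cardinality forces a point out of the ramification set.
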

\begin{proof}
We show first: \em $X_{\reg}=X\setminus\pi^{-1}({\mathcal R}_{\pi})$ is dense in $X$\em, or equivalently: \em $Y\setminus{\mathcal R}_{\pi}$ is dense in $Y$\em. 

Suppose that ${\mathcal R}_{\pi}$ contains a non-empty open subset $V\subset Y$. Define $V_k:=\{y\in V: \#(\pi^{-1}(y))=k\}$ and note that $V=V_1\sqcup\cdots\sqcup V_d$. In particular, for some $1\leq k\leq d$ the interior $\Int(V_k)\cap V$ of $V_k$ in $V$ is not empty. By Lemma \ref{genbranch} we get $\Int(V_k)\cap V\subset Y\setminus{\mathcal R}_\pi$, which is a contradiction.

Fix $y\in Y$. By hypothesis there exists a distinguished neighborhood $V$ of $y$ such that ${V\cap(Y\setminus{\mathcal R}_\pi)}$ is connected. As $Y\setminus{\mathcal R}_\pi$ is dense in $Y$, the intersection $V\cap(Y\setminus{\mathcal R}_\pi)$ is dense in $V$, so $V$ is connected. Write $\pi^{-1}(y):=\{x_1,\ldots,x_r\}$.

Let $U_1,\ldots,U_r$ be a family of characteristic neighborhoods of $x_1,\ldots,x_r$ with respect to $V$. Then $\pi|_{U_i}:U_i\to V$ is a finite quasi-covering and ${\mathcal R}_{\pi|_{U_i}}\subset{\mathcal R}_{\pi}\cap V$ for $i=1,\ldots,r$. Thus, each difference $V\setminus {\mathcal R}_{\pi|_{U_i}}$ is connected. By Lemma \ref{genbranch} the cardinality of the fibers of the restriction map $\pi|_{U_{i,\reg}}:U_{i,\reg}\to V\setminus {\mathcal R}_{\pi|_{U_i}}$ is locally constant. As $V\setminus {\mathcal R}_{\pi|_{U_i}}$ is connected, $\pi|_{U_{i,\reg}}$ is a $k_i$-unbranched covering for some $k_i\in\N$, so $U_i$ is an exceptional neighborhood of $x_i$, as required.
\end{proof}

\begin{remark}
The previous situation is quite common. It arises for instance when one considers the underlying real structure of a complex irreducible analytic germ and analyzes local parameterization theorem \cite[Ch.2.B \& Ch.3.B]{gr} (as a consequence of Noether's normalization lemma \cite[Ch.5.Ex.16]{am}). Analogously, it appears when consider the underlying real structure of Noether's normalization lemma of a complex irreducible algebraic set \cite[Ch.5.Ex.16]{am}. 
\end{remark}

\subsection{Collapsing set of a finite quasi-covering}\label{s4}
Our next purpose is to analyze the set of points at which there exists a complete collapse of the fibers of a finite quasi-covering. 

\begin{defn}
Let $\pi:X\to Y$ be a finite quasi-covering. We define the \em collapsing set of $\pi$ \em as $\Cc_\pi:=\{x\in X:\pi^{-1}(\pi(x))=\{x\}\}$.
\end{defn}
\begin{remarks} 
(i) The collapsing set of a finite quasi-covering is a closed set.

Let $\pi:X\to Y$ be a finite quasi-covering. By Lemma \ref{disting} the cardinality of the fibers at the points close to a given point $y\in Y$ is greater than or equal to the cardinality of $\pi^{-1}(y)$. Thus, the set $S$ of points whose fiber contains at least two points is an open subset of $Y$ and ${\mathcal C}_\pi=\pi^{-1}(X\setminus S)$ is a closed subset of $X$. 

(ii) If $\pi:X\to Y$ is a $d$-branched covering, its collapsing set is $\Cc_{\pi}=\{x\in X:\ b_\pi(x)=d\}$.\qed
\end{remarks}

We will need also the following result.
\begin{lem}\label{colapseinB}
Let $\pi:X\to Y$ be a finite quasi-covering. Let $C\subset X$ and $D\subset Y$ be such that $\pi|_C:C\to D$ is a $d$-branched covering for some $d>1$. Then $\mathcal{C}_{\pi|_C}\subset \mathcal{B}_\pi$.
\end{lem}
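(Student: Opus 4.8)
The plan is to deduce the statement from the two inclusions $\mathcal{C}_{\pi|_C}\subseteq\mathcal{B}_{\pi|_C}$ and $\mathcal{B}_{\pi|_C}\subseteq\mathcal{B}_\pi$, which I would establish separately and then concatenate.

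\textbf{Step 1: $\mathcal{C}_{\pi|_C}\subseteq\mathcal{B}_{\pi|_C}$.} Here I would fix $x\in\mathcal{C}_{\pi|_C}$ and set $y:=\pi(x)$, so that $(\pi|_C)^{-1}(y)=\{x\}$. Applying Remark \ref{cuenta}(i) to $\pi|_C$ and a special neighborhood of $y$, the number of sheets $d_y:=\sum_{x'\in(\pi|_C)^{-1}(y)}b_{\pi|_C}(x')$ equals the common cardinality $d$ of the fibers of $\pi|_C$ over $D\setminus\mathcal{R}_{\pi|_C}$ (recall $\pi|_C$ is a $d$-branched covering); since the sum runs over the single point $x$, this forces $b_{\pi|_C}(x)=d>1$, hence $x\in\mathcal{B}_{\pi|_C}$ by Remark \ref{cuenta}(v). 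Equivalently, this is the description of the collapsing set of a $d$-branched covering recorded right after its definition. This is the only place where the hypothesis $d>1$ enters, and it cannot be removed: for $d=1$ one has $\mathcal{C}_{\pi|_C}=C$, which need not lie in $\mathcal{B}_\pi$ (take $\pi=\pi|_C$ a homeomorphism).

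\textbf{Step 2: $\mathcal{B}_{\pi|_C}\subseteq\mathcal{B}_\pi$.} I would argue this by contraposition, using only that a finite quasi-covering, hence $\pi|_C:C\to D$, is an open map. Let $x\in C$ be such that $\pi$ is a local homeomorphism at $x$, and pick an open $U\subset X$ with $x\in U$, $\pi(U)$ open in $Y$ and $\pi|_U:U\to\pi(U)$ a homeomorphism. Then $\Omega:=C\cap U$ is an open neighborhood of $x$ in $C$; the restriction $\pi|_\Omega$ is injective (being a restriction of $\pi|_U$) and continuous, and it is open as a map into $D$, because any subset of $\Omega$ open in $\Omega$ is open in $C$ (as $\Omega$ is open in $C$), so its image is open in $D$ by openness of $\pi|_C$. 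In particular $\pi(\Omega)$ is open in $D$ and $\pi|_\Omega:\Omega\to\pi(\Omega)$ is a continuous open bijection, i.e. a homeomorphism; thus $\pi|_C$ is a local homeomorphism at $x$ and $x\notin\mathcal{B}_{\pi|_C}$. Combining the two steps gives $\mathcal{C}_{\pi|_C}\subseteq\mathcal{B}_{\pi|_C}\subseteq\mathcal{B}_\pi$, as required.

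I do not anticipate a genuine obstacle. The one point calling for a little care is not to treat the inclusion $\mathcal{B}_{\pi|_C}\subseteq\mathcal{B}_\pi$ as automatic: it is exactly what Step 2 supplies, and it rests precisely on the openness built into the definition of a finite quasi-covering (the reverse inclusion is false in general and is not needed).
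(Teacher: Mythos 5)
Your proof is correct and is essentially the same argument as the paper's, just reorganized: the paper runs a single contradiction argument, while you split it into the two inclusions $\mathcal{C}_{\pi|_C}\subseteq\mathcal{B}_{\pi|_C}$ and $\mathcal{B}_{\pi|_C}\subseteq\mathcal{B}_\pi$; the paper's step showing $\pi|_{W\cap C}$ is an open continuous bijection is exactly your Step 2, and the paper's use of $b_{\pi|_C}(x)=d$ on the collapsing set (via Remark \ref{cuenta}(v) and the characterization $\mathcal{C}_\pi=\{b_\pi=d\}$) is exactly your Step 1.
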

\begin{proof}
Let $x\in \mathcal{C}_{\pi|_C}$ and suppose that $x\notin \mathcal{B}_\pi$. Then there exists an open neighborhood $W$ of $x$ such that $\pi|_W:W\to \pi(W)$ is a homeomorphism. As $\pi|_C:C\to D$ is an open continuous map, $\pi|_{W\cap C}:W\cap C\to\pi(W\cap C)$ is an open continuous bijective map, so it is a homeomorphism and $\pi(W\cap C)$ is an open subset of $D$. Thus, by Remark \ref{cuenta}(v) we deduce $b_{\pi|_C}(x)=1$. As $x\in \mathcal{C}_{\pi|_C}$, we conclude $1=b_{\pi|_C}(x)=d$, which is a contradiction.
\end{proof}

\subsection{Semialgebraic branched coverings} 
As one can expect a \em semialgebraic finite quasi-cover\-ing \em is a finite quasi-covering that is in addition a semialgebraic map. As semialgebraic sets are Hausdorff spaces, we deduce the following from Lemma \ref{disting}.

\begin{cor}\label{semiquasi}
Let $\pi:M\to N$ be an open, closed, surjective semialgebraic map with finite fibers between semialgebraic sets $M$ and $N$. Then $\pi$ is a finite quasi-covering and each point $y\in N$ admits a basis of distinguished semialgebraic neighborhoods with respect to $\pi$. 
\end{cor}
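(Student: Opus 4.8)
The plan is to treat the two assertions separately, both of which become elementary once we recall that a semialgebraic set, with the Euclidean topology, is metrizable (in particular Hausdorff) and that the image of a semialgebraic set under a semialgebraic map is again semialgebraic (Tarski--Seidenberg).

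First I would check that $\pi$ is a finite quasi-covering. The only clause of the definition not granted by hypothesis is separatedness, and this is automatic: given two distinct points $x,x'$ in a common fiber $\pi^{-1}(y)$, since $M\subset\R^m$ is Hausdorff they admit disjoint open neighborhoods in $M$. Together with the assumptions that $\pi$ is open, closed, surjective and has finite fibers, this shows $\pi$ is a finite quasi-covering.

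Next, to produce a basis of distinguished semialgebraic neighborhoods of a point $y\in N$, I would revisit the construction inside the proof of Lemma \ref{disting} rather than quote its statement as a black box. Writing $\pi^{-1}(y)=\{x_1,\dots,x_r\}$, I would choose the pairwise disjoint open neighborhoods $W^{x_1},\dots,W^{x_r}$ of $x_1,\dots,x_r$ to be semialgebraic (e.g. intersections with $M$ of small open Euclidean balls). Then the open neighborhood $V_0=\big(N\setminus\pi(M\setminus\bigcup_{j}W^{x_j})\big)\cap\bigcap_{j}\pi(W^{x_j})$ constructed there is semialgebraic, being built from $M$, the $W^{x_j}$ and $\pi$ by finitely many Boolean operations and direct images. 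By Lemma \ref{disting}, every open $V\subset V_0$ containing $y$ is a distinguished neighborhood of $y$, with characteristic neighborhoods $W^{x_j}\cap\pi^{-1}(V)$, which are semialgebraic as soon as $V$ is. Since the sets $B(y,\varepsilon)\cap V_0$, where $B(y,\varepsilon)$ denotes the open Euclidean ball of radius $\varepsilon$ about $y$, form a basis of semialgebraic neighborhoods of $y$, all contained in $V_0$, they constitute a basis of distinguished semialgebraic neighborhoods of $y$ with respect to $\pi$.

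I do not expect a genuine obstacle: the substance is that Hausdorffness yields separatedness for free and that Tarski--Seidenberg keeps every set in sight semialgebraic. The one point requiring a little care is precisely tracking semialgebraicity through the explicit formula for $V_0$ occurring in the proof of Lemma \ref{disting}, which is why I would reprove that step rather than merely invoke the lemma.
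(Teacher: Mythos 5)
Your proof is correct and follows exactly the route the paper intends: the paper gives no written proof, merely remarking that the corollary "follows from Lemma \ref{disting}" together with the Hausdorffness of semialgebraic sets, and your argument is precisely that deduction spelled out, with the (appropriate) extra care of retracing the explicit formula for $V_0$ and invoking Tarski--Seidenberg to keep everything semialgebraic.
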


Concerning the branching locus and ramification set we have the following.

\begin{lem}\label{ss} 
Let $\pi:M\to N$ be a semialgebraic finite quasi-covering.
\begin{itemize}
\item[(i)] The cardinality of the fibers of $\pi$ is bounded by a common constant.
\item[(ii)] $\mathcal{B}_\pi,M_{\reg},\mathcal{C}_{\pi}\subset M$ and ${\mathcal R}_{\pi}\subset N$ are semialgebraic sets.
\end{itemize}
\end{lem}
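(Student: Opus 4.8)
The plan is to exploit that semialgebraic sets and maps are first-order definable in the language of ordered fields, so that all the sets in the statement can be written by explicit (bounded-quantifier) formulas and hence are semialgebraic by Tarski--Seidenberg. First I would prove (i). Since $\pi:M\to N$ is semialgebraic, the finiteness of fibers together with the definability of the function $y\mapsto\#(\pi^{-1}(y))$ (the fiber cardinality of a semialgebraic family is a semialgebraic function of the parameter, and it takes finitely many values) forces $\sup_{y\in N}\#(\pi^{-1}(y))<+\infty$; equivalently, the sets $N_k:=\{y\in N:\#(\pi^{-1}(y))\geq k\}$ are semialgebraic and form a decreasing chain, which must stabilize at $\varnothing$ because otherwise one obtains a semialgebraic subset of $N$ over which the fibers are infinite, contradicting finiteness. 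One may also invoke here the standard fact that a semialgebraic map with finite fibers has uniformly bounded fiber cardinality (e.g.\ via cell decomposition of the graph relative to the target), so $d:=\sup\{\#(\pi^{-1}(y)):y\in N\}$ is a well-defined positive integer; I would cite this rather than reprove it.

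Next I would handle (ii), treating each set in turn by writing down a defining formula. For $\mathcal{B}_\pi$: a point $x\in M$ lies in $M\setminus\mathcal{B}_\pi$ iff $\pi$ is a local homeomorphism at $x$, and since $M$ and $N$ are locally closed semialgebraic sets and balls form a semialgebraic basis of neighborhoods, ``there exist semialgebraic open neighborhoods $U\ni x$, $V\ni\pi(x)$ with $\pi|_U:U\to V$ a homeomorphism'' can be encoded as a first-order formula with the neighborhoods ranging over, say, intersections of $M$ and $N$ with rational-radius balls (using that a continuous semialgebraic bijection between semialgebraic sets is a homeomorphism onto its image, which is itself a definable condition). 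Hence $\mathcal{B}_\pi$ is semialgebraic; its image $\mathcal{R}_\pi=\pi(\mathcal{B}_\pi)\subset N$ is then semialgebraic because the image of a semialgebraic set under a semialgebraic map is semialgebraic (Tarski--Seidenberg). Consequently $M_{\reg}=M\setminus\pi^{-1}(\mathcal{R}_\pi)$ is semialgebraic, as preimages of semialgebraic sets under semialgebraic maps are semialgebraic. Finally $\mathcal{C}_\pi=\{x\in M:\pi^{-1}(\pi(x))=\{x\}\}=\{x\in M:\forall x'\in M\,(\pi(x')=\pi(x)\Rightarrow x'=x)\}$ is visibly defined by a first-order formula over the semialgebraic data $M$ and $\mathrm{graph}(\pi)$, hence semialgebraic.

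The main obstacle is the treatment of $\mathcal{B}_\pi$: one must be careful that ``$\pi$ is a local homeomorphism at $x$'' really is a first-order condition, i.e.\ that it suffices to quantify over a definable basis of neighborhoods (balls of rational radius intersected with $M$, resp.\ $N$) rather than over all open sets, and that ``being a homeomorphism'' of a semialgebraic map between two semialgebraic sets is definable --- this reduces to injectivity plus openness of the restriction, both first-order, using that a continuous semialgebraic injection is an open map onto its image precisely when its image is open, which is again checkable by a formula. Once this point is granted, everything else is a routine application of Tarski--Seidenberg; alternatively, for $\mathcal{B}_\pi$ and $M_{\reg}$ one can appeal directly to the known semialgebraic triviality/stratification of the map $\pi$, along whose open strata $\pi$ is a covering, so that $\mathcal{B}_\pi$ is a finite union of (semialgebraic) strata.
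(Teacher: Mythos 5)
Your plan for (i) and for $\mathcal{R}_\pi$, $M_{\reg}$, $\mathcal{C}_\pi$ is essentially what the paper does: (i) is cell decomposition, and the remaining three sets follow from Tarski--Seidenberg once $\mathcal{B}_\pi$ is known to be semialgebraic. The handling of $\mathcal{B}_\pi$ is where you diverge from the paper and where your argument has a genuine gap.

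You try to encode ``$\pi$ is a local homeomorphism at $x$'' by quantifying over small balls and checking that the restriction is a homeomorphism, reducing the latter to ``injectivity plus openness of the restriction,'' and you then assert that a continuous semialgebraic injection is open onto its image precisely when its image is open. That last claim is false in general: take $M=(\R\times\{0\})\cup\{(2,0)\}\subset\R^2$ and $g:M\to\sph^1$ sending $\R\times\{0\}$ to $\sph^1\setminus\{(-1,0)\}$ by the rational parametrization and $(2,0)$ to $(-1,0)$; then $g$ is a continuous semialgebraic bijection onto the open set $\sph^1$ but is not an open map, since $g(\{(2,0)\})$ is not open. Invariance of domain, which would rescue the claim, is not available because $M$ and $N$ are arbitrary semialgebraic sets, not open subsets of $\R^m$.

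The paper avoids all of this by exploiting the hypothesis that $\pi$ is already an open map: for any open $U\subset M$, $\pi|_U:U\to\pi(U)$ is automatically open and continuous (Lemma \ref{trivial}(i)), so it is a homeomorphism if and only if it is injective. Hence $x\notin\mathcal{B}_\pi$ iff there is an open semialgebraic neighborhood $U$ of $x$ on which $\pi$ is injective, i.e.\ $\exists\,\veps>0\,\forall\,x',x''\in M\,(\|x'-x\|<\veps\wedge\|x''-x\|<\veps\wedge\pi(x')=\pi(x'')\Rightarrow x'=x'')$, a clean first-order formula. You should replace your detour through ``being a homeomorphism is definable'' with this observation; once that is done the rest of your argument for (ii) goes through exactly as written. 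Your alternative suggestion via Hardt trivialization would also work but requires an additional argument that $\mathcal{B}_\pi$ is a union of strata, which is not immediate.
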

\begin{proof}
(i) This follows from cell decomposition of semialgebraic sets \cite[Corollary 3.7]{dries}. 

(ii) A point $x\in\mathcal{B}_\pi$ if and only if the restriction $\pi|_U:U\to\pi(U)$ is not a homeomorphism for each open semialgebraic neighborhood $U\subset M$ of $x$. As $\pi$ is open, continuous and surjective, we deduce $x\in\mathcal{B}_\pi$ if and only if the restriction $\pi|_U$ is not injective on each open semialgebraic neighborhood $U\subset M$ of $x$. Consequently, ${\mathcal B}_{\pi}$ is a semialgebraic subset of $M$, so ${\mathcal R}_{\pi}=\pi({\mathcal B}_{\pi})$ is a semialgebraic subset of $N$ and $M_{\reg}:=M\setminus\pi^{-1}({\mathcal R}_{\pi})$ is a semialgebraic subset of $M$. The set $\mathcal{C}_{\pi}=\{x\in M:\ \pi^{-1}(x)=\{x\}\}$ is also semialgebraic, as required.
\end{proof}

A \em semialgebraic branched covering \em is a map $\pi:M\to N$ that is simultaneously a branched covering and a semialgebraic map. Lemma \ref{indexwell}, Corollary \ref{rcc} and Lemma \ref{sc} apply readily in the semialgebraic case. In order to show some subtleties hidden in the definition of branched covering we provide next an example of a semialgebraic finite quasi-covering that is not a semialgebraic branched covering. 

\begin{examples}\label{notbranched} 
(i) Consider the semialgebraic subsets of $\R^2$ defined by
\begin{equation*}
\begin{split}
M_1:&=([-2,0]\times\{3/2\})\cup\Big\{(x,y)\in\R^2:\ 0\leq x\leq 2,\ y=\frac{3\pm x}{2}\Big\},\\
M_2:&=([0,2]\times\{3\})\cup\Big\{(x,y)\in\R^2:\ -2\leq x\leq 0,\ y=\frac{6\pm x}{2}\Big\}
\end{split}
\end{equation*}
and $N:=[-2,2]\times\{0\}$. The projection $\pi:M:=M_1\cup M_2\to N,\ (x,y)\mapsto x$ is a semialgebraic finite quasi-covering, but it is not a semialgebraic branched covering. The branching locus of $\pi$ is $\mathcal{B}_\pi:=\{p_1:=(0,3/2),p_2:=(0,3)\}$, so the ramification set is ${\mathcal R}_{\pi}:=\pi({\mathcal B}_{\pi})=\{q:=(0,0)\}$. We have $\#(\pi^{-1}(q))=2$ and $\#(\pi^{-1}(y))=3$ for each point $y\in N\setminus\{q\}$.

\begin{center}
\begin{figure}[ht]
\begin{tikzpicture}[scale=1]
\draw (2,5.3) node{$M_2$};
\draw (2,1.7) node{$M_1$};
\draw (2,0.3) node{$N$};
\begin{axis}[unit vector ratio=1.25 1.25 1.25,axis x line=middle,
	axis y line=middle]	
\addplot[domain=0:2,blue,line width=1.5pt,smooth]{(3+x)/2};
\addplot[domain=0:2,blue,line width=1.5pt,smooth]{(3-x)/2};
\addplot[domain=0:2,blue,line width=1.5pt,smooth]{3};
\addplot[domain=-2:0,blue,line width=1.5pt,smooth]{(6+x)/2};
\addplot[domain=-2:0,blue,line width=1.5pt,smooth]{(6-x)/2};
\addplot[domain=-2:0,blue,line width=1.5pt,smooth]{3/2};
\addplot[domain=-2:2,red,line width=3pt,smooth]{0};
\addplot[soldot] coordinates{(0,1.5)(0,3)(0,0)};
\end{axis}
\end{tikzpicture}
\caption{Projection $\pi:M:=M_1\cup M_2\to N$}
\end{figure}
\end{center}

The regular locus of $\pi$ is the dense subset $M_{\reg}=M\setminus\{{p_1,p_2}\}$ of $M$. Suppose that $\pi$ is a semialgebraic branched covering. Then there exists a distinguished open semialgebraic neighborhood $V$ of $q$ in $N$ and open semialgebraic neighborhoods $U_i$ of $p_i$ in $M$ such that the restriction $\pi|_{M_{\reg}\cap U_i}:M_{\reg}\cap U_i\to (N\setminus{\mathcal R}_{\pi})\cap V$ is a semialgebraic unbranched covering for $i=1,2$. This is false because
\begin{itemize}
\item the cardinality of the fibers of $\pi|_{M_{\reg}\cap U_1}$ at the points of $(N\setminus{\mathcal R}_{\pi})\cap V\cap\{x<0\}$ is $1$, 
\item the cardinality of the fibers of $\pi|_{M_{\reg}\cap U_1}$ at the points of $(N\setminus{\mathcal R}_{\pi})\cap V\cap\{x>0\}$ is $2$. 
\end{itemize}

(ii) A similar pathology appears in the (more restrictive) real algebraic case if one considers
$$
X:=\{x=(y-2)^2\}\cup\{x=-(y+2)^2\}\cup\{y=2\}\cup\{y=-2\},\quad Y:=\{y=0\}
$$
and $\pi:X\to Y,\ (x,y)\mapsto x$. The previous map is a finite quasi-covering, the general fiber has $4$ points, but it is not a branched covering. 
\end{examples}

\begin{prop}[Semialgebraic ramification index]
Let $\pi:M\to N$ be a semialgebraic branched covering. The ramification index function $b_\pi:M\to\N\subset\R$ has semialgebraic graph.
\end{prop}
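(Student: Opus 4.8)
The plan is to reduce the statement to the semialgebraicity of the level sets of $b_\pi$. By Remark \ref{cuenta}(iv) and Lemma \ref{ss}(i) the function $b_\pi$ is bounded: for an exceptional neighborhood $U$ of $x$ one has $b_\pi(x)=\#(\pi^{-1}(y)\cap U)\le\#(\pi^{-1}(y))\le d$, where $d$ is the common bound for the cardinality of the fibers of $\pi$. Hence the graph of $b_\pi$ equals the finite union $\bigcup_{e=1}^{d}\big(\{b_\pi=e\}\times\{e\}\big)$, and since $\{b_\pi=e\}=\{b_\pi\le e\}\setminus\{b_\pi\le e-1\}$ (with $\{b_\pi\le 0\}=\varnothing$), it suffices to prove that $\{x\in M:\ b_\pi(x)\le e\}$ is a semialgebraic subset of $M$ for every $e\ge 1$.

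The key step is a local characterization of this set. Denoting by $B(x,\delta)$ the open ball of $\R^m$ of center $x$ and radius $\delta$, I claim: \emph{$b_\pi(x)\le e$ if and only if there is $\delta>0$ such that $\#(\pi^{-1}(y)\cap B(x,\delta)\cap M)\le e$ for every $y\in N$.} For the ``only if'' direction I would take an exceptional neighborhood $U$ of $x$; by Remark \ref{cuenta}(iv) we have $\#(\pi^{-1}(y)\cap U)\le b_\pi(x)\le e$ for all $y\in N$, and since $U$ is open it contains $B(x,\delta)\cap M$ for $\delta$ small enough, so that $\#(\pi^{-1}(y)\cap B(x,\delta)\cap M)\le e$. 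For the ``if'' direction, given such a $\delta$ I would use Remark \ref{cuenta}(iii) to pick an exceptional neighborhood $U\subset B(x,\delta)\cap M$ of $x$; then for any $y\in\pi(U)\setminus{\mathcal R}_{\pi|_U}$ we get $b_\pi(x)=\#(\pi^{-1}(y)\cap U)\le\#(\pi^{-1}(y)\cap B(x,\delta)\cap M)\le e$. Note that the openness of $U$ is all that is used here; no semialgebraicity of $U$ is required.

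Granting this characterization, the conclusion follows from the Tarski--Seidenberg principle, because the displayed condition is first order in $x$. Indeed, using that the graph of $\pi$ is semialgebraic, the requirement ``$\#(\pi^{-1}(y)\cap B(x,\delta)\cap M)\le e$ for all $y\in N$'' is expressed by
\[
\forall\, z_0,\dots,z_e\in M\colon\ \Big(\textstyle\bigwedge_{i=0}^{e}\|z_i-x\|<\delta\ \wedge\ \bigwedge_{i=1}^{e}\pi(z_i)=\pi(z_0)\Big)\ \Longrightarrow\ \bigvee_{0\le i<j\le e}z_i=z_j,
\]
where $\pi(z_i)=\pi(z_0)$ abbreviates $\exists\, w\ (z_i,w),(z_0,w)\in\operatorname{graph}(\pi)$; prefixing this formula by $\exists\,\delta>0$ keeps it first order. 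Hence $\{b_\pi\le e\}$ is definable, i.e.\ semialgebraic, and the argument of the first paragraph finishes the proof.

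The only genuinely non-formal ingredient is the local characterization, whose content is already packaged in Remark \ref{cuenta}(iii)--(iv); the point to be careful about is to phrase the existential quantifier over neighborhoods as a quantifier over balls $B(x,\delta)\cap M$, so that it becomes an honest first-order quantifier over the real parameter $\delta$. After that, the statement is a routine application of Tarski--Seidenberg, so I do not expect a serious obstacle.
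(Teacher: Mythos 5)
Your proof is correct and follows essentially the same route as the paper: reduce to the semialgebraicity of the level sets of $b_\pi$ (the paper works with $\{b_\pi\geq k\}$ and the complementary ``$\forall\varepsilon>0,\ \exists\, k$ distinct $\varepsilon$-close preimages'' formula, you work with $\{b_\pi\leq e\}$ and the complementary ``$\exists\delta>0,\ \neg\exists\, e{+}1$ distinct $\delta$-close preimages'' formula), justified locally via Remark \ref{cuenta}(iii)--(iv), and then apply Tarski--Seidenberg. Your version spells out the verification that the first-order formula indeed captures the level set, which the paper leaves implicit.
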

\begin{proof}
Let $d:=\max\{\#(\pi^{-1}(y)):\ y\in N\}<+\infty$. Observe that $b_\pi(M)\subset\{1,\ldots,d\}$. For each $k=1,\ldots,d$ define ${\mathcal B}_{\pi,k}=\{x\in M:\ b_\pi(x)=k\}$, ${\mathcal B}_{\pi,k}^*=\{x\in M:\ b_\pi(x)\geq k\}$ and ${\mathcal B}_{\pi,d+1}^*=\varnothing$. As ${\mathcal B}_{\pi,k}={\mathcal B}_{\pi,k}^*\setminus{\mathcal B}_{\pi,k+1}^*$ for $k=1,\ldots,d$, to prove that $b_\pi$ is a semialgebraic map, it is enough to check: \em ${\mathcal B}_{\pi,k}^*$ is a semialgebraic set for each $k=1,\ldots,d$\em.

It holds
\begin{multline*}
{\mathcal B}_{\pi,k}^*=\{x\in M:\ \forall\veps>0,\ \exists u_1,\ldots,u_k\in M,\ \|x-u_i\|<\veps\\ 
(\forall i=1,\ldots,k),\ u_i\neq u_j,\ \pi(u_1)=\cdots=\pi(u_k)\},
\end{multline*}
so it is described by a first order formula and it is a semialgebraic set, as required.
\end{proof}

\section{Branched coverings and spectral maps}
In this section we analyze the properties of the spectral maps associated to semialgebraic finite quasi-coverings and branched coverings. These results will be applied in Sections \ref{mu} and \ref{s6}. One of the main results of this section is Proposition \ref{sep2}, where we prove that if $\pi:M\to N$ is a semialgebraic branched covering, the spectral map $\Specd(\pi)$ is a finite quasi-covering.

\subsection{Preliminaries on rings of semialgebraic functions}\label{prelim} 
Let $M\subset\R^m$ be a semialgebraic set and let $f\in{\mathcal S}(M)$. We denote $\Zn(f):=\{x\in M:\ f(x)=0\}$ and $\Dn(f):=M\setminus\Zn(f)$. If $N$ is a closed semialgebraic subset of $M$, the semialgebraic function $g:=\dist(\cdot,N)\in{\mathcal S}(M)$ satisfies $Z(g)=N$. In fact, after replacing $g$ by $\frac{g}{1+g^2}$ we may assume in addition that $g$ is bounded. The restriction homomorphism ${\mathcal S}^\diam(M)\to{\mathcal S}^\diam(N),\ f\mapsto f|_N$ is by \cite[Thm.3]{dk1} surjective. Denote
\begin{equation}
\begin{split}
\Zzd(f):&=\{\gtp\in\Specd(M):\ f\in\gtp\},\\
\Ddd(f):&=\Specd(M)\setminus\Zzd(f)=\{\gtp\in\Specd(M):\ f\notin\gtp\}.
\end{split} 
\end{equation}
The semialgebraic set $M$ is identified with a dense subspace of $\Specd(M)$ via the injective map ${\tt j}_M:M\hookrightarrow\Specd(M),\ x\mapsto\gtm^\diam_x$, where $\gtm^\diam_x:=\{f\in{\mathcal S}^\diam(M):\ f(x)=0\}$ is the maximal ideal of ${\mathcal S}^\diam(M)$ associated to $x$. In particular, $\Zn(f)=M\cap\Zzd(f)$ and $\Dn(f)=M\cap\Ddd(f)$. Thus, ${\tt j}_M$ is an embedding.

We denote $\betad(M)\subset\Specd(M)$ the set of maximal ideals of ${\mathcal S}^\diam(M)$. As $\gtm_x\in\betad(M)$ for each point $x\in M$, we have $M\subset\betad(M)$. Denote $\partial M:=\betad(M)\setminus M$ and recall that $\betad(M)$ is a compact Hausdorff space \cite[Prop.7.1.25]{bcr}. Schwartz proved in \cite[III.\S1]{s5} that ${\mathcal S}^\diam(M)$ is a real closed ring. Thus, its Zariski spectrum is homeomorphic to its real spectrum (Coste--Roy \cite{cr}) via the support map $\Spec_r({\mathcal S}^\diam(M))\to\Specd(M),\alpha\mapsto\alpha\cap(-\alpha)$. The set of specializations of a point in a real spectrum constitutes a chain \cite[Prop.7.1.23]{bcr}, so the same holds for the set of prime ideals in ${\mathcal S}^\diam(M)$ that contains a given prime ideal. Consequently, ${\mathcal S}^\diam(M)$ is a Gelfand ring \cite{cc}, that is, each prime ideal $\gtp\in\Specd(M)$ is contained in a unique maximal ideal $\gtm\in\betad(M)$. This provides a natural retraction ${\tt r}_M:\Specd(M)\to\betad(M)$, which is continuous \cite[Thm.1.2]{mo}. 

If $\pi:M\to N$ is a semialgebraic map, the induced maps 
\begin{align*}
&\Specd(\pi):\Specd(M)\to\Specd(N),\ \gtp\mapsto\varphi_{\pi}^{-1}(\gtp),\\
&\betad(\pi):={\tt r}_N\circ\Specd(\pi)|_{\betad(M)}:\betad(M)\to\Specd(N)\to\betad(N)
\end{align*} 
are continuous, $\Specd(\pi)|_{M}=\pi$ and $\betad(\pi)|_{M}=\pi$. We recall here the following result. 

\begin{lem}\label{propern} 
Let $\pi:M\to N$ be a semialgebraic map.
\begin{itemize} 
\item[(i)] If $\pi$ is open, closed and surjective, then $\Specd(\pi)$ is open, closed and surjective.
\item[(ii)] Suppose that $\pi$ is surjective. Then it is proper if and only if $\Specd(\pi)(\partial M)=\partial N$.
\end{itemize} 
\end{lem}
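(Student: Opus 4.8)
The plan is to prove Lemma \ref{propern} in two parts, relying on the structural facts about $\Specd(M)$ and $\betad(M)$ recalled in Subsection \ref{prelim}, together with the fact that $M$ is dense in $\Specd(M)$ and that $\betad(M)$ is compact Hausdorff.

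For part (i), I would first recall that $\Specd(\pi)$ is the spectral map induced by $\varphi_\pi^\diam:{\mathcal S}^\diam(N)\to{\mathcal S}^\diam(M)$. For surjectivity: since $\pi$ is surjective and $M$ is dense in $\Specd(M)$, $N$ is dense in $\Specd(N)$, and $\Specd(\pi)(\Specd(M))$ is a closed subset of $\Specd(N)$ containing $N=\pi(M)$ (closed because spectral maps induced by ring homomorphisms between suitable rings — e.g.\ when $\varphi_\pi^\diam$ makes ${\mathcal S}^\diam(M)$ integral, or using the general closedness argument for rings of semialgebraic functions from \cite{fg3,s2} cited in the introduction) is closed; hence it is all of $\Specd(N)$. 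Alternatively, surjectivity of $\Specd(\pi)$ is equivalent to $\varphi_\pi^\diam$ being injective modulo nilpotents composed with lying-over type arguments, but the cleanest route is to invoke that the closure in $\Specd(N)$ of $\pi(M)$ is the image of the closure of $M$, which is everything. For openness and closedness I would appeal to the results quoted in the paper: the openness, closedness and surjectivity of $\Specd(\pi)$ "follow from \cite{fg3,s2}" as stated right after Theorem \ref{bc}; so here I would simply cite those references (Schwartz \cite{s2} for flatness/openness-type statements and \cite{fg3} for the closedness of spectral maps of closed semialgebraic maps), since the lemma is explicitly labeled as "recalled."

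For part (ii), recall $\partial M=\betad(M)\setminus M$ and $\partial N=\betad(N)\setminus N$. The plan: $\pi$ proper means $\pi$ is closed with compact (here finite, in combination with the other hypotheses, but in general just compact) fibers; in the semialgebraic setting properness of a continuous semialgebraic map is equivalent to the preimage of every compact being compact, equivalently to $\pi$ extending continuously to the one-point / Stone–Čech-type compactifications compatibly. The key observation is that $\betad(M)$ is precisely the (semialgebraic) Stone–Čech compactification of $M$ in the category at hand, and $\betad(\pi)$ extends $\pi$. So $\pi$ is proper if and only if $\betad(\pi)$ maps the "boundary at infinity" $\partial M$ into $\partial N$: if some point $\gtm\in\partial M$ had $\betad(\pi)(\gtm)\in N$, one could produce a sequence (net/curve) in $M$ escaping to infinity whose image stays near a point of $N$, contradicting properness (preimage of a compact neighborhood would be non-compact); conversely if $\pi$ is not proper, there is a point $y\in N$ and a net in $M$ escaping every compact set with image converging to $y$, whose limit point in the compact space $\betad(M)$ lies in $\partial M$ and maps under $\betad(\pi)$ to $y\in N$. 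To phrase the "$\Specd(\pi)(\partial M)=\partial N$" version rather than the $\betad$ version, I would use the retraction ${\tt r}_N$ and the relation $\betad(\pi)={\tt r}_N\circ\Specd(\pi)|_{\betad(M)}$, noting $\Specd(\pi)$ maps $\betad(M)$ into $\betad(N)$ up to the retraction and that maximal ideals lying over $N$ correspond to $N$ since closed points of $\Specd(N)$ over points of $N$ are the $\gtm_y^\diam$. Since the lemma says "we recall here the following result," I expect this is a citation to one of \cite{fg1,fg2,fg3} or \cite{fe1,fe2,fe3}; so concretely I would cite the appropriate one of those for the precise statement rather than reprove it from scratch.

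The main obstacle is the precise bookkeeping in part (ii) relating $\Specd(\pi)(\partial M)$ (a statement purely about the Zariski spectra) to the more transparent statement about $\betad(\pi)$ and the semialgebraic compactification; one must be careful that $\Specd(\pi)$ does not directly send $\betad(M)$ into $\betad(N)$ (it lands in $\Specd(N)$, and one composes with ${\tt r}_N$), so "$\Specd(\pi)(\partial M)=\partial N$" should be read using that $\Specd(\pi)(\gtm)\in\betad(N)\setminus N$ iff ${\tt r}_N(\Specd(\pi)(\gtm))\in\partial N$ — which holds because the only closed point specializing from a prime lying over $N\subset\Specd(N)$ is again over $N$, as $N$ consists of closed points already. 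Given that the paper explicitly frames both parts as recalled facts (the post-Theorem remark attributes them to \cite{fg3,s2} and this lemma to the existing literature), the honest proof is: part (i) by \cite{fg3,s2}, and part (ii) by the cited characterization of properness via the semialgebraic boundary (e.g.\ \cite{fg1} or \cite{fe1}), with the short argument above sketched for the reader's convenience.
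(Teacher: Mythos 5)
Your proposal is correct and takes exactly the approach the paper does: the paper's entire proof is the one-line citation ``Statement (i) was proved in \cite[Ch.6]{s2}, whereas (ii) was shown in \cite[Rem.4.2]{fg6},'' and you correctly identified that this lemma is a recalled fact to be referenced rather than reproved. Your guess at the reference for (ii) (\cite{fg1} or \cite{fe1}) misses the actual source, which is \cite[Rem.4.2]{fg6} on the remainder of the semialgebraic Stone--\v{C}ech compactification, but this does not affect the substance of your answer.
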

\begin{proof}
Statement (i) was proved in \cite[Ch.6]{s2}, whereas (ii) was shown in \cite[Rem.4.2]{fg6}.
\end{proof}

Observe that ${\mathcal W}_M:=\{f\in{\mathcal S}^*(M):\ Z(f)=\varnothing\}$ is a multiplicatively closed subset of ${\mathcal S}^*(M)$ and ${\mathcal S}(M)={\mathcal W}_M^{-1}{\mathcal S}^*(M)$. This is so because each $f\in{\mathcal S}(M)$ can be written as $f=g/h$, where
$$
g:=\frac{f}{1+f^2}\in{\mathcal S}^*(M)\quad\text{and}\quad h:=\frac{1}{1+f^2}\in{\mathcal W}_M.
$$
As ${\mathcal S}(M)={\mathcal W}_M^{-1}{\mathcal S}^*(M)$, there exists a bijection, which is in fact a homeomorphism,
\begin{align*}
\phi&:\Specs(M)\to{\mathfrak S}(M),\ \gtq\mapsto\gtq\cap{\mathcal S}^*(M),\\
\phi^{-1}&:{\mathfrak S}(M)\to\Specs(M),\ \gtq'\mapsto{\mathcal W}_M^{-1}\gtq'
\end{align*}
that preserves inclusions between prime ideals, where ${\mathfrak S}(M)$ is the set of prime ideals of ${\mathcal S}^*(M)$ that do not meet ${\mathcal W}_M$.

\begin{remark}\label{r:maximal} As it happens with rings of continuous functions \cite[Ch.\,7]{gj}, the comparison of the Zariski spectra of ${\mathcal S}(M)$ and $\mathcal{S}^*(M)$ provides a homeomorphism between their maximal spectra (see \cite[\S 10]{t} for a extended version of the Gelfand-Kolmogorov theorem). Explicitly, this homeomorphism is the map $\beta(M)\mapsto\beta^*(M)$ that maps each maximal ideal $\gtm$ of ${\mathcal S}(M)$ to the unique maximal ideal $\gtm^*$ of $\mathcal{S}^*(M)$ that contains the prime $\gtm\cap\mathcal{S}^*(M)$. 

In addition, by \cite[Prop.5.1]{fe1} we have: \em Let $\gtp\in\Spec^*(M)$ be a prime ideal. If $\gtm^*\in\beta^*(M)$ is the unique maximal ideal of ${\mathcal S}^*(M)$ that contains $\gtp$ and $\gtm\in\beta(M)$ is the unique maximal ideal of ${\mathcal S}(M)$ such that $\gtm\cap\mathcal{S}^*(M)\subset\gtm^*$, then either $\gtp\subset\gtm\cap\mathcal{S}^*(M)$ or $\gtm\cap\mathcal{S}^*(M)\subset\gtp$\em.\qed
\end{remark}

Given a semialgebraic map $\pi:M\to N$ the induced homomorphism $\varphi_{\pi}:{\mathcal S}(N)\to{\mathcal S}(M)$ maps ${\mathcal S}^*(N)$ to ${\mathcal S}^*(M)$ and we denote $\varphi^*_{\pi}:=\varphi_{\pi}|_{{\mathcal S}^*(N)}:{\mathcal S}^*(N)\to{\mathcal S}^*(M)$ the restricted homomorphism. 

\begin{lem}[Going down]\label{mintomin} 
Let $\pi:M\to N$ be an open, closed and surjective semialgebraic map. Then 
\begin{itemize}
\item[(i)] The induced homomorphism $\varphi^\diam_{\pi}:{\mathcal S}^\diam(N)\to{\mathcal S}^\diam(M)$ satisfies the going-down property.
\item[(ii)] The spectral map $\Specd(\pi):\Specd(M)\to\Specd(N)$ maps minimal prime ideals of ${\mathcal S}^\diam(M)$ to minimal prime ideals of ${\mathcal S}^\diam(N)$.
\end{itemize}
\end{lem}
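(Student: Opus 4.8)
The plan is to deduce the going-down property for $\varphi^\diam_\pi$ from flatness, and then deduce the statement about minimal primes from the going-down property. For part (i), recall Schwartz's theorem \cite[Thm.13]{s3} quoted in the introduction: a semialgebraic map with finite fibers is open if and only if $\varphi_\pi$ is flat; but here $\pi$ need not have finite fibers, so a direct appeal is not available and one must argue differently. Instead I would work spectrally: an open, closed, surjective semialgebraic map induces, by Lemma \ref{propern}(i), an open, closed, surjective spectral map $\Specd(\pi)$, and openness of $\Specd(\pi)$ already forces a form of going-down. More precisely, the key fact is that in $\Specd(M)$ the set of primes contained in a given prime forms a chain (specializations in a real spectrum form a chain, \cite[Prop.7.1.23]{bcr}), so the generic points under a given prime $\gtp$ are totally ordered and there is a unique minimal prime $\gtq_0\subset\gtp$. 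Then going-down for $\varphi^\diam_\pi$ reduces to: given $\gtp\in\Specd(M)$ with image $\Specd(\pi)(\gtp)=\gtP$ and a prime $\gtQ\subset\gtP$ of ${\mathcal S}^\diam(N)$, find a prime $\gtq\subset\gtp$ of ${\mathcal S}^\diam(M)$ over $\gtQ$.

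The cleanest route is via openness of $\Specd(\pi)$: for a continuous spectral map between spectral spaces, openness is equivalent to the going-down property for the ring homomorphism (this is the spectral translation of the classical "flat $\Rightarrow$ going-down", but it holds under openness alone for spectral maps of spectral spaces, since the image of a basic open $\Ddd(f)$ being open means exactly that every generization of a point in the image is hit). Concretely: let $\gtQ\subsetneq\gtP=(\varphi^\diam_\pi)^{-1}(\gtp)$. The closure $\cl(\{\gtP\})$ in $\Specd(N)$ consists of the primes containing $\gtP$; generizations of $\gtP$ are the primes contained in $\gtP$, and $\gtQ$ is one of them. Since $\Specd(\pi)$ is open, its image is stable under generization: if $\gtP$ lies in the image (it does, being $\Specd(\pi)(\gtp)$) then so does $\gtQ$, and moreover one can choose a preimage $\gtq$ of $\gtQ$ that is a generization of $\gtp$ — this last refinement is exactly what openness buys, by considering the restriction of $\Specd(\pi)$ to the chain of generizations of $\gtp$ and using that the image of that chain is open in the chain of generizations of $\gtP$. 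Combined with the chain property this yields $\gtq\subset\gtp$, proving going-down. (Alternatively, one invokes \cite[Ch.6]{s2} or the flatness statement \cite[Thm.13]{s3} applied after reducing to fibers, but the spectral argument is self-contained given the excerpt.)

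For part (ii), let $\gtq$ be a minimal prime of ${\mathcal S}^\diam(M)$ and set $\gtQ:=\Specd(\pi)(\gtq)=(\varphi^\diam_\pi)^{-1}(\gtq)$. Suppose $\gtQ$ is not minimal; then there is a prime $\gtQ_0\subsetneq\gtQ$ in ${\mathcal S}^\diam(N)$. Apply going-down from part (i) to the inclusion $\gtQ_0\subset\gtQ$ and the prime $\gtq$ lying over $\gtQ$: we obtain a prime $\gtq_0$ of ${\mathcal S}^\diam(M)$ with $\gtq_0\subset\gtq$ and $(\varphi^\diam_\pi)^{-1}(\gtq_0)=\gtQ_0$. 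Since $\gtQ_0\subsetneq\gtQ$, the primes $\gtq_0$ and $\gtq$ have distinct contractions, hence $\gtq_0\neq\gtq$, so $\gtq_0\subsetneq\gtq$, contradicting minimality of $\gtq$. Therefore $\gtQ$ is minimal, as required.

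The main obstacle I anticipate is making the equivalence "openness of $\Specd(\pi)$ $\iff$ going-down for $\varphi^\diam_\pi$" fully rigorous in the spectral setting without circularity — one must be careful that "stable under generization" is upgraded to "a preimage can be chosen among generizations," which is precisely the content of openness and uses the spectral-space structure (quasi-compactness of $\Ddd(f)$ and the fact that $\Specd(\pi)$ carries basic opens to opens). If one prefers to avoid this, the safe fallback is to cite \cite[Ch.6]{s2} for (i) directly, since Schwartz establishes flatness/going-down for the spectral maps of open closed surjective semialgebraic maps there, and then (ii) follows formally as above.
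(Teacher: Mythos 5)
Your core idea --- that openness of $\Specd(\pi)$ already forces going-down --- is correct, but the justification you give for the crucial lifting step does not hold up. You propose to restrict $\Specd(\pi)$ to the chain of generizations of $\gtp$ and claim that its image is open in the chain of generizations of $\gtP$. Openness of a map, however, does not pass to such restrictions: the chain $\{\gtq:\gtq\subset\gtp\}$ is not the full preimage of $\{\gtQ':\gtQ'\subset\gtP\}$ (there are typically other primes over generizations of $\gtP$ not contained in $\gtp$), so Lemma~\ref{trivial}(i) is inapplicable and the restricted map need not be open. The chain property from \cite[Prop.7.1.23]{bcr} that you invoke is also not actually used anywhere in the argument. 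The correct way to finish the spectral-space route is: for every $f\in{\mathcal S}^\diam(M)\setminus\gtp$ the set $\Specd(\pi)(\Ddd(f))$ is open and contains $\gtP$, hence contains every generization $\gtQ\subset\gtP$; therefore $\Ddd(f)\cap\Specd(\pi)^{-1}(\gtQ)\neq\varnothing$ for every such $f$, i.e.\ $\gtp\in\cl_{\Specd(M)}(\Specd(\pi)^{-1}(\gtQ))$. Since fibers of spectral maps are pro-constructible, and in a spectral space the closure of a pro-constructible set equals the set of specializations of its points (a compactness argument in the constructible topology), some $\gtq\in\Specd(\pi)^{-1}(\gtQ)$ satisfies $\gtq\subset\gtp$, which is going-down. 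Your deduction of~(ii) from~(i) is then correct.

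Two further remarks. Your fallback citation misses the mark: \cite[Ch.6]{s2} is used in this paper for openness, closedness and surjectivity of $\Specd(\pi)$ (Lemma~\ref{propern}(i)), not for flatness or going-down. The paper's own proof is a citation along a different route: the ${\mathcal S}^*$-case is taken from \cite[3.6]{fg3} and the ${\mathcal S}$-case reduced to it via ${\mathcal S}(M)={\mathcal W}_M^{-1}{\mathcal S}^*(M)$, with the stated alternative being that \cite[Prop.1]{s3} (not Theorem~13, which needs finite fibers) gives flatness of $\varphi^\diam_\pi$ when $\pi$ is open, and then \cite[Ch.10]{am} yields going-down. If you supply the pro-constructibility step, your argument becomes a genuinely self-contained alternative that uses only openness of $\Specd(\pi)$ and no external flatness theorem.
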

\begin{proof}
The $\mathcal{S}^*$-case was proved in \cite[3.6]{fg3}. The $\mathcal{S}$-case is proved from the $\mathcal{S}^*$-case using that ${\mathcal S}(N)={\mathcal W}_N^{-1}{\mathcal S}^*(N)$ and ${\mathcal S}(M)={\mathcal W}_M^{-1}{\mathcal S}^*(M)$. Alternatively, it follows from \cite[Prop.1]{s3} and \cite[Ch.10]{am} using only that $\pi$ is open.
\end{proof}

\subsection{Addition of radical and prime ideals}\label{subsecAddition}
We need some results concerning the addition of radical and prime ideals of rings of semialgebraic functions.

\begin{lem}\label{suma} 
Let $\gta$ be a radical ideal of ${\mathcal S}^\diam(M)$ and let $\gtp$ be a prime ideal of ${\mathcal S}^\diam(M)$. Let $f,g\in{\mathcal S}^\diam(M)$. It holds:
\begin{itemize} 
\item[(i)] If $|f|\leq |g|$ and $g\in\gta$, then $f\in\gta$.
\item[(ii)] The sum of two radical ideals of ${\mathcal S}^\diam(M)$ is a radical ideal.
\item[(iii)] The sum $\gta+\gtp$ is either ${\mathcal S}^\diam(M)$ or a prime ideal of ${\mathcal S}^\diam(M)$. 
\end{itemize}
\end{lem}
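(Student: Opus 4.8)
The plan is to prove the three items in order, using the basic order-theoretic structure of $\mathcal{S}^\diam(M)$ recorded in the preliminaries (namely that $\mathcal{S}^\diam(M)$ is a real closed ring, hence its prime ideals over a given prime form a chain, and that $|f|\le|g|$ makes sense because these rings are $f$-rings / lattice-ordered).

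\emph{Step (i).} I would start from $g\in\gta$ and $|f|\le|g|$. The key trick is the identity $f^2 = h\cdot g$ for a suitable $h\in\mathcal{S}^\diam(M)$: set $h:= f^2/g$ on $D(g)$ and $h:=0$ on $Z(g)$; since $|f|\le |g|$ we have $|h|=|f^2/g|=|f|\cdot(|f|/|g|)\le|f|$ on $D(g)$, so $h$ extends continuously (and boundedly, in the $\mathcal{S}^*$ case) by $0$ across $Z(g)\subset Z(f)$, giving $h\in\mathcal{S}^\diam(M)$ with $f^2=hg$. Then $f^2=hg\in\gta$, and since $\gta$ is radical, $f\in\gta$. (Alternatively one may write $f^2\le g^2$ directly and use $f^2 = g^2\cdot(f^2/g^2)$ with the same extension-by-zero argument; either works.)

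\emph{Step (ii).} Let $\gta,\gtb$ be radical ideals and let $h\in\mathcal{S}^\diam(M)$ with $h^2\in\gta+\gtb$, say $h^2=a+b$ with $a\in\gta$, $b\in\gtb$. The goal is $h\in\gta+\gtb$. Using that $\mathcal{S}^\diam(M)$ is an $f$-ring, I would split $h^2=a+b$ on the two semialgebraic pieces $\{|a|\le|b|\}$ and $\{|b|\le|a|\}$, or more cleanly: note $|a|\le|a|+|b|$ and $h^2\le|a|+|b|=:c$, where $c$ is a sum of an element with the same zero set as $a$-part-controlled data; the standard approach is to write $h^2 = a+b$ and observe $|h^2|\le 2\max(|a|,|b|)$, so $h^2 = h^2\cdot\frac{2\max(|a|,|b|)}{2\max(|a|,|b|)}$-type manipulation. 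Concretely I would use (i): on the closed semialgebraic set $A:=\{|a|\ge|b|\}$ we have $h^2\le 2|a|$, and on $B:=\{|b|\ge|a|\}$ we have $h^2\le 2|b|$; choosing semialgebraic bump functions $\lambda_A,\lambda_B$ with $\lambda_A+\lambda_B=1$ supported near $A$, $B$ respectively, write $h=\lambda_A h+\lambda_B h$ and show $(\lambda_A h)^2\le$ (const)$\cdot|a|$ so $\lambda_A h\in\sqrt{\gta\mathcal{S}^\diam(M)}=\gta$ by (i), and symmetrically $\lambda_B h\in\gtb$. Hence $h\in\gta+\gtb$, so $\gta+\gtb$ is radical.

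\emph{Step (iii).} Assume $\gta+\gtp\ne\mathcal{S}^\diam(M)$; by (ii) it is a radical ideal, so it suffices to show it is prime, i.e. that its zero set in the real spectrum / the set of primes containing it is irreducible, or directly: if $fg\in\gta+\gtp$ then $f\in\gta+\gtp$ or $g\in\gta+\gtp$. Here I would invoke the real closed ring structure: $\gta$ is an intersection of the minimal primes below... no — better, use that $\mathcal{S}^\diam(M)$ is Gelfand and the specializations of each point form a chain. The cleanest route: $\gta+\gtp$ is radical and contains $\gtp$; the primes of $\mathcal{S}^\diam(M)$ containing $\gtp$ form a \emph{chain} (totally ordered by inclusion), and $\gta+\gtp=\bigcap\{\gtq\text{ prime}:\gtq\supseteq\gta+\gtp\}$; an intersection of a \emph{chain} of prime ideals is prime. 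Since every prime containing $\gta+\gtp$ contains $\gtp$, they all lie in this chain, so the intersection $\gta+\gtp$ is prime. This is the step I expect to carry the real content — everything hinges on the chain property of $\Specd(M)$ over a fixed prime, which is exactly what the preliminaries established via the real spectrum. I would make sure the "intersection of a chain of primes is prime" lemma is applied correctly: if $fg\in\bigcap\gtq$ but $f\notin\gtq_1$ and $g\notin\gtq_2$, then WLOG $\gtq_1\subseteq\gtq_2$, so $f\notin\gtq_2$ and $g\notin\gtq_2$ forces $fg\notin\gtq_2$, contradiction.

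\emph{Main obstacle.} The routine-looking but delicate points are the continuous (and bounded) extension-by-zero arguments in (i) and (ii): one must verify carefully that the quotient functions extend across the relevant zero sets as genuine elements of $\mathcal{S}^\diam(M)$, and in the $\mathcal{S}^*$ case that boundedness is preserved. The genuinely structural point — and the one I'd flag as the heart of the matter — is (iii), where primeness of $\gta+\gtp$ is \emph{not} a formal consequence of primeness of $\gtp$ in a general ring; it relies essentially on $\mathcal{S}^\diam(M)$ being (the underlying ring of) a real closed ring, so that the primes over $\gtp$ are totally ordered.
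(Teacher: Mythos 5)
Your structure mirrors the paper's, but the weight is distributed differently: on (i) and (ii) you give elementary hands-on arguments where the paper simply cites Schwartz's real-closed-ring machinery (\cite[Prop.3.8]{s5}, or the axiomatization in \cite{ps}, behind (i), and \cite[Cor.15]{s1} for (ii)), whereas your (iii) follows the paper's own proof. For (i), the extension-by-zero producing $h\in{\mathcal S}^\diam(M)$ with $f^2=hg\in\gta$ is correct and self-contained; the paper obtains $|f|^2\in|g|{\mathcal S}^\diam(M)$ by citation and squares once more to land $f^4\in g^2{\mathcal S}^\diam(M)\subset\gta$. You are essentially proving the cited divisibility fact rather than quoting it. For (ii), splitting $h=\lambda_A h+\lambda_B h$ and pushing each piece into $\gta$, resp.\ $\gtb$, via (i) is the right idea, but ``bump functions supported near $A$, $B$ with $\lambda_A+\lambda_B=1$'' glosses over the construction: such a pair with those support constraints may fail to exist continuously, since the overlap $A\cap B=\{|a|=|b|\}$ can be thin. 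A clean way to realize your plan is the weighted split $\lambda_A:=|a|/(|a|+|b|)$ and $\lambda_B:=|b|/(|a|+|b|)$ on $D(|a|+|b|)$, with $\lambda_A h$, $\lambda_B h$ extended by $0$ across $Z(|a|+|b|)\subset Z(h)$; then $(\lambda_A h)^2=\lambda_A^2(a+b)\le |a|^2/(|a|+|b|)\le |a|$, so (i) applied to $(\lambda_A h)^2$ against $|a|\in\gta$, plus radicality, yields $\lambda_A h\in\gta$, and symmetrically for $\gtb$. This is a genuinely more elementary route than the paper's citation; it buys self-containedness at the cost of delicate extension arguments the paper sidesteps entirely.

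There is a real slip in (iii): from $f\notin\gtq_1$ and $\gtq_1\subseteq\gtq_2$ you infer $f\notin\gtq_2$, which is false --- a larger prime may well absorb $f$. The ``intersection of a chain of primes is prime'' argument must descend to the smaller prime: from $g\notin\gtq_2\supseteq\gtq_1$ deduce $g\notin\gtq_1$; with $f\notin\gtq_1$ and $\gtq_1$ prime this gives $fg\notin\gtq_1$, contradicting $fg\in\bigcap\gtq\subset\gtq_1$. Once corrected, your (iii) coincides with the paper's: radicality from (ii), write $\gta+\gtp$ as the intersection of the primes above it, and observe they all contain $\gtp$ and hence form a chain. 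One calibration remark: you flag (iii) as the heart of the matter, but once (ii) is available (iii) is purely formal; in the paper's economy (ii) is the nontrivial input, which it outsources to Schwartz.
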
 
\begin{proof} 
(i) By \cite[Prop.3.8]{s5} or by the axiomatization of real closed rings used in \cite[Introduction]{ps} we have $|f|^2\in|g|{\mathcal S}^\diam(M)$. Thus, $f^4\in g^2{\mathcal S}^\diam(M)\subset\gta$, so $f\in\gta$.

(ii) This follows from \cite[Cor.15]{s1} because ${\mathcal S}^\diam(M)$ is a real closed ring. 

(iii) By (ii) $\gtb:=\gta+\gtp$ is a radical ideal. Let $\Pp:=\{\gtq\in\Specd(M):\ \gtb\subset\gtq\}$. Then $\gtb=\bigcap_{\gtq\in\Pp}\gtq$. As $\Pp$ is a chain with respect to the inclusion, $\gtb={\mathcal S}^\diam(M)$ or $\gtb$ is a prime ideal, as required.
\end{proof}
 
Recall that an ideal $\gta$ of the ring ${\mathcal S}(M)$ is a \em $z$-ideal \em if for each pair of functions $f,g\in{\mathcal S}(M)$ such that $f\in\gta$ and $Z(f)\subset Z(g)$, it holds $g\in\gta$. In particular, $z$-ideals are radical ideals.

\begin{lem}\label{min} 
Let $M\subset\R^n$ be a semialgebraic set.
\begin{itemize} 
\item[(i)] Let $\gta_1,\gta_2$ be two $z$-ideals of ${\mathcal S}(M)$. The sum $\gta_1+\gta_2$ is either ${\mathcal S}(M)$ or a $z$-ideal of ${\mathcal S}(M)$. 
\item[(ii)] Let $\gtp_1,\dots,\gtp_k$ be minimal prime ideals of ${\mathcal S}(M)$. The sum $\gtp:=\gtp_1+\cdots+\gtp_k$ is ${\mathcal S}(M)$ or a prime $z$-ideal of ${\mathcal S}(M)$.
\end{itemize} 
\end{lem}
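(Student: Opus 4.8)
The statement to prove is Lemma \ref{min}, which treats $z$-ideals of $\mathcal{S}(M)$. I would begin with part (i), which is the core of the matter, and then deduce (ii) from (i) together with Lemma \ref{suma}. For (i), suppose $\gta_1,\gta_2$ are $z$-ideals of $\mathcal{S}(M)$ and $\gta_1+\gta_2\neq\mathcal{S}(M)$. By Lemma \ref{suma}(ii) the sum $\gta_1+\gta_2$ is already a radical ideal, so it only remains to verify the $z$-ideal property: if $f\in\gta_1+\gta_2$ and $Z(f)\subset Z(g)$, then $g\in\gta_1+\gta_2$. Write $f=f_1+f_2$ with $f_i\in\gta_i$. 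The natural move is to use the standard trick for rings of continuous functions: replace $f$ by a single generator with the same zero set. Indeed $f_1^2+f_2^2$ lies in $\gta_1+\gta_2$ (as $f_1^2\in\gta_1$, $f_2^2\in\gta_2$) and satisfies $Z(f_1^2+f_2^2)=Z(f_1)\cap Z(f_2)$. The key point is then to relate $Z(f)=Z(f_1+f_2)$ to $Z(f_1)\cap Z(f_2)$; in general these differ, so I would instead argue directly with the two components.

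\textbf{Key steps for (i).} Given $f=f_1+f_2\in\gta_1+\gta_2$ with $Z(f)\subset Z(g)$, I would consider the functions $g_i:=g\cdot\frac{f_i^2}{f_1^2+f_2^2}$ (extended by $0$ on $Z(f_1^2+f_2^2)$), noting that each $g_i$ is a well-defined bounded-denominator quotient, hence semialgebraic and continuous — continuity at points of $Z(f_1^2+f_2^2)$ follows because $|g_i|\le|g|$ and $g$ vanishes there (as $Z(f_1^2+f_2^2)=Z(f_1)\cap Z(f_2)\subset Z(f_1+f_2)=Z(f)\subset Z(g)$). Then $g=g_1+g_2$ wherever $f_1^2+f_2^2\neq 0$, and both sides vanish on $Z(f_1^2+f_2^2)$, so $g=g_1+g_2$ on all of $M$. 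Moreover $Z(f_i)\subset Z(g_i)$ by construction, so the $z$-ideal property of $\gta_i$ gives $g_i\in\gta_i$, whence $g=g_1+g_2\in\gta_1+\gta_2$. This shows $\gta_1+\gta_2$ is a $z$-ideal.

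\textbf{Part (ii) and the main obstacle.} For (ii), each minimal prime $\gtp_i$ of $\mathcal{S}(M)$ is a $z$-ideal: minimal primes of $\mathcal{S}(M)$ consist of zerodivisors, and in a reduced ring with the interpolation property available here one checks $f\in\gtp_i$, $Z(f)\subset Z(g)$ forces $g\in\gtp_i$ — this is where the $z$-ideal machinery referenced in the paper (Subsection \ref{subsecAddition}, Corollary \ref{minz}) is invoked, so I would cite it rather than reprove it. Then iterating (i), the sum $\gtp_1+\cdots+\gtp_k$ is either $\mathcal{S}(M)$ or a $z$-ideal; and by Lemma \ref{suma}(iii), adding a prime ideal to a radical ideal yields $\mathcal{S}(M)$ or a prime, so inductively $\gtp_1+\cdots+\gtp_k$ is $\mathcal{S}(M)$ or a prime $z$-ideal. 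I expect the main obstacle to be the continuity/semialgebraicity bookkeeping for the auxiliary functions $g_i$ at the common zero set $Z(f_1)\cap Z(f_2)$: one must be careful that $\frac{f_i^2}{f_1^2+f_2^2}$ is bounded (it is, lying in $[0,1]$) and that multiplying by $g$ — which vanishes on that set — restores continuity there. Once this is set up cleanly, the rest is a formal combination of Lemma \ref{suma} with induction.
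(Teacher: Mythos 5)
Your proof of part (i) is correct but constructs the decomposition $g=g_1+g_2$ by a different route than the paper. You form $g_i:=g\cdot\frac{f_i^2}{f_1^2+f_2^2}$, extended by $0$ on $Z(f_1)\cap Z(f_2)$, and verify directly that each $g_i$ is a continuous semialgebraic function with $Z(f_i)\subset Z(g_i)$ (continuity at $Z(f_1)\cap Z(f_2)$ because $\bigl|\frac{f_i^2}{f_1^2+f_2^2}\bigr|\le 1$ and $g$ vanishes there, as $Z(f_1)\cap Z(f_2)\subset Z(f)\subset Z(g)$). The paper instead defines a piecewise semialgebraic function $h$ on the closed set $N:=Z(f_1)\cup Z(f_2)$ (taking $h=0$ on $Z(f_1)$ and $h=g$ on $Z(f_2)$, consistent on the overlap because it lies in $Z(g)$), extends it to $H\in\mathcal{S}(M)$ via the Tietze-type extension theorem of Delfs--Knebusch, and then sets $g_1:=H$, $g_2:=g-H$. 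Both arguments hinge on the same inclusion $Z(f_1)\cap Z(f_2)\subset Z(g)$; yours produces the summands explicitly with an elementary boundedness estimate, while the paper's offloads the construction onto the extension theorem. Either is a legitimate and standard maneuver.

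For part (ii) your strategy (induction on $k$, applying (i) and then Lemma \ref{suma}(iii) to pass from ``$z$-ideal'' to ``prime $z$-ideal'') matches the paper exactly. One small slip: you cite Subsection \ref{subsecAddition} and Corollary \ref{minz} for the fact that minimal primes of $\mathcal{S}(M)$ are $z$-ideals, but Corollary \ref{minz} is a \emph{downstream} result that itself uses the present lemma; the paper's actual source is an external reference ([Cor.~4.7] of \cite{fe1}). The sketchy justification you offer (``minimal primes consist of zerodivisors, and in a reduced ring with the interpolation property \dots'') is not a proof of that fact and should simply be replaced by the correct citation.
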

\begin{proof} 
(i) Suppose that $\gta_1+\gta_2\neq{\mathcal S}(M)$ and let $f,g\in{\mathcal S}(M)$ be such that $f\in\gta_1+\gta_2$ and $Z(f)\subset Z(g)$. Then there exist functions $f_i\in\gta_i$ such that $f=f_1+f_2$. As $Z(f_1)\cap Z(f_2)\subset Z(f)\subset Z(g)$, the function
$$
h:N:=Z(f_1)\cup Z(f_2)\to\R,\ x\mapsto
\begin{cases}
0&\text{if $x\in Z(f_1)$},\\
g(x)&\text{if $x\in Z(f_2)$}\\
\end{cases}
$$
is a well-defined semialgebraic function. By \cite{dk1} there exists $H\in{\mathcal S}(M)$ such that $H|_N=h$. We have $Z(f_1)\subset Z(H)$ and $Z(f_2)\subset Z(g-H)$. As each $\gta_i$ is a $z$-ideal, $H\in\gta_1$ and $g-H\in\gta_2$. Thus, $g=H+(g-H)\in\gta_1+\gta_2$.

(ii) We prove the statement by induction on $k$. All minimal prime ideals of ${\mathcal S}(M)$ are $z$-ideals \cite[Cor.4.7]{fe1}. Suppose $k\geq2$ and let $\gtq:=\gtp_1+\cdots+\gtp_{k-1}$. By induction hypothesis either $\gtq={\mathcal S}(M)$ or $\gtq$ is a prime $z$-ideal. By part (i) the sum $\gtp=\gtq+\gtp_k$ is either ${\mathcal S}(M)$ or a $z$-ideal. In the last case $\gtp$ is by Lemma \ref{suma}(iii) a prime ideal, as required.
\end{proof}

\subsection{Symmetric polynomials and semialgebraic $d$-branched coverings}\label{sympol}
Let us analyze the effect over ${\mathcal S}^\diam(M)$ of symmetric polynomials via a semialgebraic $d$-branched covering $\pi:M\to N$. The approach follows the ideas of the reference \cite[Thm. 12, Ch. III]{gr} concerning complex analytic coverings.

\begin{lem}\label{sym}
Let $\pi:M\to N$ be a semialgebraic $d$-branched covering and let $\sigma\in\R[\x_1,\ldots,\x_d]$ be a symmetric polynomial. Let $f\in{\mathcal S}^\diam(M)$ and define
$$
\sigma(f):N\to\R,\ y\mapsto\sigma(f(x_1),\overset{b_\pi(x_1)}{\ldots},f(x_1),\ldots,f(x_r),\overset{b_\pi(x_r)}{\ldots},f(x_r))
$$
if $\pi^{-1}(y)=\{x_1,\ldots,x_r\}$ (recall that $b_\pi(x_1)+\cdots+b_\pi(x_r)=d$). Then $\sigma(f)\in{\mathcal S}^\diam(N)$.
\end{lem}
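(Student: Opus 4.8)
The plan is to check directly that $\sigma(f)$ defines an element of $\mathcal{S}^\diam(N)$, i.e.\ a well-defined continuous function $N\to\R$ with semialgebraic graph which is bounded whenever $f$ is. Well-definedness is immediate: $\pi$ is surjective, so each fiber $\pi^{-1}(y)$ is a non-empty finite set, and since $\sigma$ is symmetric the number $\sigma(f)(y)$ depends only on the multiset of values of $f$ on $\pi^{-1}(y)$ in which $f(x)$ is repeated $b_\pi(x)$ times; this multiset has total cardinality $d=\sum_{x\in\pi^{-1}(y)}b_\pi(x)$, which is independent of $y$ by Remark \ref{cuenta}(i).

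\emph{Semialgebraic graph.} The key observation is that, for $\pi^{-1}(y)=\{x_1,\dots,x_r\}$ (so $b_\pi(x_i)\geq 1$ and $\sum_i b_\pi(x_i)=d$), a tuple $(u_1,\dots,u_d)\in M^d$ with $\pi(u_1)=\cdots=\pi(u_d)=y$ lists $x_1$ repeated $b_\pi(x_1)$ times, \dots, $x_r$ repeated $b_\pi(x_r)$ times, in some order, \emph{if and only if} $\#\{j:u_j=u_i\}=b_\pi(u_i)$ for every $i$. Hence the graph of $\sigma(f)$ equals
\begin{multline*}
\big\{(y,t)\in N\times\R:\ \exists\, u_1,\dots,u_d\in M\ \text{with}\ \pi(u_1)=\cdots=\pi(u_d)=y,\\
\#\{j:u_j=u_i\}=b_\pi(u_i)\ \text{for all}\ i,\ \text{and}\ t=\sigma(f(u_1),\dots,f(u_d))\big\}.
\end{multline*}
Since the graphs of $\pi$, $f$ and $b_\pi$ are semialgebraic (the last by the proposition on the semialgebraic ramification index) and cardinalities $\leq d$ are first order definable, this set is defined by a first order formula in $(y,t)$, hence it is semialgebraic by Tarski--Seidenberg.

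\emph{Continuity.} Fix $y_0\in N$ with $\pi^{-1}(y_0)=\{x_1,\dots,x_r\}$ and let $\veps>0$. By continuity of $f$, choose open neighborhoods $W^{x_j}$ of $x_j$ on which $f$ oscillates by less than $\eta$, with $\eta>0$ to be fixed. By Remark \ref{cuenta}(iii) there is a special neighborhood $V$ of $y_0$ together with a family of exceptional neighborhoods $U^{x_j}\subset W^{x_j}$ of $x_j$ with respect to $V$. For every $y\in V$ one has $\pi^{-1}(y)=\bigsqcup_{j=1}^r\big(\pi^{-1}(y)\cap U^{x_j}\big)$, and the total ramification weight of the $j$-th piece is stable: $\sum_{x\in\pi^{-1}(y)\cap U^{x_j}}b_\pi(x)=b_\pi(x_j)$ for all $y\in V$. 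Indeed $\pi|_{U^{x_j}}\colon U^{x_j}\to V$ is itself a branched covering (the restriction of $\pi$ to a subset that is open and closed in $\pi^{-1}(V)$), its fiber over $y_0$ is $\{x_j\}$, its ramification index at $x_j$ equals $b_\pi(x_j)$ because $U^{x_j}$ is exceptional for $\pi$, and $b_\pi$ restricted to $U^{x_j}$ is the ramification index of $\pi|_{U^{x_j}}$ by Remark \ref{cuenta}(iv) (the ramification index is a local invariant); now apply Remark \ref{cuenta}(i) to $\pi|_{U^{x_j}}$. It follows that the $d$-tuple encoding the multiset attached to $y$ can be matched, coordinate by coordinate, to within $\eta$ with the one attached to $y_0$. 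Since $\sigma$ is uniformly continuous on bounded subsets of $\R^d$ and invariant under permutations of its variables, taking $\eta$ small enough gives $|\sigma(f)(y)-\sigma(f)(y_0)|<\veps$, so $\sigma(f)$ is continuous at $y_0$.

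\emph{Boundedness, and the main difficulty.} If $f\in\mathcal{S}^*(M)$ with $|f|\leq C$ on $M$, then for each $y$ all arguments of $\sigma$ in the definition of $\sigma(f)(y)$ lie in $[-C,C]^d$, whence $|\sigma(f)(y)|\leq\sup_{[-C,C]^d}|\sigma|<+\infty$; in the $\mathcal{S}$-case there is nothing to add. Thus $\sigma(f)\in\mathcal{S}^\diam(N)$. The one genuinely delicate point is the continuity step, and within it the bookkeeping of ramification multiplicities: one must be sure that, as the base point varies inside a special neighborhood, the part of the fiber lying in a given exceptional neighborhood $U^{x_j}$ carries total ramification weight exactly $b_\pi(x_j)$. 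This is precisely where the local structure of branched coverings (arbitrarily small exceptional neighborhoods, Remark \ref{cuenta}(iii), and local constancy of the fiber degree, Remark \ref{cuenta}(i)) does the work; granting that invariance, the rest is an elementary application of the uniform continuity of $\sigma$.
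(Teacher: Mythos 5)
Your proof is correct, and the continuity argument is essentially the paper's (both fix a point $y_0$, use continuity of $f$ and of the symmetric polynomial, take exceptional neighborhoods $U^{x_j}\subset W^{x_j}$ via Remark~\ref{cuenta}(iii), and use the stability $\sum_{x\in\pi^{-1}(y)\cap U^{x_j}}b_\pi(x)=b_\pi(x_j)$ to match the two $d$-tuples). Where you genuinely diverge is the semialgebraic-graph step. The paper proves continuity first and then exploits it: on the dense set $N\setminus\mathcal{R}_\pi$ the fiber has exactly $d$ distinct points with $b_\pi\equiv 1$, so $\sigma(f)|_{N\setminus\mathcal{R}_\pi}$ is obviously definable, and the full graph is recovered as $\cl_{N\times\R}\big(\Gamma(\sigma(f)|_{N\setminus\mathcal{R}_\pi})\big)$ by continuity plus density. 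You instead describe the graph directly over all of $N$, using the observation that a tuple $(u_1,\dots,u_d)$ in the fiber over $y$ realizes the correct multiset precisely when $\#\{j:u_j=u_i\}=b_\pi(u_i)$ for every $i$; this works because each multiplicity is forced to be $0$ or $b_\pi(u_i)$, and the total $d=\sum_\alpha b_\pi(x_\alpha)$ rules out the $0$ option. Your formula is first order in the graphs of $\pi$, $f$ and $b_\pi$, and then Tarski--Seidenberg finishes. This is slightly more direct (it does not route through continuity), but it makes essential use of the earlier Proposition on the semialgebraic ramification index (semialgebraicity of $b_\pi$), which the paper's argument avoids. Both routes are sound; the paper's keeps the lemma independent of the $b_\pi$-graph proposition, while yours decouples the definability step from the continuity step.
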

\begin{proof}
As $\sigma$ is a symmetric polynomial, $\sigma$ is a well-defined function. We prove first: \em $\sigma(f)$ is continuous on $N$\em. 

Pick $y\in Y$ and write $\pi^{-1}(y):=\{x_1,\ldots,x_r\}$. Fix $\veps>0$. As $\sigma:\R^d\to\R$ is continuous at
$$
p:=(p_1,\ldots,p_d):=(f(x_1),\overset{b_\pi(x_1)}{\ldots},f(x_1),\ldots,f(x_r),\overset{b_\pi(x_r)}{\ldots},f(x_r)),
$$
there exists $\delta>0$ such that if $q:=(q_1,\ldots,q_d)\in\R^d$ and $|p_i-q_i|<\delta$ for $i=1,\ldots,d$, then $|\sigma(p)-\sigma(q)|<\veps$. As $f$ is continuous at $x_1,\ldots,x_r$, there exist open neighborhoods $A^{x_i}$ of $x_i$ such that $|f(z_i)-f(x_i)|<\delta$ for each $z_i\in A^{x_i}$ for $i=1,\ldots,r$. Let $V\subset N$ be a special neighborhood of $y$ and let $U^{x_1},\ldots,U^{x_r}$ be a family of exceptional neighborhoods with $U^{x_i}\subset A^{x_i}$ for $i=1,\ldots,r$ (use Remark \ref{cuenta}(iii)). Pick a point $y'\in V$ and write $\pi^{-1}(y'):=\{z_{11},\ldots,z_{1s_1},\ldots,z_{r1},\ldots,z_{rs_r}\}$ where $z_{ij}\in U^{x_i}$ for $j=1,\ldots,s_i$ and $i=1,\ldots,r$. Using that $\pi|_{M_{\reg}}:M_{\reg}\to N\setminus{\mathcal R}_\pi$ is an unbranched covering, the reader can check that $\sum_{j=1}^{r_i}b_\pi(z_{ij})=b_\pi(x_i)$ for $i=1,\ldots,r$ (Remark \ref{cuenta}(iv) can be useful). As $x_i,z_{ij}\in U^{x_i}\subset A^{x_i}$, we have $|f(x_i)-f(z_{ij})|<\delta$ for $j=1,\ldots,s_i$ and $i=1,\ldots,r$. As $\sum_{j=1}^{r_i}b_\pi(z_{ij})=b_\pi(x_i)$ for $i=1,\ldots,r$, we conclude
\begin{equation*}
\begin{split}
|\sigma(f)(y)-\sigma(f)(y')|&=|\sigma(f(x_1),\overset{b_\pi(x_1)}{\ldots},f(x_1),\ldots,f(x_r),\overset{b_\pi(x_r)}{\ldots},f(x_r))\\
&-\sigma(f(z_{11}),\overset{b_\pi(z_{11})}{\ldots},f(z_{11}),\ldots,f(z_{1s_1}),\overset{b_\pi(z_{1s_1})}{\ldots},f(z_{1s_1}),\\&\ldots,f(z_{r1}),\overset{b_\pi(z_{r1})}{\ldots},f(z_{r1}),\ldots,f(z_{rs_r}),\overset{b_\pi(z_{rs_r})}{\ldots},f(z_{rs_r}))|<\veps.
\end{split}
\end{equation*}
Thus, $\sigma(f)$ is continuous.

In addition, if $f$ is bounded, $\sigma(f)$ is also bounded. We claim: \em $\sigma(f)$ has semialgebraic graph\em.

The restriction $\pi|_{M_{\reg}}:M_{\reg}\to N\setminus{\mathcal R}_\pi$ is a semialgebraic map. For each $y\in N\setminus{\mathcal R}_\pi$ there exist exactly $d$ different points $x_1,\ldots,x_d\in M$ such that $\pi(x_i)=y$ and
$\sigma(f)(y)=\sigma(f(x_1),\ldots,f(x_d))$ (as the polynomial $\sigma$ is symmetric the ordering of the values $f(x_i)$ is not relevant). Thus, the graph of $\sigma(f)|_{N\setminus{\mathcal R}_\pi}$ is a first order definable set, so $\sigma(f)|_{N\setminus{\mathcal R}_\pi}$ is a semialgebraic map. As $\sigma(f)$ is a continuous map, the set $N\setminus{\mathcal R}_\pi$ is dense in $N$ and $\sigma(f)|_{N\setminus{\mathcal R}_\pi}=\sigma(f|_{M\setminus\pi^{-1}({\mathcal R}_\pi}))$ is a semialgebraic function on $N\setminus{\mathcal R}_\pi$, we conclude that the graph $\Gamma(\sigma(f))$ of $\sigma(f)$ is the semialgebraic set $\cl_{M\times\R}(\Gamma(\sigma(f)|_{N\setminus{\mathcal R}_\pi}))$. Thus, $\sigma(f)$ is a semialgebraic function, as required. 
\end{proof}

\subsection{Separated spectral maps}
We prove next a separation result for certain pair of points in $\Specd(M)$, which will allow us to prove in Proposition \ref{sep2} that the spectral map associated to a semialgebraic branched covering is separated.

\begin{lem}[Separation]\label{sep} 
Let $\gtp_1,\gtp_2\in\Specd(M)$ be such that $\gtp_1\not\subset\gtp_2$ and $\gtp_2\not\subset\gtp_1$. Then there exist $f_1,f_2\in{\mathcal S}^\diam(M)$ such that $\gtp_i\in\Ddd(f_i)$ and $f_1f_2=0$. In particular, $\Ddd(f_1)\cap\Ddd(f_2)=\varnothing$.
\end{lem}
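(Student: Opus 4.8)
The statement to prove is Lemma \ref{sep}: given two prime ideals $\gtp_1,\gtp_2$ of ${\mathcal S}^\diam(M)$, neither contained in the other, we want $f_1,f_2\in{\mathcal S}^\diam(M)$ with $f_i\notin\gtp_i$ and $f_1f_2=0$. Since $\gtp_1\not\subset\gtp_2$, pick $g_1\in\gtp_1\setminus\gtp_2$; since $\gtp_2\not\subset\gtp_1$, pick $g_2\in\gtp_2\setminus\gtp_1$. The naive attempt $f_1=g_2$, $f_2=g_1$ gives $f_i\notin\gtp_i$ but in general $f_1f_2=g_1g_2\neq 0$. The idea is to repair this using the order structure: replace $g_1,g_2$ by functions with \emph{disjoint supports}. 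Concretely, consider $h:=|g_1|-|g_2|\in{\mathcal S}^\diam(M)$ (if working in ${\mathcal S}^*(M)$ one may first normalize $g_i$ to be bounded, e.g. replace $g_i$ by $g_i/(1+g_i^2)$, which changes neither membership in a prime ideal nor the vanishing set), and take its positive and negative parts $h^+:=\max\{h,0\}$ and $h^-:=\max\{-h,0\}$, both of which lie in ${\mathcal S}^\diam(M)$. Then $h^+h^-=0$ identically, $Z(h^+)=\{|g_1|\le|g_2|\}$ and $Z(h^-)=\{|g_1|\ge|g_2|\}$.

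\textbf{Key steps.} First I would set $f_1:=h^-$ and $f_2:=h^+$ and verify $f_1f_2=0$, which is immediate. The substance is showing $f_1\notin\gtp_1$ and $f_2\notin\gtp_2$. Suppose for contradiction $f_2=h^+\in\gtp_2$. On $Z(h^-)=\{|g_1|\ge|g_2|\}$ we have $h=h^+$, hence $|g_2|\le|g_1|$ there and also $|g_2|=|g_2|-|g_1|+|g_1|\le |h^+|+|g_1|$... more cleanly: on all of $M$ one has $|g_2|\le |h^-| + |g_1|$ is false in general, so instead argue with the product. Note $|g_2|^2 = |g_2|\cdot|g_2| \le |g_2|\cdot|g_1|$ on $\{|g_2|\le|g_1|\}=Z(h^-)$, while on $\{|g_2|>|g_1|\}$ we have $h^+ = |g_1|-|g_2|$... let me instead use the cleaner inequality: $|g_2|\,|g_1| \ge \min(|g_1|,|g_2|)^2$, and more to the point $g_2^2 \le |g_1g_2| + |g_1|\,h^+$? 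The robust route is: on $\{h\le 0\}$, i.e. where $|g_1|\le|g_2|$, we have $|g_1|\cdot|g_2|\ge g_1^2$; combining, $g_1^2\le |g_1 g_2| + (\text{something involving } h^+)$. Rather than fiddle, the clean statement is: $g_2^2 \le g_1 g_2\cdot(\text{sign stuff}) $... I will instead use Lemma \ref{suma}(i): it suffices to bound $|g_2|$ by a multiple of something in $\gtp_2$. We have $|g_2|\le |g_1| + h^-$ pointwise? On $\{|g_1|\le|g_2|\}$, $h^-=|g_2|-|g_1|$ so $|g_1|+h^-=|g_2|$; on $\{|g_1|>|g_2|\}$, $h^-=0$ and $|g_1|+h^-=|g_1|>|g_2|$. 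So indeed $|g_2|\le|g_1|+h^-$ on all of $M$. Symmetrically $|g_1|\le|g_2|+h^+$. Now if $h^+\in\gtp_2$, then since $g_2\in\gtp_2$ we get $|g_1|\le|g_2|+h^+$ with the right side in $\gtp_2$ (as $\gtp_2$ is an ideal, and by Lemma \ref{suma}(i) applied to the radical ideal $\gtp_2$), forcing $g_1\in\gtp_2$, contradicting the choice of $g_1$. Symmetrically, $h^-\in\gtp_1$ would force $g_2\in\gtp_1$, a contradiction. This gives $f_1=h^-\notin\gtp_1$ and $f_2=h^+\notin\gtp_2$, i.e. $\gtp_i\in\Ddd(f_i)$, and $f_1f_2=h^-h^+=0$. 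The final sentence, $\Ddd(f_1)\cap\Ddd(f_2)=\varnothing$, follows since $f_1f_2=0\in\gtq$ for every prime $\gtq$, so $\gtq$ cannot avoid both $f_1$ and $f_2$.

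\textbf{Main obstacle.} The only subtlety is the bookkeeping with the order-theoretic identities $|g_2|\le|g_1|+h^-$ and $|g_1|\le|g_2|+h^+$, and invoking the right ``ideal-closure under domination'' fact — namely Lemma \ref{suma}(i) (divisibility of $|f|^2$ by $|g|$ in a real closed ring), which lets us pass from a pointwise inequality $|g_1|\le|g_2|+h^+$ with $g_2,h^+\in\gtp_2$ to the conclusion $g_1\in\gtp_2$, since $\gtp_2$ is a radical (indeed prime) ideal. In the bounded case ${\mathcal S}^*(M)$ one must check that replacing $g_i$ by $g_i/(1+g_i^2)$ preserves both $g_i\in\gtp_i$, $g_i\notin\gtp_{3-i}$ (clear, as $1+g_i^2$ is a unit) and keeps everything bounded, so $h^{\pm}\in{\mathcal S}^*(M)$; then the argument is uniform for $\diam\in\{\ ,*\}$. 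No genuinely hard point arises — the lemma is a soft consequence of the lattice-ordered structure of ${\mathcal S}^\diam(M)$.
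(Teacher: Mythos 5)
Your proof is correct, but it takes a genuinely different route from the paper. The paper's argument is non-constructive: it uses that the prime ideals of $\Specd(M)$ lying over a given prime form a chain (a consequence of the real closed ring structure, via the real spectrum), deduces that no prime can sit inside both $\gtp_1$ and $\gtp_2$, and then invokes Kaplansky's theorem to conclude that $0$ lies in the multiplicative set $({\mathcal S}^\diam(M)\setminus\gtp_1)\cdot({\mathcal S}^\diam(M)\setminus\gtp_2)$, which produces $f_1,f_2$ abstractly. Your argument instead builds $f_1,f_2$ explicitly: starting from witnesses $g_1\in\gtp_1\setminus\gtp_2$, $g_2\in\gtp_2\setminus\gtp_1$, you set $h=|g_1|-|g_2|$ and take $f_1=h^-$, $f_2=h^+$; the pointwise inequalities $|g_1|\le|g_2|+h^+$ and $|g_2|\le|g_1|+h^-$ together with the absolute convexity of radical ideals (Lemma~\ref{suma}(i)) then force $f_i\notin\gtp_i$. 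This is the classical $\ell$-ring / $C(X)$-style argument, and it has the virtue of only needing the lattice-ordered ring structure plus convexity of radical ideals, not the chain property of specializations. One small inaccuracy worth flagging: in the ${\mathcal S}^*$-case you suggest replacing $g_i$ by $g_i/(1+g_i^2)$, justified by $1+g_i^2$ being a unit; but $1+g_i^2$ is generally \emph{not} a unit in ${\mathcal S}^*(M)$ (its inverse need not be bounded away from $0$). Fortunately this normalization is unnecessary: if $\gtp_i\subset{\mathcal S}^*(M)$ then $g_i$ is already chosen in ${\mathcal S}^*(M)$, hence bounded, and $h$, $h^{\pm}$ are automatically bounded. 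Dropping that parenthetical remark makes the proof uniformly valid for $\diam\in\{\ ,*\}$.
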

\begin{proof} 
As the prime ideals of $\Specd(M)$ that contain a given prime ideal constitute a chain, there exist no prime ideals in $\Specd(M)$ contained in both $\gtp_1$ and $\gtp_2$. Consider the multiplicatively closed subset $\mathfrak{T}:=({\mathcal S}^\diam(M)\setminus \gtp_1)\cdot({\mathcal S}^\diam(M)\setminus\gtp_2)$ of ${\mathcal S}^\diam(M)$. Suppose $0\not\in\mathfrak{T}$. By \cite[Ch.1.Thm.1]{k} there exists a prime ideal $\gtp$ of ${\mathcal S}^\diam(M)$ such that $\gtp\cap\mathfrak{T}=\varnothing$, so $\gtp\subset\gtp_1\cap\gtp_2$, which is a contradiction. Consequently, $0\in\mathfrak{T}$ and the statement holds. 
\end{proof}

The previous type of neighborhoods in $\Specd(M)$ are used below to show (in a very elementary way) the local connectedness of the Zariski spectra of rings of semialgebraic functions.

\begin{lem}\label{neigh}
Let $M\subset\R^m$ be a semialgebraic set and let $f\in{\mathcal S}^\diam(M)$. Denote $D:=D(f)\subset M\subset \betad(M)$. Then $\Ddd(f)\subset\cl_{\Specd(M)}(D)$.
\end{lem}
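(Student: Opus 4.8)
The plan is to deduce this from the density of $M$ in $\Specd(M)$ recorded in Subsection \ref{prelim}, together with the elementary topological fact that a dense subset of a space meets every open set in a set that is dense in that open set. First I would note that $\Ddd(f)$ is an open subset of $\Specd(M)$, being one of the principal open sets of the Zariski topology. Since $M$ is dense in $\Specd(M)$, the intersection $M\cap\Ddd(f)$ is dense in $\Ddd(f)$. By the identification $\Dn(h)=M\cap\Ddd(h)$, valid for every $h\in{\mathcal S}^\diam(M)$ (see Subsection \ref{prelim}), this intersection is exactly $\Dn(f)=D$. Hence $D$ is dense in $\Ddd(f)$, which is precisely the assertion $\Ddd(f)\subset\cl_{\Specd(M)}(D)$.

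Alternatively, one can argue straight from the definitions, which makes the role of primality transparent: a prime $\gtp\in\Ddd(f)$ lies in $\cl_{\Specd(M)}(D)$ exactly when every basic open neighborhood $\Ddd(g)$ of $\gtp$ (that is, every $g\in{\mathcal S}^\diam(M)$ with $g\notin\gtp$) meets $D$. If $f,g\notin\gtp$ then $fg\notin\gtp$ because $\gtp$ is prime; in particular $fg$ is not the zero function, so there is $x\in M$ with $f(x)g(x)\neq0$, and then $x\in\Dn(f)\cap\Dn(g)\subset D\cap\Ddd(g)$. Thus $\Ddd(g)\cap D\neq\varnothing$, and $\gtp\in\cl_{\Specd(M)}(D)$.

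I do not expect any real obstacle here; both arguments are only a few lines. The single point to be careful about is invoking the correct bookkeeping between $M$ and its image in $\Specd(M)$ — namely the equalities $\Dn(h)=M\cap\Ddd(h)$ and the density of $M$ in $\Specd(M)$ — both of which are already established in Subsection \ref{prelim}.
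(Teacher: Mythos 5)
Both of your arguments are correct, and both are more self-contained than the paper's own proof. The paper proceeds by setting $C:=\cl_M(D)$, showing that the kernel of the restriction homomorphism ${\mathcal S}^\diam(M)\to{\mathcal S}^\diam(C)$ is contained in $\gtp$ (using that $h|_C=0$ forces $hf=0\in\gtp$, and $f\notin\gtp$ with $\gtp$ prime gives $h\in\gtp$), and then invokes the external result \cite[Lem.4.3]{fg2} to conclude $\gtp\in\cl_{\Specd(M)}(C)=\cl_{\Specd(M)}(D)$. Your first argument replaces all of this with the one-line topological fact that a dense subset of a space traces out a dense subset of every open set, combined with the identifications $\Dn(f)=M\cap\Ddd(f)$ and the density of $M$ in $\Specd(M)$ recorded in the preliminaries; this is the cleanest possible route. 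Your second argument is in fact the same primality computation that sits inside the paper's proof, but you carry it out directly against the basic open sets $\Ddd(g)$ rather than packaging it as ``$\ker(\psi)\subset\gtp$'' and citing an earlier lemma — so it makes explicit what the reference to \cite{fg2} is doing. Either version is a legitimate, and arguably preferable, replacement for the paper's proof.
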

\begin{proof}
Denote $C:=\cl_M(D)\subset \betad(M)$ and let $\gtp\in\Ddd(f)$. We claim: \em the kernel of the homomorphism $\psi:{\mathcal S}^\diam(M)\to{\mathcal S}^\diam(C),\ g\mapsto g|_C$ is contained in $\gtp$\em. 

Let $h\in{\mathcal S}^\diam(M)$ be such that $h|_C=0$. As $hf=0$ and $\gtp\in\Ddd(f)$, we deduce $h\in\gtp$.

Now, by \cite[Lem.4.3]{fg2} $\gtp\in\cl_{\Specd(M)}(C)$ and we conclude $\Ddd(f)\subset\cl_{\Specd(M)}(C)$, as required.
\end{proof} 

\begin{lem}\label{ccD}
Let $M\subset\R^m$ be a semialgebraic set and let $g\in{\mathcal S}^\diam(M)$. Let $E_1,\ldots,E_s$ be the connected components of $D(g)$. Then the connected components of $\Ddd(g)$ are ${\mathcal V}_i:=\cl_{\Specd(M)}(E_i)\cap\Ddd(g)$ for $i=1,\ldots,s$. In addition, ${\mathcal V}_i=\Ddd(g)\setminus\bigcup_{j\neq i}\cl_{\Specd(M)}(E_j)$ is an open subset of $\Specd(M)$.
\end{lem}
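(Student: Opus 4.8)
\textbf{Proof proposal for Lemma \ref{ccD}.}

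The plan is to reduce everything to the description of $\Ddd(g)$ as a disjoint union of closed-in-$\Ddd(g)$ pieces indexed by the connected components of $D(g)$, and then to check that these pieces are genuinely the connected components by exhibiting them as both open and closed in $\Ddd(g)$, with each piece connected because it is the closure (in the appropriate subspace) of a connected semialgebraic set. First I would recall that $D(g)=E_1\sqcup\cdots\sqcup E_s$ with each $E_i$ open, closed and connected in $D(g)$, and that by Lemma \ref{sep} (applied to the prime ideals sitting over points in distinct $E_i$, or more directly via the idempotent-type functions that $D(g)$ carries) there are functions $e_i\in{\mathcal S}^\diam(M)$ with $D(e_i)=E_i$ and $e_ie_j=0$ for $i\neq j$; indeed $\dist(\cdot,\bigcup_{j\neq i}\cl_M(E_j))$ restricted suitably does the job after the usual boundedness trick, since $\cl_M(E_i)\cap\cl_M(E_j)\subset \Zn(g)$. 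Then $\Ddd(g)=\bigsqcup_i\Ddd(g)\cap\Ddd(e_i)$: any $\gtp\in\Ddd(g)$ fails to contain $g=e_1+\cdots+e_s$ (up to units coming from $g$ being, on each $E_i$, comparable to $e_i$), hence fails to contain some $e_i$, and it cannot fail to contain two of them because $e_ie_j=0\in\gtp$. This already shows each $\Ddd(g)\cap\Ddd(e_i)$ is open and closed in $\Ddd(g)$.

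Next I would identify $\Ddd(g)\cap\Ddd(e_i)$ with $\cl_{\Specd(M)}(E_i)\cap\Ddd(g)$ and with $\Ddd(g)\setminus\bigcup_{j\neq i}\cl_{\Specd(M)}(E_j)$. For the first: by Lemma \ref{neigh}, $\Ddd(e_i)\subset\cl_{\Specd(M)}(E_i)$, so $\Ddd(g)\cap\Ddd(e_i)\subset\cl_{\Specd(M)}(E_i)\cap\Ddd(g)$; conversely a prime in $\cl_{\Specd(M)}(E_i)$ contains $\ker({\mathcal S}^\diam(M)\to{\mathcal S}^\diam(\cl_M E_i))$ (this is the content of \cite[Lem.4.3]{fg2} used in Lemma \ref{neigh}), hence contains every $e_j$ with $j\neq i$ since $E_j\cap\cl_M(E_i)=\varnothing$ forces $e_j|_{\cl_M E_i}=0$; if such a prime also lies in $\Ddd(g)$ it cannot contain $e_i$ (else it would contain $g$ on account of $e_1+\cdots+e_s$ generating $g$ up to the unit), giving the reverse inclusion. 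For the complement description: $\gtp\in\Ddd(g)$ lies outside every $\cl_{\Specd(M)}(E_j)$, $j\neq i$, iff $\gtp$ contains every $e_j$, $j\neq i$ (using again that $\cl_{\Specd(M)}(E_j)\subset\Zzd(e_j)$, which holds because $e_j$ vanishes on $\cl_M(E_j)$, together with the reverse inclusion just discussed), iff the one index it misses is $i$, iff $\gtp\in\Ddd(g)\cap\Ddd(e_i)$. In particular $\Ddd(g)\setminus\bigcup_{j\neq i}\cl_{\Specd(M)}(E_j)$ is open in $\Specd(M)$, being the complement in the open set $\Ddd(g)$ of a finite union of closed sets.

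Finally, connectedness of each ${\mathcal V}_i$: since $E_i$ is connected and dense in $\cl_{\Specd(M)}(E_i)$ (recall $M$, hence $E_i$, is dense in $\Specd$ of its ring of functions, and a dense subset forces the ambient space to be connected), $\cl_{\Specd(M)}(E_i)$ is connected; and ${\mathcal V}_i$ is squeezed between the connected dense subset $E_i$ (which lies in $\Ddd(g)$ because $g$ is nowhere zero on $E_i$) and $\cl_{\Specd(M)}(E_i)$, so ${\mathcal V}_i$ is connected as well. Being also open and closed in $\Ddd(g)$ by the first paragraph, the ${\mathcal V}_i$ are exactly the connected components of $\Ddd(g)$, which is the assertion. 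The main obstacle I anticipate is the bookkeeping around the relation between $g$ and the separating functions $e_i$: one must be careful that ``$\gtp\in\Ddd(g)$'' really does force $\gtp$ to miss exactly one $e_i$, which requires knowing $Z(g)=\bigcup_i Z(e_i)$ on $M$ and transferring this to the spectrum via Lemma \ref{suma}(i) (comparability of $|g|$ with a suitable combination of the $|e_i|$ on the relevant closed pieces); once this comparability is in hand, everything else is the formal manipulation above.
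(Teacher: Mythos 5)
Your decomposition approach (building separating functions $e_i$ with $D(e_i)=E_i$, $e_ie_j=0$, then partitioning $\Ddd(g)$ by the $\Ddd(e_i)$) is a natural idea and genuinely different from the paper's argument, which never constructs any separating functions: the paper gets $\Ddd(g)=\bigcup_i{\mathcal V}_i$ in one line from Lemma \ref{neigh} (since $\Ddd(g)\subset\cl_{\Specd(M)}(D(g))$, one may intersect and distribute closure over the finite union) and gets disjointness from \cite[Lem.4.5]{fg2}, which relates $\cl_{\Specd(M)}(A)\cap\cl_{\Specd(M)}(B)$ to $\cl_{\Specd(M)}(\cl_M(A)\cap\cl_M(B))$, combined with $\cl_M(E_j)\cap\cl_M(E_k)\cap D(g)=\varnothing$. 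Connectedness is handled exactly as you do, via $E_i\subset{\mathcal V}_i\subset\cl_{\Specd(M)}(E_i)$.

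However, there is a genuine gap at the heart of your argument, namely the step ``$\gtp\in\Ddd(g)$ forces $e_i\notin\gtp$ for some $i$.'' With $e_i$ a distance function this requires $g\in\sqrt{(e_1+\cdots+e_s)}$, i.e.\ a \emph{global} Łojasiewicz-type inequality $|g|^k\leq C\,(e_1+\cdots+e_s)$ on $M$; this is not automatic for an arbitrary semialgebraic $M$ and cannot be replaced by a $z$-ideal argument, which you correctly note is unavailable in the ${\mathcal S}^*$-case. (Also, $\dist(\cdot,\bigcup_{j\neq i}\cl_M(E_j))$ does not actually satisfy $D(e_i)=E_i$: its zero set omits the part of $Z(g)$ not lying in any $\cl_M(E_j)$; you would need $\dist(\cdot,M\setminus E_i)$.) You flag this as ``the main obstacle'' but treat it as bookkeeping; it is really the crux, and as stated the step fails. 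The clean repair is to choose the separating functions so that the relation with $g$ is an identity rather than an inequality: since $\cl_M(E_i)\setminus E_i\subset Z(g)$, the rule $e_i|_{\cl_M(E_i)}:=g|_{\cl_M(E_i)}$, $e_i|_{M\setminus E_i}:=0$ defines $e_i\in{\mathcal S}^\diam(M)$ with $D(e_i)=E_i$, $e_ie_j=0$, and $g=e_1+\cdots+e_s$ on the nose; then the prime-ideal bookkeeping is immediate with no comparison lemma needed, and the rest of your proof goes through as written.
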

\begin{proof}
It is enough to show: \em each ${\mathcal V}_i$ is connected, $\Ddd(g)=\bigcup_{i=1}^s{\mathcal V}_i$ and ${\mathcal V}_j\cap{\mathcal V}_k=\varnothing$ if $j\neq k$\em. 

As $E_i$ is connected, ${\mathcal V}_i=\cl_{\Specd(M)}(E_i)\cap\Ddd(g)$ is also connected. By Lemma \ref{neigh}
\begin{multline*}
\Ddd(g)=\cl_{\Specd(M)}(\Ddd(g))\cap\Ddd(g)\\
=\cl_{\Specd(M)}\Big(\bigcup_{i=1}^sE_i\Big)\cap\Ddd(g)
=\bigcup_{i=1}^s\cl_{\Specd(M)}(E_i)\cap\Ddd(g)=\bigcup_{i=1}^s{\mathcal V}_i.
\end{multline*}

If $j\neq k$, we have by \cite[Lem.4.5]{fg2}
\begin{equation*}
\begin{split}
{\mathcal V}_j\cap{\mathcal V}_k&=\cl_{\Specd(M)}(E_j)\cap\cl_{\Specd(M)}(E_k)\cap\Ddd(g)\\
&=\cl_{\Specd(M)}(\cl(E_j)\cap\cl(E_k))\cap\Ddd(g)\\
&=\cl_{\Specd(M)}(\cl(E_j)\cap\cl(E_k)\cap D(g))\cap\Ddd(g)=\varnothing,
\end{split}
\end{equation*}
as required.
\end{proof}

\begin{cor}[Local connectedness of Zariski spectra]\label{lczs}
Let $M\subset\R^m$ be a semialgebraic set. Then $\Specd(M)$ is locally connected.
\end{cor}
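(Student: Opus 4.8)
The plan is to deduce local connectedness of $\Specd(M)$ directly from Lemma \ref{ccD} together with Lemma \ref{neigh}. Recall that a topological space $X$ is locally connected if and only if the connected components of every open subset of $X$ are open in $X$; equivalently, it suffices to exhibit, for each point $\gtp$ and each open neighborhood $W$ of $\gtp$, a connected open neighborhood of $\gtp$ contained in $W$. I will use the second formulation, since the basic open sets $\Ddd(g)$ with $g\in{\mathcal S}^\diam(M)$ form a basis of the Zariski topology of $\Specd(M)$.

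First I would fix $\gtp\in\Specd(M)$ and an open neighborhood $W$ of $\gtp$. Shrinking $W$, I may assume $W=\Ddd(g)$ for some $g\in{\mathcal S}^\diam(M)$ with $\gtp\notin\Zzd(g)$, i.e. $g\notin\gtp$. Now apply Lemma \ref{ccD}: writing $E_1,\ldots,E_s$ for the (finitely many, by cell decomposition) connected components of the semialgebraic set $D(g)\subset M$, the sets ${\mathcal V}_i=\cl_{\Specd(M)}(E_i)\cap\Ddd(g)$ are connected, they cover $\Ddd(g)$, they are pairwise disjoint, and — crucially — each ${\mathcal V}_i=\Ddd(g)\setminus\bigcup_{j\neq i}\cl_{\Specd(M)}(E_j)$ is open in $\Specd(M)$. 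Since the ${\mathcal V}_i$ partition $\Ddd(g)$ and $\gtp\in\Ddd(g)$, there is a unique index $i_0$ with $\gtp\in{\mathcal V}_{i_0}$. Then ${\mathcal V}_{i_0}$ is a connected open neighborhood of $\gtp$ contained in $W=\Ddd(g)$, which is exactly what is needed.

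Thus every point has a neighborhood basis of connected open sets (namely the sets ${\mathcal V}_{i_0}$ attached to basic neighborhoods $\Ddd(g)$), so $\Specd(M)$ is locally connected. The only point requiring a word of care is the finiteness of the number of connected components of $D(g)$, which guarantees the finite union in Lemma \ref{ccD} makes sense and that the complements $\Ddd(g)\setminus\bigcup_{j\neq i}\cl_{\Specd(M)}(E_j)$ are finite intersections of open sets, hence open; this finiteness is standard for semialgebraic sets (cell decomposition, as used in Lemma \ref{ss}). Since all the substantive work has already been done in Lemmas \ref{neigh} and \ref{ccD}, I expect no real obstacle here — the corollary is essentially a one-line unwinding of the definition of local connectedness against Lemma \ref{ccD}.
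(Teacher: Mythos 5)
Your argument is correct and follows exactly the paper's own proof: pass to a basic open neighborhood $\Ddd(g)$, invoke Lemma \ref{ccD} to decompose it into finitely many open connected pieces ${\mathcal V}_i$, and take the one containing $\gtp$. The only cosmetic difference is that you spell out the role of finiteness of the components of $D(g)$, which the paper leaves implicit.
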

\begin{proof}
Let $\gtp\in\Specd(M)$ and let ${\mathcal W}\subset\Specd(M)$ be an open neighborhood of $\gtp$. Let $g\in{\mathcal S}^\diam(M)$ be such that $\gtp\in\Ddd(g)\subset{\mathcal W}$. By Lemma \ref{ccD} $\Ddd(g)$ has finitely many connected components ${\mathcal V}_i$, which are open subsets of $\Specd(M)$. We may assume $\gtp\in{\mathcal V}_1$, which is a connected open subset of $\Specd(M)$ contained in ${\mathcal W}$, as required.
\end{proof}

The following result shows (among other things) that spectral maps associated to semialgebraic $d$-branched coverings are separated.

\begin{prop}[Separated spectral map]\label{sep2}
Let $\pi:M\to N$ be a semialgebraic $d$-branched covering. Then 
\begin{itemize}
\item[(i)] $\varphi^\diam_\pi:{\mathcal S}^\diam(N)\to{\mathcal S}^\diam(M)$ is an integral homomorphism.
\item[(ii)] $\Specd(\pi)$ is a finite quasi-covering.
\item[(iii)] If $\gtm^\diam\in\betad(M)$, we have $\Specd(\pi)(\gtm^\diam)\in\betad(N)$. In addition, $\Specd(\pi)|_{\betad(M)}=\betad(\pi)$.
\item[(iv)] If $\gtn^\diam\in\betad(N)$, it holds $\Specd(\pi)^{-1}(\gtn^\diam)$ is a finite subset of $\betad(M)$.
\item[(v)] If $y\in N$ and $\pi^{-1}(y):=\{x_1,\ldots,x_r\}$, we have $\Specd(\pi)^{-1}(\gtn^\diam_y):=\{\gtm^\diam_{x_1},\ldots,\gtm^\diam_{x_r}\}$.
\end{itemize}
\end{prop}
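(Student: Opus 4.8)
The plan is to establish the five assertions roughly in the stated order, using the symmetric-polynomial machinery of Lemma~\ref{sym} as the main engine. First I would prove (i): fix $f\in{\mathcal S}^\diam(M)$ and consider, for a semialgebraic $d$-branched covering $\pi$, the elementary symmetric polynomials $\sigma_1,\ldots,\sigma_d\in\R[\x_1,\ldots,\x_d]$. By Lemma~\ref{sym} each $\sigma_j(f)$ lies in ${\mathcal S}^\diam(N)$, and the monic polynomial
\[
P(\t):=\prod_{k=1}^{r}(\t-f(x_k))^{b_\pi(x_k)}=\t^d-\sigma_1(f)(\pi(x))\t^{d-1}+\cdots+(-1)^d\sigma_d(f)(\pi(x))
\]
has $\varphi^\diam_\pi(\sigma_j(f))$ as coefficients and vanishes at $f$ when evaluated pointwise on $M$ (for $x\in M$, $f(x)$ is one of the roots $f(x_1),\ldots,f(x_r)$). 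Hence $f$ is integral over $\varphi^\diam_\pi({\mathcal S}^\diam(N))$, so $\varphi^\diam_\pi$ is an integral homomorphism. One subtlety to check carefully is that $P(f)=0$ as an element of ${\mathcal S}^\diam(M)$, i.e.\ that the identity holds at every point, which is immediate from the factorization of $P$.

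Next, for (ii), I would assemble the finite-quasi-covering property from the pieces already available: $\Specd(\pi)$ is open, closed and surjective by Lemma~\ref{propern}(i) (since $\pi$ is open, closed and surjective as a branched covering), and its fibers are finite because an integral extension is a finitely-generated-module-like situation — more precisely, an integral homomorphism between rings induces a spectral map with finite fibers over each prime once one knows the fiber ring is a finite-dimensional algebra; here the cleaner route is to combine (iv) and (v) below with the chain structure of specializations. The only genuinely new thing to verify is that $\Specd(\pi)$ is \emph{separated}: given $\gtp_1\neq\gtp_2$ in the same fiber $\Specd(\pi)^{-1}(\gtq)$, they are incomparable (two comparable primes in the same fiber of an integral extension would force equality, by lying-over/incomparability for integral extensions, \cite[Ch.5]{am}), so Lemma~\ref{sep} produces $f_1,f_2\in{\mathcal S}^\diam(M)$ with $f_1f_2=0$ and $\gtp_i\in\Ddd(f_i)$, giving disjoint open neighborhoods $\Ddd(f_1),\Ddd(f_2)$. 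This separation step, resting on incomparability in integral extensions plus Lemma~\ref{sep}, is the conceptual heart of the proposition and the part I expect to need the most care.

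For (iii), given $\gtm^\diam\in\betad(M)$, integrality of $\varphi^\diam_\pi$ together with lying-over forces $(\varphi^\diam_\pi)^{-1}(\gtm^\diam)$ to be maximal in ${\mathcal S}^\diam(N)$ (the image of a maximal ideal under contraction along an integral extension is maximal, \cite[Ch.5]{am}); hence $\Specd(\pi)(\gtm^\diam)\in\betad(N)$, and since $\Specd(\pi)$ maps $\betad(M)$ into $\betad(N)$ and agrees with ${\tt r}_N\circ\Specd(\pi)$ there (${\tt r}_N$ being a retraction onto $\betad(N)$), we get $\Specd(\pi)|_{\betad(M)}=\betad(\pi)$. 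For (iv), fix $\gtn^\diam\in\betad(N)$; any $\gtp\in\Specd(\pi)^{-1}(\gtn^\diam)$ has $(\varphi^\diam_\pi)^{-1}(\gtp)=\gtn^\diam$ maximal, so by \cite{cc} (Gelfand ring, unique maximal ideal above $\gtp$) and the going-up property for integral extensions $\gtp$ must itself be maximal, i.e.\ in $\betad(M)$; finiteness of the set of such $\gtp$ then follows because the fibre ring ${\mathcal S}^\diam(M)\otimes_{{\mathcal S}^\diam(N)}\kappa(\gtn^\diam)$ is integral and reduced over a field, hence a finite product of fields by a dimension count using that $f^d$ lies in the subring generated by $\varphi^\diam_\pi$-images (each $f$ satisfies a monic degree-$d$ relation), so the fibre has at most $d$ points.

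Finally, (v): for $y\in N$ with $\pi^{-1}(y)=\{x_1,\ldots,x_r\}$ I would show $\Specd(\pi)^{-1}(\gtn^\diam_y)=\{\gtm^\diam_{x_1},\ldots,\gtm^\diam_{x_r}\}$. The inclusion $\supseteq$ is clear since $\Specd(\pi)|_M=\pi$. For $\subseteq$, by (iv) any $\gtp$ in this fibre is some $\gtm^\diam\in\betad(M)$; using that $\betad(M)$ is a compact Hausdorff space and the points of $M$ are dense with $\betad(\pi)$ continuous and proper (here $\pi$ proper follows from closedness and finiteness of fibres, and Lemma~\ref{propern}(ii) gives $\Specd(\pi)(\partial M)=\partial N$, so $\Specd(\pi)^{-1}(\gtn^\diam_y)\cap\partial M=\varnothing$), the only maximal ideals above $\gtn^\diam_y$ are those associated to points of $M$, namely the $\gtm^\diam_{x_i}$. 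I expect (ii)'s separatedness and the reduced-fibre-ring argument underlying (iv) to be the main obstacles; everything else is standard integral-extension bookkeeping combined with the Gelfand-ring retraction and the topological lemmas already proved.
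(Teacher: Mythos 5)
Your treatment of the conceptual core matches the paper: separatedness of $\Specd(\pi)$ falls out of incomparability for integral extensions \cite[Cor.5.9]{am} combined with Lemma~\ref{sep}, and (i) is exactly the paper's symmetric-polynomial argument via Lemma~\ref{sym}. For (iii)--(v) you lean on lying-over, incomparability and the Gelfand structure where the paper argues more topologically: for (iii) it simply uses that a closed continuous map sends closed points to closed points, and for (iv) it notes that the fiber over a closed point is closed (hence equals $\bigcup_i\cl_{\Specd(M)}(\gtp_i)$), and separatedness then forces each $\gtp_i$ to coincide with the unique maximal ideal above it. Both routes are sound, and your commutative-algebra version is arguably more standard; this is a legitimate variation, not a defect.

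The genuine gap is \emph{finiteness of the fibers} of $\Specd(\pi)$, needed for (ii). The paper handles this by citing Schwartz \cite[Prop.11]{s3}, which gives finite fibers of the spectral map induced by a semialgebraic map with finite fibers. Your proposed substitute does not go through as written. In (ii) you defer to (iv) and (v), but those only treat fibers over \emph{maximal} ideals of ${\mathcal S}^\diam(N)$ (resp.\ over points of $N$); a general prime $\gtq\in\Specd(N)$ is not covered, and the primes of a real closed ring contained in a fixed maximal ideal need not form a chain, so there is no easy reduction. Moreover, the intermediate claim in your (iv) that the fibre ring is ``integral and reduced over a field, hence a finite product of fields'' is false without further input --- $\prod_{n\in\N}\R$ is integral and reduced over $\R$ with infinitely many primes. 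What saves the day is the \emph{uniform} degree-$d$ bound supplied by (i): every $f\in{\mathcal S}^\diam(M)$ satisfies a monic polynomial of degree $d$ over $\varphi^\diam_\pi({\mathcal S}^\diam(N))$. Given that, one can show the fiber over any $\gtq\in\Specd(N)$ has at most $d$ points by prime avoidance and CRT: $d+1$ pairwise incomparable primes over $\gtq$ would yield an element whose images in the corresponding residue fields are $0,1,\dots,d$ (distinct, since the residue field contains $\R$), contradicting that it is a root of a monic polynomial of degree $d$ with coefficients in the residue field of $\gtq$. You gesture at this with ``$f^d$ lies in the subring generated by $\varphi^\diam_\pi$-images,'' but that is not what integrality gives ($f^d$ is a linear combination of $1,f,\dots,f^{d-1}$ over the base, not an element of it), and the counting step is not carried out. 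Either cite \cite[Prop.11]{s3}, as the paper does, or supply the prime-avoidance argument in full.
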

\begin{proof}
(i) Let $f\in{\mathcal S}^\diam(M)$. Let $\sigma_k\in\Z[{\tt x}_1,\dots,{\tt x}_d]$ be the $k$th elementary symmetric form (for $1\leq k\leq d$) and consider the functions $\sigma_k(f):N\to\R,\ y\mapsto \sigma_k(f(x_{1}),\dots,f(x_{d}))$, where $\pi^{-1}(y):=\{x_1,\dots,x_d\}$. By Lemma \ref{sym} each $\sigma_k(f)\in{\mathcal S}^\diam(N)$. As $f$ is a root of the polynomial 
$$
{\tt p}({\tt t}):={\tt t}^d+\sum_{k=1}^d(-1)^k\sigma_k(f){\tt t}^{d-k}\in{\mathcal S}^\diam(N)[{\tt t}],
$$
we conclude $f$ is integral over ${\mathcal S}^\diam(N)$ via $\varphi^\diam_{\pi}$. This means that $\varphi^\diam_{\pi}$ is an integral homomorphism. 

(ii) By Lemma \ref{propern}(ii) the spectral map $\Specd(\pi)$ is open, closed and surjective. In addition, $\Specd(\pi)$ has finite fibers by \cite[Prop.11]{s3}. We prove next that $\Specd(\pi)$ is separated.

Given $\gtp_1,\gtp_2\in\Specd(M)$ with $\Specd(\pi)(\gtp_1)=\Specd(\pi)(\gtp_2)$ we have by (i) and \cite[Cor.5.9]{am} $\gtp_1\not\subset\gtp_2$ and $\gtp_2\not\subset\gtp_1$. Thus, by Lemma \ref{sep} $\gtp_1$ and $\gtp_2$ have disjoint open neighborhoods.

(iii) As $\Specd(\pi)$ is closed, it maps closed points to closed points and the statement follows.

(iv) By Lemma \ref{opcl}(i) we have 
$$
\cl_{\Specd(M)}(\Specd(\pi)^{-1}(\gtn^\diam))=\Specd(\pi)^{-1}(\cl_{\Specd(M)}(\gtn^\diam))=\Specd(\pi)^{-1}(\gtn^\diam).
$$
Write $\Specd(\pi)^{-1}(\gtn^\diam):=\{\gtp_1,\ldots,\gtp_r\}$. We have $\{\gtp_1,\ldots,\gtp_r\}=\bigcup_{i=1}^r\cl_{\Specd(M)}(\gtp_i)$. Let $\gtm^\diam_i$ be the unique maximal ideal of ${\mathcal S}^\diam(M)$ that contains $\gtp_i$. Then $\gtm_i^\diam\in\cl_{\Specs(M)}(\gtp_i)\subset\{\gtp_1,\ldots,\gtp_r\}$ for $i=1,\ldots,r$. As $\Specd(\pi)$ is separated, $\gtp_i\not\subset\cl_{\Specd(M)}(\gtp_j)$ if $i\neq j$. We conclude $\gtp_i=\gtm_i^\diam$ for $i=1,\ldots,r$, so $\Specd(\pi)^{-1}(\gtn^\diam)=\{\gtm_1^\diam,\ldots,\gtm_r^\diam\}\subset\betad(M)$.

(v) As $\pi:M\to N$ is a closed map with finite fibers, it is proper, so by Lemma \ref{propern}(ii) $\Specd(\pi)(\partial M)\cap N=\varnothing$. By (iv) $\Specd(\pi)^{-1}(\gtn_y^\diam)\subset\betad(M)$ is a finite set. Thus, $\Specd(\pi)^{-1}(\gtn_y^\diam)=\{\gtm_{x_1}^\diam,\ldots,\gtm_{x_r}^\diam\}$, as required. 
\end{proof}

Next result points out the good properties of the minimal elements of the collapsing set in the ${\mathcal S}$-case.

\begin{cor}[Minimal elements of the collapsing set]\label{minz} 
Let $\pi:M\to N$ be a semialgebraic $d$-branched covering. Then each minimal element $\gtP\in\Cc_{\Specs(\pi)}$ is a $z$-ideal of ${\mathcal S}(M)$.
\end{cor}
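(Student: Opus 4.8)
The claim to prove is that each minimal element $\gtP$ of the collapsing set $\Cc_{\Specs(\pi)}$ is a $z$-ideal of ${\mathcal S}(M)$. The natural strategy is to give an explicit description of the minimal elements of $\Cc_{\Specs(\pi)}$ in terms of minimal prime ideals of ${\mathcal S}(M)$ and then invoke Lemma~\ref{min}(ii). First I would observe, using Proposition~\ref{sep2}, that $\Specs(\pi)$ is a finite quasi-covering, so it makes sense to speak of its collapsing set; and that $\varphi^\diam_\pi$ is integral and satisfies going-down by Lemma~\ref{mintomin}(i). Since $\gtP\in\Cc_{\Specs(\pi)}$ is minimal, I would first localize the picture: the fiber of $\Specs(\pi)$ through $\gtP$ is a singleton $\{\gtP\}$, and I expect to show that the unique minimal prime $\gtq$ of ${\mathcal S}(N)$ contained in $\Specs(\pi)(\gtP)$ pulls back (via the going-down property) to a minimal prime $\gtp$ of ${\mathcal S}(M)$ with $\gtp\subset\gtP$.

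The heart of the argument is then to show that $\gtP$ is a \emph{sum} of minimal prime ideals of ${\mathcal S}(M)$, each one lying over $\gtq$. Concretely, let $\gtq=\Specs(\pi)(\gtP)\cap\{\text{the minimal prime below it}\}$, write $\gtp^{(1)},\ldots,\gtp^{(k)}$ for the minimal primes of ${\mathcal S}(M)$ lying over $\gtq$ (finitely many, by finiteness of fibers over minimal primes — a consequence of Lemma~\ref{mintomin}(ii) and integrality, cf. \cite[Ch.5]{am}), and consider $\gtp:=\gtp^{(1)}+\cdots+\gtp^{(k)}$. By Lemma~\ref{min}(ii), $\gtp$ is either ${\mathcal S}(M)$ or a prime $z$-ideal; it is proper here because all the $\gtp^{(i)}$ lie over the same $\gtq$ and hence all specialize inside $\Specs(\pi)^{-1}(\cl(\gtq))$, which contains $\gtP$. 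I would then argue $\gtp\subset\gtP$ (each $\gtp^{(i)}$ is a minimal prime contained in some prime lying over $\Specs(\pi)(\gtP)$, and since the fiber through $\gtP$ is a singleton and the primes over $\gtq$ in ${\mathcal S}(M)$ form chains specializing into the fiber, each $\gtp^{(i)}\subset\gtP$), and finally that $\gtp=\gtP$ by minimality of $\gtP$ in the collapsing set: the point is that $\gtp$, being a sum of \emph{all} the minimal primes lying over $\gtq$, already lies in $\Cc_{\Specs(\pi)}$ — its fiber under $\Specs(\pi)$ must be a singleton because any two distinct primes $\gtP_1,\gtP_2$ over $\Specs(\pi)(\gtP)$ would, by Lemma~\ref{sep} and the separatedness from Proposition~\ref{sep2}, be separated, contradicting that both contain $\gtp$ while $\gtp$ specializes into the fiber. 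Hence $\gtp\in\Cc_{\Specs(\pi)}$ with $\gtp\subseteq\gtP$, and minimality forces $\gtp=\gtP$, so $\gtP$ is a $z$-ideal.

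The step I expect to be the main obstacle is pinning down precisely that the minimal element $\gtP$ of $\Cc_{\Specs(\pi)}$ equals the sum of the minimal primes of ${\mathcal S}(M)$ lying over the corresponding minimal prime of ${\mathcal S}(N)$ — in other words, that $\gtP$ cannot be strictly larger than this sum while still being in the collapsing set. This requires carefully combining (a) the going-down property of $\varphi^\diam_\pi$ to produce minimal primes downstairs below $\gtP$, (b) the fact that prime ideals containing a given prime in ${\mathcal S}(M)$ form a chain (a real-closed-ring phenomenon recalled in Subsection~\ref{prelim}), which forces the relevant primes over $\gtq$ to be comparable once they sit in the same singleton fiber, and (c) the separatedness of $\Specs(\pi)$ from Proposition~\ref{sep2}(ii), which rules out distinct primes over the same downstairs prime both sitting below $\gtP$ unless they collapse. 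Once the identification $\gtP=\sum_i\gtp^{(i)}$ is secured, the $z$-ideal conclusion is immediate from Lemma~\ref{min}(ii). A cleaner alternative, if the sum description proves awkward, is to show directly that $\gtP$ is the pullback along $\varphi_\pi$ of a minimal prime of ${\mathcal S}(N)$ when the fiber is a singleton, and that such pullbacks of (minimal, hence $z$-) primes under the restriction-type homomorphism $\varphi_\pi$ are themselves $z$-ideals; I would first attempt the sum-of-minimal-primes route as above since it reduces everything to the already-established Lemma~\ref{min}.
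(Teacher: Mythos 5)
Your proposal follows essentially the same route as the paper's proof: start from a minimal prime $\gtQ_1$ of ${\mathcal S}(M)$ below $\gtP$, map it down to a minimal prime $\gtq$ of ${\mathcal S}(N)$ using Lemma~\ref{mintomin}(ii), collect the finitely many primes of ${\mathcal S}(M)$ lying over $\gtq$ (which are exactly the minimal primes in that fiber), show they all lie below $\gtP$, form their sum, apply Lemma~\ref{min}(ii) to conclude the sum is a prime $z$-ideal, and finally argue that this sum already belongs to $\Cc_{\Specs(\pi)}$ so that minimality of $\gtP$ forces equality. That is precisely the skeleton of the paper's argument.

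However, your justification of the crucial final step --- that the sum $\gtp=\gtQ_1+\cdots+\gtQ_\ell$ lies in the collapsing set --- contains a genuine gap. You claim that any two distinct primes $\gtP_1,\gtP_2$ in the fiber of $\Specs(\pi)(\gtp)$ ``both contain $\gtp$'', but there is no reason for a prime in that fiber to contain the \emph{sum} of all the $\gtQ_i$'s. What \emph{is} available is this: since $\gtq$ is a minimal prime of ${\mathcal S}(N)$ contained in $\Specs(\pi)(\gtp)$, going down (Lemma~\ref{mintomin}(i)) produces for each $\gtq'_1\in\Specs(\pi)^{-1}(\Specs(\pi)(\gtp))$ some prime in the fiber of $\gtq$ contained in $\gtq'_1$, i.e.\ $\gtQ_k\subset\gtq'_1$ for some $k$. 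Then $\gtq'_1$ and $\gtp$ are two primes both containing $\gtQ_k$, hence \emph{comparable} by the chain property of real closed rings; and two distinct comparable primes in the same fiber would contradict the separatedness of $\Specd(\pi)$ (Proposition~\ref{sep2}(ii) makes the fiber a trivially-ordered, i.e.\ discrete, subspace). So the correct conclusion is comparability through a common $\gtQ_k$, not containment of $\gtp$; your phrasing skips this and asserts something false. Your appeal to Lemma~\ref{sep} is also not the right tool here --- it produces separating open sets given incomparability, whereas what is needed is that separatedness of the spectral map \emph{rules out} proper inclusions within a fiber. Finally, a minor point: you speak of ``the unique minimal prime $\gtq$ of ${\mathcal S}(N)$ contained in $\Specs(\pi)(\gtP)$'', but uniqueness is neither established nor required; the paper simply pushes down an arbitrary minimal prime of ${\mathcal S}(M)$ lying below $\gtP$.
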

\begin{proof} 
If $\gtP$ is a minimal prime ideal of ${\mathcal S}(M)$, the statement follows for instance from \cite[Cor.4.7]{fe1}. Thus, we may assume $\gtP$ is not a minimal prime ideal.

Let $\gtQ_1$ be a minimal prime ideal of ${\mathcal S}(M)$ contained in $\gtP$. By Lemma \ref{mintomin}(ii) its image $\gtq:=\Specs(\pi)(\gtQ_1)\subset\Specs(\pi)(\gtP):=\gtp$ is a minimal prime ideal of ${\mathcal S}(N)$ and $\Specs(\pi)^{-1}(\gtp)=\{\gtP\}$ because $\gtP\in\Cc_{\Specs(\pi)}$. Write $\Specs(\pi)^{-1}(\gtq):=\{\gtQ_1,\dots,\gtQ_\ell\}$ for some $\ell\leq d$. As $\Specs(\pi)$ is separated, the fiber $\Specs(\pi)^{-1}(\gtq)$ has the trivial topology. We claim: \em $\gtQ_j$ is a minimal prime ideal of ${\mathcal S}(M)$ for $1\leq j\leq\ell$\em. 

Assume $\gtQ_j$ is not a minimal prime ideal of ${\mathcal S}(M)$ for some $j=2,\ldots,\ell$ and let $\gtP'$ be a prime ideal of ${\mathcal S}(M)$ strictly contained in $\gtQ_j$. Then $\Specs(\pi)(\gtP')\subset\Specs(\pi)(\gtQ_j)=\gtq$. As the latter is a minimal prime ideal of ${\mathcal S}(N)$, we have $\Specs(\pi)(\gtP')=\gtq$, that is, $\gtP'\in\Specs(\pi)^{-1}(\gtq)$. This is a contradiction because the fiber $\Specs(\pi)^{-1}(\gtq)$ does not contain a pair of prime ideals such that $\gtP'\subsetneq\gtQ_j$ (recall that $\Spec(\pi)$ is by Proposition \ref{sep2}(ii) a finite quasi-covering). 

We prove next: \em $\gtQ_j\subset\gtP$ for $1\leq j\leq\ell$\em.

As $\gtq=\Specs(\pi)(\gtQ_j)=\Specs(\pi)(\gtQ_1)\subset\Specs(\pi)(\gtP):=\gtp$ and $\Specs(\pi)$ is a closed continuous map, $\gtp\in\cl_{\Specs(N)}(\{\Specs(\pi)(\gtQ_j)\})=\Specs(\pi)(\cl_{\Specs(M)}(\{\gtQ_j\}))$. As $\Specs(\pi)^{-1}(\gtp)=\{\gtP\}$, this implies $\gtP\in\cl_{\Specs(M)}(\{\gtQ_j\})$, so $\gtQ_j\subset\gtP$. 

By Lemma \ref{min}(ii) the sum $\gtQ_1+\cdots+\gtQ_\ell$ is a prime $z$-ideal contained in $\gtP$. To prove that $\gtP$ is a prime $z$-ideal, it is enough to check: $\gtP=\gtQ_1+\cdots+\gtQ_\ell$. As $\gtP$ is a minimal element of $\Cc_{\Specs(\pi)}$, it suffices to show: $\gtQ_1+\cdots+\gtQ_\ell\in\Cc_{\Specs(\pi)}$. Denote $\gtq':=\Specs(\pi)(\gtQ_1+\cdots+\gtQ_\ell)$ and let us prove: \em $\gtQ_1+\cdots+\gtQ_\ell$ is the unique point in the fiber $\Specs(\pi)^{-1}(\gtq')$\em. 

Pick a point $\gtq'_1\in\Specs(\pi)^{-1}(\gtq')$. As $
\gtq=\Specs(\pi)(\gtQ_1)\subset\Specs(\pi)(\gtQ_1+\cdots+\gtQ_\ell)=\gtq'=\Specs(\pi)(\gtq'_1)$, there exists by Lemma \ref{mintomin}(i) a point in the fiber of $\gtq$ contained in $\gtq'_1$. Thus, $\gtQ_k\subset\gtq'_1$ for some index $1\leq k\leq\ell$. Consequently, $\gtq'_1$ and $\gtQ_1+\cdots+\gtQ_\ell$ are prime ideals of ${\mathcal S}(M)$ containing $\gtQ_k$. As the prime ideals of ${\mathcal S}(M)$ containing $\gtQ_k$ constitute a chain, either $\gtq'_1\subset\gtQ_1+\cdots+\gtQ_\ell$ or $\gtQ_1+\cdots+\gtQ_\ell\subset\gtq'_1$. As the fiber $\Specs(\pi)^{-1}(\gtq')$ has the trivial topology, $\gtq'_1=\gtQ_1+\cdots+\gtQ_\ell$, so $\gtQ_1+\cdots+\gtQ_\ell\in\Cc_{\Specs(\pi)}$, as required.
\end{proof}

\section{Proof of Theorem \ref{colapse}}\label{mu}

To get a better understanding of the finite quasi-covering $\Specs(\pi)$ induced by a semialgebraic branched covering $\pi:M\to N$ we prove Theorem \ref{colapse}, which provides a precise description of the subset $\Cc_{\Specs(\pi)}$. Its proof does not involve Theorem \ref{bc}. We need the following notion. 

\begin{defn}
Let $\pi:M\to N$ be a semialgebraic $d$-branched covering and let $b_{\pi}:M\to\Z$ be the branching index of $\pi$. We define the map
$$
\mu^\diam:{\mathcal S}^\diam(M)\to{\mathcal S}^\diam(N),\ f\mapsto\mu^\diam(f):=\tfrac{1}{d}\sigma_1(f)=\tfrac{1}{d}\sum_{x\in\pi^{-1}(y)}b_{\pi}(x)f(x),
$$
where $\sigma_1(\x_1,\ldots,\x_d):=\x_1+\cdots+\x_d$ is the first elementary symmetric form in $d$ variables. 
\end{defn}

\begin{remarks}\label{module}
(i) If $b_{\pi}(x)=1$ for each point in the fiber of a point $y\in N$, then $\pi^{-1}(y)$ consists of $d$ points, so $\mu^\diam(f)(y)$ is the arithmetic mean of the values of $f$ on the points of $\pi^{-1}(y)$. In general, $\mu^\diam(f)(y)$ is a weighted arithmetic mean of the values of $f$ on $\pi^{-1}(y)$.

(ii) The homomorphism $\varphi_{\pi}$ endows ${\mathcal S}^\diam(M)$ with a natural structure of ${\mathcal S}^\diam(N)$-module and the map $\mu^\diam:{\mathcal S}^\diam(M)\to{\mathcal S}^\diam(N)$ is a homomorphism of ${\mathcal S}^\diam(N)$-modules. 

For each $g\in{\mathcal S}^\diam(N)$ and each $y\in N$ we have
\begin{equation*}
\begin{split}
\big(\mu^\diam(g\circ\pi)\big)(y)&=\tfrac{1}{d}\sum_{x\in\pi^{-1}(y)}b_{\pi}(x)(g\circ\pi)(x)\\
&=\tfrac{1}{d}\sum_{x\in\pi^{-1}(y)}b_{\pi}(x)g(y)=g(y)\Big(\tfrac{1}{d}\sum_{x\in\pi^{-1}(y)}b_{\pi}(x)\Big)=g(y),
\end{split}
\end{equation*}
so $\mu^\diam(g\circ\pi)=g$.\qed
\end{remarks}

We also need the following result.
\begin{lem}\label{homeo:fe_FG} Let $N\subset M\subset \R^n$ be semialgebraic sets and let ${\tt j}:N\hookrightarrow M$ be the inclusion. Then there exists $h\in \mathcal{S}^\diam(M)$ with $Z(h)\subset\cl_M(N)$ nowhere dense in $\cl_M(N)$ such that 
$$
\Spec^\diam(N)\setminus \Specd({\tt j})^{-1}(\Zzd(h))=\Spec^\diam(N)\setminus \Zzd(h|_N)
$$
is homeomorphic to $\cl_{\Spec^\diam(M)}(N)\setminus \Zzd(h)$ via $\Specd({\tt j})$. In addition, if $N$ is locally compact, we may assume $Z(h)=\cl_M(N)\setminus N$. If $N$ is closed in $M$, then $\Spec^\diam(N)$ is homeomorphic to $\cl_{\Spec^\diam(M)}(N)$ via $\Specd({\tt j})$.
\end{lem}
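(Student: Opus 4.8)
First I would isolate the engine of the argument in a self-contained sub-claim: \emph{for a closed semialgebraic subset $L\subset M$, the map $\Specd({\tt i}_L)$ associated to the inclusion ${\tt i}_L:L\hookrightarrow M$ is a homeomorphism of $\Spec^\diam(L)$ onto $\cl_{\Spec^\diam(M)}(L)$}. Indeed, by \cite[Thm.3]{dk1} the restriction homomorphism $\varphi^\diam_{{\tt i}_L}:{\mathcal S}^\diam(M)\to{\mathcal S}^\diam(L),\ f\mapsto f|_L$, is surjective with kernel ${\mathfrak a}_L:=\{f:\ f|_L=0\}$, so $\Specd({\tt i}_L)$ is a closed embedding identifying $\Spec^\diam(L)$ with $V({\mathfrak a}_L)=\{\gtp\in\Spec^\diam(M):\ {\mathfrak a}_L\subset\gtp\}$. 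Now $V({\mathfrak a}_L)=\cl_{\Spec^\diam(M)}(L)$: if $\gtp\in\cl_{\Spec^\diam(M)}(L)$ and $f\notin\gtp$ then $\Ddd(f)$ is a neighbourhood of $\gtp$, so $D(f)\cap L=\Ddd(f)\cap L\neq\varnothing$ and $f\notin{\mathfrak a}_L$; the reverse inclusion is \cite[Lem.4.3]{fg2}, exactly as invoked in Lemma~\ref{neigh}. Taking $L=N$ when $N$ is closed gives the last assertion of the lemma, and $L=\overline N:=\cl_M(N)$ gives a homeomorphism $\Spec^\diam(\overline N)\to\cl_{\Spec^\diam(M)}(\overline N)$; here $\cl_{\Spec^\diam(M)}(\overline N)=\cl_{\Spec^\diam(M)}(N)$ because ${\tt j}_M$ is an embedding, so $\overline N=\cl_{\Spec^\diam(M)}(N)\cap M\subset\cl_{\Spec^\diam(M)}(N)$.

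Next I would choose $h$. Put $F:=\overline N\setminus N$, a semialgebraic subset of $M$ with empty interior in $\overline N$ since $N$ is dense in $\overline N$; by semialgebraic dimension theory its closure $F':=\cl_{\overline N}(F)$ is still nowhere dense in $\overline N$. As $F'$ is closed in $\overline N$, which is closed in $M$, the set $F'$ is closed in $M$, so there is $h\in{\mathcal S}^\diam(M)$ with $Z(h)=F'$ (a bounded multiple of $\dist(\cdot,F')$ serves in the ${\mathcal S}^*$-case). Then $Z(h)\subset\overline N$ is nowhere dense in $\overline N$, and $N_0:=D(h)\cap\overline N=\overline N\setminus F'$ is an open dense semialgebraic subset of $\overline N$ satisfying $N_0\subset N$ and $D(h)\cap N=N_0$. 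If moreover $N$ is locally compact then $N$ is open in $\overline N$, hence $F$ is already closed in $M$ and one may instead take $Z(h)=F=\cl_M(N)\setminus N$.

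With $h$ fixed, the general statement is reduced as follows. The equality $\Specd({\tt j})^{-1}(\Zzd(h))=\Zzd(h|_N)$ is automatic; and since $\Specd$ is functorial and ${\tt j}$ factors as $N\stackrel{{\tt i}}{\hookrightarrow}\overline N\hookrightarrow M$, the sub-claim (for $\overline N$) together with $\cl_{\Spec^\diam(M)}(N)=\cl_{\Spec^\diam(M)}(\overline N)$ reduces the lemma to showing that $\Specd({\tt i})$ restricts to a homeomorphism $\Spec^\diam(N)\setminus\Zzd(h|_N)\to\Spec^\diam(\overline N)\setminus\Zzd(h|_{\overline N})$. Both sides are spectra of localizations, $\Spec({\mathcal S}^\diam(N)_{h|_N})$ and $\Spec({\mathcal S}^\diam(\overline N)_{h|_{\overline N}})$, and the map between them is induced by the localized restriction homomorphism $\Theta:{\mathcal S}^\diam(\overline N)_{h|_{\overline N}}\to{\mathcal S}^\diam(N)_{h|_N}$, so it suffices to prove that $\Theta$ is an isomorphism. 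Injectivity: if $g\in{\mathcal S}^\diam(\overline N)$ and $gh^k$ vanishes on the dense subset $N$ of $\overline N$, then $gh^k=0$ in ${\mathcal S}^\diam(\overline N)$. Surjectivity: given $\phi\in{\mathcal S}^\diam(N)$, I would show that for $m$ large the function equal to $h^m\phi$ on $N$ and to $0$ on $F$ is a well-defined element $G\in{\mathcal S}^\diam(\overline N)$, so that $\phi=G|_N/h^m$ lies in the image of $\Theta$. This extension is $0$ on $F$ and on $N\setminus N_0$ (where $h$ vanishes), and its only delicate feature is continuity at the points of $F'=\overline N\setminus N_0$, which reduces to $h\to 0$ there: in the ${\mathcal S}^*$-case $\phi$ is bounded and already $m=1$ works and keeps $G$ bounded, while in the ${\mathcal S}$-case one uses a {\L}ojasiewicz inequality bounding the blow-up of $\phi$ near $F'$ by a negative power of $h$ (equivalently, the standard identification ${\mathcal S}(L)_{f}\cong{\mathcal S}(D_L(f))$ applied to $L=N$ and $L=\overline N$, which gives directly ${\mathcal S}(N)_{h|_N}\cong{\mathcal S}(N_0)\cong{\mathcal S}(\overline N)_{h|_{\overline N}}$). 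Composing the resulting homeomorphism for $N\hookrightarrow\overline N$ with the one for $\overline N\hookrightarrow M$ recovers $\Specd({\tt j})$ on $\Spec^\diam(N)\setminus\Zzd(h|_N)$, as required.

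The main obstacle is precisely the surjectivity of $\Theta$ in the ${\mathcal S}$-case, that is, controlling how fast a semialgebraic function on $N$ can grow as it approaches the possibly non-locally-closed part $F'$ of $\overline N$; this is where the semialgebraic setting is essential. The same is true of the (otherwise routine) topological input that the $\overline N$-closure of the nowhere dense set $\overline N\setminus N$ remains nowhere dense, which also fails for general topological spaces.
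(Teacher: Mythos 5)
Your proof is correct in its logical structure and reaches all parts of the statement, but it takes a genuinely different route from the paper's. The paper does not argue via localizations at all: it chooses $h=H|_M$ where $Z(H)=\cl_{\R^n}(\cl_{\R^n}(N)\setminus N)$ (closure taken in $\R^n$, not inside $\cl_M(N)$ as you do), and then imports the homeomorphism
$\Specd(N)\setminus\Specd({\tt j}_1)^{-1}(\Zzd(h|_{\cl_M(N)}))\cong\Specd(\cl_M(N))\setminus\Zzd(h|_{\cl_M(N)})$
as a black box: \cite[Lem.1.1]{fe3} for ${\mathcal S}$ and \cite[Thm.1.2]{fe3} for ${\mathcal S}^*$ (in the ${\mathcal S}^*$-case the paper first deletes $\Zz=\cl_{\Speca(\cl_M(N))}(\cl_M(N)\setminus N)$ and then enlarges to $\Zzd(h|_{\cl_M(N)})$). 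It then composes with $\Specd(\cl_M(N))\cong\cl_{\Specd(M)}(N)$, citing \cite[Cor.4.6]{fg2}, which is exactly your closed sub-claim. So the overall factorization through $\overline N$ and the treatment of the closed and locally compact cases agree with the paper; what differs is how the dense-open step $N\hookrightarrow\overline N$ is handled.

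Where your proof carries a real debt that the paper does not: the surjectivity of $\Theta$ in the ${\mathcal S}$-case. You assert, without proof or reference, that for $\phi\in{\mathcal S}(N)$ some power $h^m\phi$ extends by zero across $F=\overline N\setminus N$ (equivalently, the ``standard identification'' ${\mathcal S}(L)_{f}\cong{\mathcal S}(D_L(f))$). This is a {\L}ojasiewicz-type statement for a possibly non-locally-compact, possibly unbounded semialgebraic set, and it requires a \emph{uniform} exponent $m$ valid near all of $F$ simultaneously; it is true, but it is not routine and is precisely the technical content outsourced by the paper to \cite[Lem.1.1]{fe3}. As written your proof has a gap here that needs a citation or a separate lemma. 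Your ${\mathcal S}^*$-case, by contrast, is complete and arguably more elementary than the paper's (the bounded extension $G$ with $m=1$ is clean, and does not need \cite[Thm.1.2]{fe3}). Two minor points worth tightening: that $\cl_{\overline N}(F)$ is still nowhere dense in $\overline N$ is a semialgebraic fact (dimension is preserved under closure), not a general topological one, as you rightly flag; and the identification $\Spec(A_f)\cong\Ddd(f)$ you use implicitly should be stated, since it is the bridge between the algebraic isomorphism $\Theta$ and the topological conclusion about $\Specd({\tt j})$.
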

\begin{proof}
Let $H\in \mathcal{S}^*(\R^n)$ be such that $Z(H)=\cl_{\R^n}(\cl_{\R^n}(N)\setminus N)$ and define $h:=H|_{M}\in {\mathcal S}^*(M)$. The difference $\cl_{\R^n}(N)\setminus Z(H)=N\setminus Z(H)$ is a dense subset of $N$ (see \cite[\S 2.2]{fe3}). In particular, $\cl_M(N)\setminus Z(h)=N\setminus Z(H)$ is also a dense subset of $\cl_M(N)$.

Let ${\tt j}_1:N\hookrightarrow\cl_M(N)$ be the inclusion. We claim:\em 
$$
\Specd({\tt j}_1)|:\Specd(N)\setminus\Specd({\tt j}_1)^{-1}(\Zzd(h|_{\cl_{M}(N)}))\to \Specd(\cl_M(N))\setminus\Zzd(h|_{\cl_{M}(N)})
$$
is a homeomorphism\em. 

We provide different proofs of the claim for the $\mathcal{S}$-case and the $\mathcal{S}^*$-case. The claim for the $\mathcal{S}$-case is a straightforward consequence of \cite[Lem.1.1]{fe3}. We approach next the $\mathcal{S}^*$-case. Denote $\Zz:=\cl_{\Speca(\cl_M(N))}(\cl_M(N)\setminus N)$. By \cite[Thm.1.2]{fe3} the restriction
$$
\Speca(N)\setminus\Speca({\tt j}_1)^{-1}(\Zz)\to\Speca(\cl_M(N))\setminus\Zz,
$$
of $\Speca({\tt j}_1)$ to $\Speca(N)\setminus\Speca({\tt j}_1)^{-1}(\Zz)$ is a homeomorphism. As $\Zz\subset \Zz^*(h|_{\cl_M(N)})$, also the restriction 
$$
\Speca({\tt j}_1)|:\Speca(N)\setminus \Speca({\tt j}_1)^{-1}(\Zz^*(h|_{\cl_M(N)}))\rightarrow \Speca(\cl_M(N))\setminus\Zz^*(h|_{\cl_M(N)})
$$ 
is a homeomorphism. Note that $\Speca({\tt j}_1)^{-1}(\Zz^*(h|_{\cl_{M}(N)}))=\Zz^*(h|_N)$. 

Next, let ${\tt j}_2:\cl_M(N)\hookrightarrow M$ be the inclusion. As $\cl_M(N)$ is closed in $M$, $\Speca(\cl_M(N))$ is by \cite[Cor.4.6]{fg2} homeomorphic to $\cl_{\Speca(M)}(\cl_M(N))=\cl_{\Speca(M)}(N)$ via $\Speca({\tt j}_2)$. Thus,
$$
\Speca({\tt j}_2)|:\Speca(\cl_M(N))\setminus\Zz^*(h|_{\cl_M(N)})\to\cl_{\Speca(M)}(N)\setminus\Zz^*(h)
$$
is a homeomorphism and $\Speca({\tt j})|=\Speca({\tt j}_2)|\circ \Speca({\tt j}_1)|$ is a homeomorphism too. 

If $N$ is locally compact, $\cl_{\mathbb{R}^n}(N)\setminus N$ is a closed subset of $\R^n$. Thus, $Z(h)=Z(H)\cap M=(\cl_{\mathbb{R}^n}(N)\setminus N)\cap M=\cl_M(N)\setminus N$. Finally, if $N$ is closed in $M$, then $\cl_M(N)=N$ and $\Spec^\diam(N)$ is by \cite[Cor.4.6]{fg2} homeomorphic to $\cl_{\Spec^\diam(M)}(N)$ via $\Specd({\tt j})$, as required.
\end{proof}

We are ready to prove Theorem \ref{colapse}.

\begin{proof}[Proof of Theorem \em \ref{colapse}]
Consider the commutative diagram
$$
\xymatrix{
{\mathcal S}^*(N)\ar[r]^{\varphi^*_\pi}\ar@{^{(}->}[d]&{\mathcal S}^*(M)\ar@{^{(}->}[d]\\
{\mathcal S}(N)\ar[r]^{\varphi_\pi}&{\mathcal S}(M)
}
$$
As $\pi$ is surjective, $f\in{\mathcal S}(N)$ is bounded if and only if $\varphi_\pi(f)=f\circ\pi\in{\mathcal S}(M)$ is bounded. If $\gtq\in\Specs(M)$, then
$$
\Speca(\pi)(\gtq\cap{\mathcal S}^*(M))=(\varphi_\pi^*)^{-1}(\gtq\cap{\mathcal S}^*(M))=\varphi_\pi^{-1}(\gtq)\cap{\mathcal S}^*(N)=\Specs(\pi)(\gtq)\cap{\mathcal S}^*(N).
$$
Along the proof we will make use of Remark \ref{r:maximal} without mention. Statements (iii) and (iv) follow from the equality $\Cc_{\betad(\pi)}=\Cc_{\Specd(\pi)}\cap\betad(M)$ (that follows from Proposition \ref{sep2}(iii) and (iv)). So, let us prove (i) and (ii). Inside the proof of (i) we make a reduction (see \ref{mec} and \ref{mec2}) of the ${\mathcal S}^*$-case to the ${\mathcal S}$-case and we take advantage of it also in the proof of (ii). 

(i) Define ${\mathcal T}^\diam:=\{\gtp\in\Specd(M):\ \ker(\mu^\diam)\subset\gtp\}$, which is a closed subset of $\Specd(M)$. 

We prove first: ${\mathcal T}^\diam\subset\Cc_{\Specd(\pi)}$. Suppose there exists $\gtp\in{\mathcal T}^\diam\setminus\Cc_{\Specd(\pi)}$. Then there exists $\gtp_1\in\Specd(M)\setminus\{\gtp\}$ such that $\Specd(\pi)(\gtp)=\Specd(\pi)(\gtp_1)$. As $\Specd(\pi)$ is a separated map, we achieve a contradiction if we show: $\gtp\subset\gtp_1$.

Pick $f\in\gtp$. If we prove $f^2\in\gtp_1$, then $f\in\gtp_1$, so we assume $f$ is non-negative. By Remark \ref{module}(ii) we have $\mu^\diam(\mu^\diam(f)\circ\pi)=\mu^\diam(f)$. Thus, $f-(\mu^\diam(f)\circ\pi)\in\ker(\mu^\diam)\subset\gtp$, so 
$$
\mu^\diam(f)\circ\pi=f-(f-(\mu^\diam(f)\circ\pi))\in\gtp.
$$ 
Consequently, $\mu^\diam(f)\in\Specd(\pi)(\gtp)=\Specd(\pi)(\gtp_1)$ and $\mu^\diam(f)\circ\pi\in\gtp_1$. In addition, for each $x\in M$ we have $0\leq f(x)\leq d\cdot(\mu^\diam(f)\circ\pi)(x)$ and $d\cdot(\mu^\diam(f)\circ\pi)\in\gtp_1$. By Lemma \ref{suma}(i) we conclude $f\in\gtp_1$.

Next, we prove the converse inclusion: $\Cc_{\Specd(\pi)}\subset{\mathcal T}^\diam$. Pick $\gtp\in\Cc_{\Specd(\pi)}$ and let $\gtp_0\in\Cc_{\Specd(\pi)}$ be a minimal element of $\Cc_{\Specd(\pi)}$ contained in $\gtp$. It is enough to show: $\ker(\mu^\diam)\subset\gtp_0$. As we have announced above, we make a reduction from the ${\mathcal S}^*$-case to the ${\mathcal S}$-case (see claims \ref{mec} and \ref{mec2} below). Once this is done, we prove the statement in paragraph \ref{mec3}.

Assume $\gtp\in\Cc_{\Speca(\pi)}$. Let $\gtm^*\in\beta^*(M)$ be the unique maximal ideal of ${\mathcal S}^*(M)$ that contains $\gtp$ and let $\gtm\in \beta(M)$ be the unique maximal ideal of ${\mathcal S}(M)$ such that $\gtm\cap{\mathcal S}^*(M)\subset\gtm^*$.

\paragraph{}\label{mec} We claim: $\gtp_0\subset\gtm\cap{\mathcal S}^*(M)$.

Suppose $\gtm\cap{\mathcal S}^*(M)\subsetneq\gtp_0$, so $\gtm\cap{\mathcal S}^*(M)\not\in\Cc_{\Speca(\pi)}$. Thus, there exists a prime ideal $\gtq'\in\Speca(M)$ such that $\gtq'\neq\gtm\cap{\mathcal S}^*(M)$ and $\Speca(\pi)(\gtq')=\Speca(\pi)(\gtm\cap{\mathcal S}^*(M))$. As $\Speca(\pi)$ is a separated map, $\gtq'\not\subset\gtm\cap{\mathcal S}^*(M)$ and $\gtm\cap{\mathcal S}^*(M)\not\subset\gtq'$. 

Let $\gtm_1^*\in\beta^*(M)$ be the unique maximal ideal of ${\mathcal S}^*(M)$ that contains $\gtq'$ and let $\gtm_1\in\beta(M)$ be the unique maximal ideal of ${\mathcal S}(M)$ such that $\gtm_1\cap{\mathcal S}^*(M)\subset\gtm_1^*$. Let us show: $\gtq'=\gtm_1\cap \mathcal{S}^*(M)$.

Recall that either $\gtm_1\cap{\mathcal S}^*(M)\subset\gtq'$ or $\gtq'\subset\gtm_1\cap{\mathcal S}^*(M)$. If $\gtm_1\cap{\mathcal S}^*(M)\subset\gtq'$, then
\begin{multline*}
\Specs(\pi)(\gtm_1)\cap{\mathcal S}^*(N)=\Speca(\pi)(\gtm_1\cap{\mathcal S}^*(M))\\
\subset\Speca(\pi)(\gtq')=\Speca(\pi)(\gtm\cap{\mathcal S}^*(M))=\Specs(\pi)(\gtm)\cap{\mathcal S}^*(N).
\end{multline*}
As $\Specs(\pi)$ is a closed map, $\gtn_1:=\Specs(\pi)(\gtm_1)$ and $\gtn:=\Specs(\pi)(\gtm)$ are maximal ideals of ${\mathcal S}(N)$. Let $\gtn^*,\gtn^*_1\in\beta^*(M)$ be the unique maximal ideals of ${\mathcal S}^*(N)$ that contains $\gtn\cap{\mathcal S}^*(N)$ and $\gtn_1\cap{\mathcal S}^*(N)$. Thus, $\gtn_1\cap{\mathcal S}^*(N)\subset\gtn\cap{\mathcal S}^*(N)\subset\gtn^*$. We conclude $\gtn_1^*=\gtn^*$, so $\gtn_1=\gtn$ and $\Speca(\pi)(\gtm_1\cap{\mathcal S}^*(M))=\Speca(\pi)(\gtq')$. As $\Speca(\pi)$ is separated, $\gtm_1\cap{\mathcal S}^*(M)=\gtq'$.

Otherwise, $\gtq'\subset\gtm_1\cap{\mathcal S}^*(M)$. Then, there exists a prime ideal $\gtq:={\mathcal W}_M^{-1}\gtq'\subset\gtm_1$ of ${\mathcal S}(M)$ such that $\gtq'=\gtq\cap{\mathcal S}^*(M)$ and
\begin{align*}
&\Specs(\pi)(\gtq)\cap{\mathcal S}^*(N)=\Speca(\pi)(\gtq')=\Speca(\pi)(\gtm\cap{\mathcal S}^*(M))=\Specs(\pi)(\gtm)\cap{\mathcal S}^*(N),\\
&\Specs(\pi)(\gtq)\cap{\mathcal S}^*(N)=\Speca(\pi)(\gtq')\subset\Speca(\pi)(\gtm_1\cap{\mathcal S}^*(M))=\Specs(\pi)(\gtm_1)\cap{\mathcal S}^*(N).
\end{align*}
Consequently,
$$
\Specs(\pi)(\gtq)\cap{\mathcal S}^*(N)=\Specs(\pi)(\gtm)\cap{\mathcal S}^*(N)\subset\Specs(\pi)(\gtm_1)\cap{\mathcal S}^*(N). 
$$
As in the previous case, $\Specs(\pi)(\gtm)=\Specs(\pi)(\gtm_1)$ and we deduce $\gtq'=\gtm_1\cap{\mathcal S}^*(M)$. 

Next, as $\gtq'\not\subset\gtm\cap{\mathcal S}^*(M)$ and $\gtm\cap{\mathcal S}^*(M)\not\subset\gtq'$, we have $\gtm_1\neq\gtm$, so $\gtm_1^*\neq\gtm^*$. In addition,
$$
\Speca(\pi)(\gtm_1\cap{\mathcal S}^*(M))=\Speca(\pi)(\gtm\cap{\mathcal S}^*(M))\subset\Speca(\pi)(\gtm_1^*)\cap\Speca(\pi)(\gtm^*).
$$
As $\Speca(\pi)$ is a closed map and ${\mathcal S}^*(N)$ is Gelfand, we conclude $\Speca(\pi)(\gtm_1^*)=\Speca(\pi)(\gtm^*)$, so $\gtm^*\not\in\Cc_{\Speca(\pi)}$. By Lemma \ref{disting} and since $\gtm^*\in\cl_{\Speca(M)}(\gtp_0)$,
$$
\#(\Speca(\pi)^{-1}(\Speca(\pi)(\gtp_0)))\geq\#(\Speca(\pi)^{-1}(\Speca(\pi)(\gtm^*)))\geq2, 
$$
which is a contradiction because $\gtp_0\in\Cc_{\Speca(\pi)}$.

\paragraph{}\label{mec2}
As $\gtp_0\subset\gtm\cap{\mathcal S}^*(M)$, there exists a unique prime ideal $\gtq_0:={\mathcal W}_M^{-1}\gtp_0\subset\gtm$ such that $\gtp_0=\gtq_0\cap{\mathcal S}^*(M)$. We claim: \em $\gtq_0\in\Cc_{\Specs(\pi)}$\em.

Pick $\gtq_1\in\Specs(M)$ such that $\Specs(\pi)(\gtq_0)=\Specs(\pi)(\gtq_1)$. Thus,
\begin{multline*}
\Speca(\pi)(\gtq_0\cap{\mathcal S}^*(M))=\Specs(\pi)(\gtq_0)\cap{\mathcal S}^*(N)\\
=\Specs(\pi)(\gtq_1)\cap{\mathcal S}^*(N)=\Speca(\pi)(\gtq_1\cap{\mathcal S}^*(M)).
\end{multline*}
As $\gtp_0\in\Cc_{\Speca(\pi)}$, it follows $\gtq_1\cap{\mathcal S}^*(M)=\gtq_0\cap{\mathcal S}^*(M)=\gtp_0$, so $\gtq_0=\gtq_1$, as claimed.

\paragraph{}\label{mec3}
By \ref{mec} and \ref{mec2} it is enough to consider the ${\mathcal S}$-case and prove: $\gtp_0\in{\mathcal T}$. Pick $f\in\ker(\mu)$ and let us show: $f\in\gtp_0$. 

Consider the non-negative functions $h_1:=|f|-f$ and $h_2:=|f|+f$. As $h_1h_2=0\in\gtp_0$, we may assume $h_1\in\gtp_0$. As $\pi$ is open, closed and surjective,
$$
g_1:N\to\R,\ y\mapsto\sup\{h_1(x):\ x\in\pi^{-1}(y)\}
$$
is by \cite[Const.3.1]{fg3} a semialgebraic function. By \cite[Eq.$(\ast)$ in Proof Thm.1.5]{fg3} it holds ${\Specs(\pi)}(\Dd(h_1))=\Dd(g_1)$. As $\gtp_0\notin\Dd(h_1)$ and $\{\gtp_0\}={\Specs(\pi)}^{-1}({\Specs(\pi)}(\gtp_0))$, we deduce 
$$
{\Specs(\pi)}(\gtp_0)\notin{\Specs(\pi)}(\Dd(h_1))=\Dd(g_1),
$$ 
so $g_1\circ\pi\in\gtp_0$. By Corollary \ref{minz} $\gtp_0$ is a $z$-ideal, so to prove that $f\in\gtp_0$ it is enough to show: $Z(g_1\circ\pi)\subset Z(f)$. 

Suppose there exists a point $x\in Z(g_1\circ\pi)$ such that $f(x)\neq0$. As $g_1(\pi(x))=0$, the semialgebraic function $h_1$ vanishes identically on the fiber $\pi^{-1}(\pi(x))$ (recall that $h_1\geq0$ on $M$). Thus, $f(z)\geq0$ for each $z\in\pi^{-1}(\pi(x))$ and $f(x)>0$. Hence,
$$
\mu(f)(\pi(x))=\tfrac{1}{d}\sum_{z\in \pi^{-1}(\pi(x))}b_{\pi}(z)f(z)>0,
$$
which is a contradiction because $f\in\ker(\mu)$.

\paragraph{} For the sake of completeness, we prove: \em The prime ideal $\gtq_0\in\Cc_{\Specs(\pi)}$ introduced in \em \ref{mec2} \em is minimal in $\Cc_{\Specs(\pi)}$\em. 

Suppose there exists $\gtq_1\subset\gtq_0$ such that $\gtq_1\in\Cc_{\Specs(\pi)}$. By (i) we have $\ker(\mu)\subset\gtq_1$. Therefore, $\ker(\mu^*)=\ker(\mu)\cap \mathcal{S}^*(M)\subset \gtq_1\cap\mathcal{S}^*(M)$, so $\gtq_1\cap\mathcal{S}^*(M)\in \Cc_{\Speca(\pi)}$. As $\gtq_1\cap\mathcal{S}^*(M)\subset\gtq_0\cap\mathcal{S}^*(M)$ and $\gtq_0\cap\mathcal{S}^*(M)\in \Cc_{\Specs(\pi)}$ is minimal, it follows $\gtq_1\cap\mathcal{S}^*(M)= \gtq_0\cap\mathcal{S}^*(M)$, so $\gtq_1=\gtq_0$.

(ii) We have to prove: $\Cc_{\Specd(\pi)}=\cl_{\Specd(M)}(\Cc_\pi)$. Recall that for each $x_1\in M$ we have $\Specd(\pi)^{-1}(\Specd(\pi)(\gtm_{x_1}))=\{\gtm_{x_1},\ldots,\gtm_{x_r}\}$ where $\pi^{-1}(\pi(x_1))=\{x_1,\ldots,x_r\}$. Thus, $\Cc_\pi=\Cc_{\Specd(\pi)}\cap M$, so $\cl_{\Specd(M)}(\Cc_\pi)\subset\Cc_{\Specd(\pi)}$. Let us prove next the converse inclusion. Pick $\gtp\in\Cc_{\Specd(\pi)}$ and let $\gtp_0\in\Cc_{\Specd(\pi)}$ be a minimal element of $\Cc_{\Specd(\pi)}$ contained in $\gtp$. If we prove that $\gtp_0\in\cl_{\Specd(M)}(\Cc_\pi)$, then $\gtp\in\cl_{\Specd(M)}(\{\gtp_0\})\subset\cl_{\Specd(M)}(\Cc_\pi)$. By \ref{mec} and \ref{mec2} it is enough to consider the ${\mathcal S}$-case. By Corollary \ref{minz} the prime ideal $\gtp_0$ is a $z$-ideal and by \cite[Lem.4.1]{fg2} $\gtP_0:=\Specs(\pi)(\gtp_0)$ is also a $z$-ideal. 

Let $f\in\gtP_0$ be such that ${\tt d}:=\dim(Z(f))=\min\{\dim(Z(g)):\ g\in\gtP_0\}$ and denote $Z:=Z(f)$. As $\gtP_0$ is a $z$-ideal, $\gtP_0\in\cl_{\Specs(N)}(Z)$ (use \cite[Lem.4.3]{fg2}). Write $T:=\pi^{-1}(Z)$ and consider the restriction map $\pi|_T:T\to Z$. By Hardt's trivialization theorem \cite[9.3.2]{bcr} there exist: 
\begin{itemize}
\item a semialgebraic partition $\{A_1,\dots,A_r\}$ of $Z$, 
\item semialgebraic sets $P_1,\dots,P_r\subset\R^p$ and 
\item semialgebraic homeomorphisms $\theta_{\ell}:A_{\ell}\times P_{\ell}\to \pi^{-1}(A_{\ell})$
\end{itemize}
such that for $1\leq\ell\leq r$ we have the following commutative diagram
$$
\xymatrix{
A_{\ell}\times P_{\ell}\ar@{->}[r]^{\quad\theta_{\ell}\quad}\ar@{->}[rd]^{\pi_{\ell}\quad}&\pi^{-1}(A_{\ell})\ar@{->}[d]^{\pi|_{\pi^{-1}(A_\ell)}\quad}\\
&A_{\ell}}
$$
where $\pi_{\ell}:A_{\ell}\times P_{\ell}\to A_{\ell}$ is the projection onto $A_{\ell}$. Taking a semialgebraic triangulation of $Z$ compatible with $A_1,\ldots,A_r$ we may assume that each $A_i$ is locally compact. As $\pi$ has finite fibers, each $P_\ell$ is a finite set.

As $\cl_{\Specs(N)}(Z)=\bigcup_{\ell=1}^r\cl_{\Specs(N)}(A_\ell)$, we may assume $\gtP_0\in\cl_{\Specs(N)}(A_1)$. By \cite[Lem.4.3]{fg2} it follows that for each $g\in\mathcal{S}(N)$ such that $Z(g)=\cl_N(A_1)$ we have $g\in\gtP_0$, so $\dim(A_1)=\dim(\cl_N(A_1))\geq {\tt d}=\dim(Z)\geq\dim(A_1)$, that is, $\dim(A_1)={\tt d}$. As $A_1$ is locally compact, the semialgebraic set $C:=\cl_N(A_1)\setminus A_1$ is closed in $N$.

By Lemma \ref{homeo:fe_FG} there exists $h\in {\mathcal S}(N)$ with $Z(h)=C$ such that the inclusion ${\tt j}:A_1\to N$ induces a homeomorphism
$$
\Specs({\tt j})|:\Specs(A_1)\setminus \Zz(h|_{A_1})\to \cl_{\Specs(N)}(A_1)\setminus \Zz(h).
$$
As $\gtP_0$ is a $z$-ideal and $\dim(Z(h))=\dim(C)<{\tt d}$, we have $\gtP_0\not\in\Zz(h)$. In particular, there exists $\gtP_0'\in\Specs(A_1)\setminus \Zz(h|_{A_1})$ such that $\Specs({\tt j})(\gtP_0')=\gtP_0$.

Next, consider the ${\tt d}$-dimensional subset $\pi^{-1}(A_1)$ of $M$ (recall that $\pi$ has finite fibers). As $\pi:M\to N$ is closed and has finite fibers, it is a proper map, so $\pi^{-1}(A_1)$ is locally compact. Thus, $C':=\cl_M(\pi^{-1}(A_1))\setminus \pi^{-1}(A_1)$ is a closed subset of $M$. By Lemma \ref{opcl} we have $C'=\pi^{-1}(C)$ and $\cl_{\Specs(M)}(C')=\Specs(\pi)^{-1}(\cl_{\Specs(N)}(C))$. As $\gtP_0\not\in\Zz(h)$, we deduce $\gtP_0\not\in\cl_{\Specs(N)}(C)$ and $\gtp_0\notin \cl_{\Specs(M)}(C')$.

As before, there exists $h'\in {\mathcal S}(M)$ with $Z(h')=C'$ such that the inclusion ${\tt i}:\pi^{-1}(A_1)\to M$ induces a homeomorphism
$$
\Specs({\tt i})|:\Specs(\pi^{-1}(A_1))\setminus \Zz(h'|_{\pi^{-1}(A_1)})\to \cl_{\Specs(N)}(\pi^{-1}(A_1))\setminus \Zz(h').
$$
As $\gtp_0$ is $z$-ideal and $\gtp_0\notin \cl_{\Specs(M)}(C')$, it follows from \cite[Lem.4.3]{fg2} that $\gtp_0\notin \Zz(h')$. Thus, there exists $\gtp_0'\in\Specs(\pi^{-1}(A_1))\setminus \Zz(h'|_{\pi^{-1}(A_1)})$ such that $\Specs({\tt i})(\gtp_0')=\gtp_0$.

Suppose that $\gtp_0\not\in\cl_{\Specs(M)}(\Cc_\pi)$. Observe that $A_1\cap\pi(\Cc_\pi)\neq\varnothing$ if and only if $\#(P_1)=1$. If such is the case, then $A_1\subset\pi(\Cc_\pi)$. Thus,
\begin{multline*}
\gtP_0=\Specs(\pi)(\gtp_0)\in\cl_{\Specs(N)}(A_1)\subset\cl_{\Specs(N)}(\pi(\Cc_\pi))\\
=\cl_{\Specs(N)}(\Specs(\pi)(\Cc_\pi))=\Specs(\pi)(\cl_{\Specs(M)}(\Cc_\pi)).
\end{multline*}
As $\{\gtp_0\}=\Specs(\pi)^{-1}(\gtP_0)$, we deduce $\gtp_0\in\cl_{\Specs(M)}(\Cc_\pi)$, against our assumption. Consequently, $\#(P_1)\geq2$ and $A_1\cap\pi(\Cc_\pi)=\varnothing$. 

Consider the commutative diagram
$$
\xymatrix{
\Specs(A_1)\times P_1\ar@{->}[rr]^{\quad\Specs(\theta_1)\quad}\ar@{->}[rrd]_{\Specs(\pi_1)\quad}&&\Specs(\pi^{-1}(A_1))\ar@{->}[d]^{\Specs(\pi|_{\pi^{-1}(A_1)})\quad} \ar@{->}[rr]^{\quad\Specs({\tt i})\quad}&&\Specs(M)\ar@{->}[d]^{\Specs(\pi)}\\
&&\Specs(A_1) \ar@{->}[rr]^{\quad\Specs({\tt j})\quad}&& \Specs(N)
}
$$
where $\Specs(\theta_1)$ is a homeomorphism. Thus, there exists $\gtp'_1\in\Specs(\pi^{-1}(A_1))$ such that $\gtp'_1\neq \gtp'_0$ and $\Specs(\pi|_{\pi^{-1}(A_1)})(\gtp_0')=\Specs(\pi|_{\pi^{-1}(A_1)})(\gtp_1')=\gtP_0'$. Observe that $\gtp'_1\notin \Zz(h'|_{\pi^{-1}(A_1)})$. Define 
$$
\gtp_1:=\Specs({\tt i})(\gtp_1')\in\cl_{\Specs(M)}(\pi^{-1}(A_1))\setminus \Zz(h').
$$
As $\gtp_1'\neq\gtp_0'$ and $\Specs({\tt i})|$ is bijective, we deduce $\gtp_1\neq\gtp_0$. As $\Specs(\pi)(\gtp_1)=\gtP_0=\Specs(\pi)(\gtp_0)$ we have $\gtp_0\not\in\Cc_{\Specs(\pi)}$, which is a contradiction. Consequently, $\gtp_0\in\cl_{\Specs(M)}(\Cc_\pi)$, as required.
\end{proof}

\section{Proof of Theorem \ref{bc}}\label{s6}

In this section we prove Theorem \ref{bc}. 

\begin{proof}[Proof of Theorem \em \ref{bc}] 
The implication (ii) $\Longrightarrow$ (iii) follows from Proposition \ref{sep2}(iii) and (iv) and the density of $\betad(M)$ in $\Specs(M)$, whereas the implication (iii) $\Longrightarrow$ (i) follows from Proposition \ref{sep2}(v) and the density of $M$ in $\betad(M)$. The equalities ${\mathcal B}_{\betad(\pi)}=\cl_{\betad(M)}({\mathcal B}_\pi)$ and ${\mathcal R}_{\betad(\pi)}=\cl_{\betad(N)}({\mathcal R}_\pi)$ follow from the equalities ${\mathcal B}_{\Specd(\pi)}=\cl_{\Specd(M)}({\mathcal B}_\pi)$ and ${\mathcal R}_{\Specd(\pi)}=\cl_{\Specd(N)}({\mathcal R}_\pi)$ (once they are proved) together with Proposition \ref{sep2}(iv).

(i) $\Longrightarrow$ (ii). Let $N_1,\ldots,N_r$ be the connected components of $N$ and denote $M_i:=\pi^{-1}(N_i)$. By Lemma \ref{rcc} there exist integers $d_i\geq1$ such that $\pi|_{M_i}:M_i\to N_i$ is a $d_i$-branched covering. In addition, by \cite[Cor.4.7]{fg2} $\Specd(N_1),\ldots,\Specd(N_r)$ are the connected components of $\Specd(N)$. By \cite[Cor.4.6]{fg2} and Corollary \ref{rcc} it is enough to prove Theorem \ref{bc} for the $d_i$-branched coverings $\pi|_{M_i}:M_i\to N_i$. We may assume from the beginning that $N$ is connected.

 By Remark \ref{cuenta}(ii) $\pi:M\to N$ is a $d$-branched covering for some integer $d\geq1$. By Proposition \ref{sep2}(ii) $\Specd(\pi):\Specd(M)\to\Specd(N)$ is a finite quasi-covering. 

\paragraph{} Let us show now: \em the fibers of $\Specd(\pi)$ have no more than $d$ points\em. 

Otherwise, there exists $\gtq\in\Specd(N)$ such that $\#(\Specd(\pi)^{-1}(\gtq))>d$. As $\Specd(\pi)$ is separated, there exists by Lemma \ref{disting} an open neighborhood $\mathcal{V}$ of $\gtq$ in $\Specd(N)$ such that $\#(\Specd(\pi)^{-1}(\gtp))>d$ for each $\gtp\in\mathcal{V}$. As $N$ is dense in $\Specd(N)$, there exists $\gtn_y\in\mathcal{V}\cap N$. Write $\pi^{-1}(y):=\{x_1,\ldots,x_r\}$. By Proposition \ref{sep2}(v) $\Specd(\pi)^{-1}(\gtn_y)=\{\gtm_{x_1},\ldots,\gtm_{x_r}\}$, so $d<r$, which is a contradiction because $\pi$ is a $d$-branched covering. 

\paragraph{} We claim: $\Specd(N)\setminus{\mathcal R}_{\Specd(\pi)}=\{\gtq\in\Specd(N):\ \#(\Specd(\pi)^{-1}(\gtq))=d\}$. 

The inclusion right to left follows from Lemma \ref{disting} and Corollary \ref{max}. To prove the converse inclusion let $\gtq\in\Specd(N)\setminus{\mathcal R}_{\Specd(\pi)}$. By Lemma \ref{genbranch} there exists an open neighborhood $\mathcal{W}$ of $\gtq$ such that the cardinality of the fibers at the points of $\mathcal{W}$ is a constant $c$. As $N\setminus \mathcal{R}_\pi$ is dense in $\Specd(N)$, the intersection $\pi^{-1}(\mathcal{W})\cap (N\setminus \mathcal{R}_\pi)$ is non-empty, so $c=d$, as claimed. 

\paragraph{}\label{pa:regular} We prove next: \em $\gtm_x\in{\Specd(M)}_{\reg}$ for $x\in M_{\reg}$\em.

By the previous claim it is enough to check that $\#(\pi^{-1}(\pi(x)))=d$, which is true by Proposition \ref{sep2}(v) because $x\in M_{\reg}$.

\paragraph{}\label{unbr} Consequently, the restriction 
$$
\Specd(\pi)|_{\Specd(M)_{\reg}}:\Specd(M)_{\reg}\to\Specd(N)\setminus{\mathcal R}_{\Specd(\pi)}
$$
is a $d$-unbranched covering. As $M_{\reg}$ is dense in $M$, it follows from \ref{pa:regular} that
$$
\Specd(M)_{\reg}=\Specd(M)\setminus\Specd(\pi)^{-1}({\mathcal R}_{\Specd(\pi)})
$$ 
is dense in $\Specd(M)$.

\paragraph{}\label{neigh1} 
Let $\gtq\in\Specd(N)$ and write $\Specd(\pi)^{-1}(\gtq):=\{\gtp_1,\ldots,\gtp_r\}$. We claim: \em there exist $g\in{\mathcal S}^\diam(N)$ and $f_1,\ldots,f_r\in{\mathcal S}^\diam(M)$ such that $\gtq\in\Ddd(g)$, $\gtp_i\in\Ddd(f_i)$, $f_if_j=0$ if $i\neq j$,
$$
\Specd(\pi)^{-1}(\Ddd(g))=\bigsqcup_{i=1}^r\Ddd(f_i)\quad\text{and}\quad\Specd(\pi)(\Ddd(f_i))=\Ddd(g)
$$
for $i=1,\ldots,r$.
\em

For each pair $1\leq i<j\leq r$ there exist by Lemma \ref{sep} semialgebraic functions $f_{ij},f_{ji}\in{\mathcal S}^\diam(M)$ such that $\gtp_i\in \Ddd(f_{ij})$, $\gtp_j\in \Ddd(f_{ji})$ and $f_{ij}f_{ji}=0$. For each $i=1,\ldots,r$ define $h_i:=\prod_{k,\,k\neq i}f_{ik}\in{\mathcal S}^\diam(M)$. It holds $\gtp_i\in\Ddd(h_i)$ and $h_ih_j=0$ if $i\neq j$. Observe that $\Ddd(h_i)\cap\Ddd(h_j)=\varnothing$ if $i\neq j$. Define
$$
\mathcal{W}:=\Big(\Specd(N)\setminus\Specd(\pi)\Big(\Specd(M)\setminus\bigcup_{i=1}^r\Ddd(h_i)\Big)\Big)\cap \bigcap_{i=1}^r\Specd(\pi)\big(\Ddd(h_i)\big),
$$
which is an open neighborhood of $\gtq$ in $\Specd(N)$ such that $\Specd(\pi)^{-1}(\mathcal{W})\subset\bigcup_{i=1}^r\Ddd(h_i)$. Let $g\in{\mathcal S}^\diam(N)$ be such that $\gtq\in\Ddd(g)\subset \mathcal{W}$. As $\Specd(\pi)^{-1}(\Ddd(g))=\Ddd(g\circ\pi)$, it follows
$$
\Ddd(h_i)\cap\Specd(\pi)^{-1}(\Ddd(g))=\Ddd(h_i(g\circ\pi)).
$$
If we define $f_i:=h_i(g\circ\pi)$ for $i=1,\ldots,r$, the reader can check that the claim follows.

\paragraph{}\label{pa:branched} 
We check next: \em$\Specd(\pi):\Specd(M)\to\Specd(N)$ is a $d$-branched covering map\em.

By Remark \ref{intersection} it is enough to show that each $\gtq\in \mathcal{R}_\pi$ has a special neighborhood. Write $\Specd(\pi)^{-1}(\gtq):=\{\gtp_1,\ldots,\gtp_r\}$ where $r<d$. Let $g\in{\mathcal S}^\diam(N)$ and $f_1,\ldots,f_r\in{\mathcal S}^\diam(M)$ be as in \ref{neigh1} for $\gtq$ and $\gtp_1,\ldots,\gtp_r$. Let $E_1,\ldots,E_s$ be the connected components of $D(g)$. By Lemma \ref{ccD} the connected components ${\mathcal V}_i:=\cl_{\Specd(N)}(E_i)\cap\Ddd(g)$ of $\Ddd(g)$ are open subsets of $\Specd(N)$.

We may assume $\gtq\in{\mathcal V}:={\mathcal V}_1$. If ${\mathcal U}:=\Specd(\pi)^{-1}({\mathcal V})$ and ${\mathcal U}_i:={\mathcal U}\cap\Ddd(f_i)$, we have ${\mathcal U}=\bigsqcup_{i=1}^r{\mathcal U}_i$ and $\Specd(\pi)({\mathcal U}_i)={\mathcal V}$. Observe that $\gtp_i\in{\mathcal U}_i$ for $i=1,\ldots,r$. 

By \ref{unbr} and Lemma \ref{restr} the restriction
$$
\Specd(\pi)|_{{\mathcal U}\cap\Specd(M)_{\reg}}:{\mathcal U}\cap\Specd(M)_{\reg}\to{\mathcal V}\setminus{\mathcal R}_{\Specd(\pi)}
$$
is a $d$-unbranched covering. By Lemma \ref{opcl2} the restriction
$$
\Specd(\pi)|_{{\mathcal U}_1\cap\Specd(M)_{\reg}}:{\mathcal U}_1\cap\Specd(M)_{\reg}\to{\mathcal V}\setminus{\mathcal R}_{\Specd(\pi)}
$$
is a branched covering with empty ramification set. We prove below in \ref{bl1} that there exists an open dense subset ${\mathcal G}$ of $\mathcal{U}_1$ such that the cardinality of the fibers of $\Specd(\pi)|_{{\mathcal G}}:\mathcal{G}\to \Specd(\pi)({\mathcal G})$ is a constant $e\in \N$. We deduce by Lemma \ref{cuenta0} that ${\mathcal U}_1$ is an exceptional neighborhood of $\gtp_1$ with respect to ${\mathcal V}$. As this can be done with each ${\mathcal U}_i$, we conclude that ${\mathcal V}$ is a special neighborhood of $\gtq$ and $\Specd(\pi)$ is a $d$-branched covering. 

\paragraph{}\label{bl1} We claim: \em there exists an open dense subset ${\mathcal G}$ of ${\mathcal U}_1$ such that the cardinality of the fibers of the restriction map $\Specd(\pi)|_{\mathcal G}$ is a constant $e\in \N$.\em

Denote $E:=E_1$, $D:=\pi^{-1}(E)$ and $D_i:=D(f_i)\cap D$ for $i=1,\ldots,r$. By \ref{neigh1} and Proposition \ref{sep2}(v) we have $D=\bigsqcup_{i=1}^rD_i$ and $\pi(D_i)=E$ for $i=1,\ldots,r$. By Lemma \ref{restr} the restriction $\pi|_D:D\to E$ is a $d$-branched covering. By Remark \ref{cuenta}(ii) and Lemma \ref{opcl2} $\pi|_{D_1}:D_1\to E$ is an $e$-branched covering for some integer $e\geq1$. By \ref{unbr} applied to $\pi|_{D_1}$,
\begin{equation}\label{d1}
\Specd(\pi|_{D_1}):\Specd(D_1)_{\reg}\to\Specd(E)\setminus{\mathcal R}_{\pi|_{D_1}}
\end{equation}
is an $e$-unbranched covering.
As $E$ is dense in ${\mathcal V}$ and $\Specd(\pi)|_{\mathcal U}:{\mathcal U}\to{\mathcal V}$ is by Lemma \ref{opcl} open, closed and surjective, $D$ is dense in ${\mathcal U}$. As $D\cap{\mathcal U}_1=D_1$, we deduce that $D_1$ is dense in ${\mathcal U}_1
$. 

Let ${\tt i}: D_1\to M$ and ${\tt j}: E \to N$ be the inclusions. Consider the commutative diagrams
$$
{\small\xymatrix{
D_1\ar[d]_{\pi|_{D_1}}\ar@{^{(}->}[r]^(0.5){\tt i}& M\ar@<0ex>[d]^\pi&&\Specd(D_1)\ar[d]_{\Specd(\pi|_{D_1})}\ar[rr]^(0.5){\Specd({\tt i})}&&\Specd(M)\ar@<0ex>[d]^{\Specd(\pi)}\\
E\ar@{^{(}->}[r]^(0.5){\tt j}& N&&\Specd(E)\ar[rr]^(0.5){\Specd({\tt j})}&&\Specd(N)
}}
$$
By Lemma \ref{homeo:fe_FG} there exist $a\in{\mathcal S}^\diam(M)$ and $b\in{\mathcal S}^\diam(N)$ such that 
\begin{align*}
&\Specd({\tt i})|:\Specd(D_1)\setminus\Specd({\tt i})^{-1}(\Zzd(a))\to\cl_{\Specd(M)}(D_1)\setminus\Zzd(a),\\
&\Specd({\tt j})|:\Specd(E)\setminus\Specd({\tt j})^{-1}(\Zzd(b))\to\cl_{\Specd(N)}(E)\setminus\Zzd(b),
\end{align*}
are homeomorphisms,
\begin{itemize}
\item $\cl_M(D_1)\setminus Z(a)$ is dense in $\cl_M(D_1)$ and
\item $\cl_N(E)\setminus Z(b)$ is dense in $\cl_N(E)$.
\end{itemize}
In particular, $\cl_{\Specd(M)}(D_1)\setminus\Zzd(a)$ is dense in $\cl_{\Specd(M)}(D_1)$ and $\cl_{\Specd(N)}(E)\setminus\Zzd(b)$ is dense in $\cl_{\Specd(N)}(E)$. As $D_1$ is dense in ${\mathcal U}_1$ and $E$ is dense in $\mathcal{V}$, we deduce 
\begin{align*}
{\mathcal U}_1&\subset\cl_{\Specd(M)}({\mathcal U}_1)=\cl_{\Specd(M)}(D_1),\\
{\mathcal V}&\subset\cl_{\Specd(M)}({\mathcal V})=\cl_{\Specd(M)}(E).
\end{align*}
Define $\Zz_1:=\mathcal{U}_1\cap \Zz^\diam(a)$ and $\Zz_2:=\mathcal{V}\cap \Zz^\diam(b)$, which are closed nowhere dense subsets of $\mathcal{U}_1$ and $\mathcal{V}$. As $\Specd(\pi)|_{\mathcal{U}_1}:{\mathcal{U}_1}\to {\mathcal{V}}$ is a finite quasi-covering and $\Specd(M)_{\reg}\cap {\mathcal{U}_1}\subset \mathcal{U}_{1,\reg}$ is dense in $\mathcal{U}_1$, we have by Lemma \ref{nowhere} that $\Spec(\pi)(\Zz_1)$ is a closed nowhere dense subset of $\mathcal{V}$. Thus,
$$
{\mathcal G}:=\big(\mathcal{U}_1\cap \Specd(M)_{\reg}\big) \setminus \big(\Specd(\pi)^{-1}(\Specd(\pi)(\Zz_1)\cap \Zz_2)\big)
$$
is an open dense subset of $\mathcal{U}_1$. As the spectral map \eqref{d1} is an $e$-unbranched covering, we deduce (via the homeomorphisms $\Specd({\tt i})|$ and $\Specd({\tt j})|$) that the cardinality of the fibers of the restriction $\Specd(\pi)|_{{\mathcal G}}:{\mathcal G}\to \Specd(\pi)({\mathcal G})\subset \mathcal{V}\setminus{\mathcal R}_{\Specd(\pi)}$ is also $e$, as claimed.

To finish the proof of Theorem \ref{bc} it only remains to check the equalities $\mathcal{B}_{\Specd(\pi)}=\cl_{\Specd(M)}({\mathcal B}_\pi)$ and ${\mathcal R}_{\Specd(\pi)}=\cl_{\Specd(N)}({\mathcal R}_\pi).$

\paragraph{}\label{closureofB}
We prove first: $\mathcal{B}_{\Specd(\pi)}=\cl_{\Specd(M)}({\mathcal B}_\pi)$. 

The inclusion $\mathcal{B}_{\pi}\subset \mathcal{B}_{\Specd(\pi)}$ is clear. As $\mathcal{B}_{\Specd(\pi)}$ is a closed subset of $\Specd(M)$, it holds $\cl_{\Specd(M)}({\mathcal B}_\pi)\subset \mathcal{B}_{\Specd(\pi)}$. To prove the converse, pick $\gtp_1\in \mathcal{B}_{\Specd(\pi)}$ and let us show: $\gtp_1\in\cl_{\Specd(M)}({\mathcal B}_\pi)$. 

Denote $\gtq:=\Specd(\gtp_1)$ and $\Specd(\pi)^{-1}(\gtq)=\{\gtp_1,\ldots,\gtp_r\}$. As $\Specd(\pi):\Specd(M)\to\Specd(N)$ is a $d$-branched covering, there exist a special neighborhood $\mathcal{V}$ of $\gtq$ and corresponding exceptional neighborhoods $\mathcal{U}_1,\ldots,\mathcal{U}_r$ of $\gtp_1,\ldots,\gtp_r$. Let $h_i \in{\mathcal S}^\diam(M)$ be such that $\gtp_i\in\Dd^\diam(h_i)\subset \mathcal{U}_i$ for $i=1,\ldots,r$. Arguing as in the proof of \ref{neigh1}, we obtain functions $g\in{\mathcal S}^\diam(N)$ and $f_1,\ldots,f_r \in{\mathcal S}^\diam(M)$ such that $\Dd^\diam(g)$ is a special neighborhood of $\gtq$ and $\gtp_i\in \Dd^\diam(f_i)$ for $i=1,\ldots,r$ are exceptional neighborhoods with respect to $\Dd^\diam(g)$ (see Remark \ref{cuenta}(iii)). In particular, $\Specd(\pi)|_{\Dd^\diam(f_1)}:\Dd^\diam(f_1)\to\Dd^\diam(g)$ is an $e$-branched covering whose collapsing set contains $\gtp_1$. Note that $b_{\Specd(\pi)}(\gtp_1)=e$ and as $\gtp_1\in{\mathcal B}_{\Specd(\pi)}$, we deduce $e>1$. We also point out: $\cl_{\Specd(M)}(\Dd^\diam(f_1))\cap \Dd^\diam(f_j)=\varnothing$ for each $j\neq 1$ and by Lemma \ref{neigh} $\cl_{\Specd(M)}(\Dd^\diam(f_1))=\cl_{\Specd(M)}(D(f_1))$.

Next, consider the open subset $D(g)$ of $N$ and note that $\pi^{-1}(D(g))=\bigsqcup_{i=1}^r D(f_i)$. Thus, by Lemmas \ref{restr} and \ref{opcl2} $\pi|_{D(f_1)}: D(f_1)\to D(g)$ is a branched covering. By Proposition \ref{sep2} it is an $e$-branched covering.

Let ${\tt i}:D(f_1)\hookrightarrow M$ and ${\tt j}:D(g)\hookrightarrow N$ be the inclusions. Denote $Z:=\cl_M(D(f_1))\setminus D(f_1)$ and note that as $\cl_M(D(f_1))\cap D(f_j)=\varnothing$ for $j\neq 1$, we have by Lemma \ref{opcl} that $\pi(Z)=\cl_N(D(g))\setminus D(g)$. By Lemma \ref{homeo:fe_FG}
$$
\Specd({\tt i})|:\Specd(D(f_1))\setminus\Specd({\tt i})^{-1}(\Zz)\to\cl_{\Specd(M)}(D(f_1))\setminus\Zz
$$
is a homeomorphism, where $\Zz:=\cl_{\Specd(M)}(Z)$. Similarly, 
$$
\Specd({\tt j})|:\Specd(D(g))\setminus\Specd({\tt j})^{-1}(\Zz')\to\cl_{\Specd(N)}(D(g))\setminus\Zz'
$$
is a homeomorphism, where $\Zz':=\cl_{\Specd(N)}(\pi(Z))=\Specd(\pi)(\Zz)$.

We claim: \em $\gtp_1\in \cl_{\Specd(M)}(D(f_1))\setminus \Zz$ and in particular $\gtq=\Specd(\gtp_1)\in \cl_{\Specd(N)}(D(g))\setminus \Zz'$.\em

Suppose $\gtp_1\in \Zz$. As $\gtp_1\in \Dd^\diam(f_1)$, we deduce $\Dd^\diam(f_1)\cap Z\neq \varnothing$, so there exists $x\in \cl_M(D(f_1))\setminus D(f_1)$ such that $\gtm^\diam_x\in \Dd^\diam(f_1)$, which is a contradiction because $\Dd^\diam(f_1)\cap M=D(f_1)$.

We have the following commutative diagrams
$$
\xymatrix{
D(f_1)\ar@{^{(}->}[r]^{{\tt i}}\ar[d]_{\pi|_{D(f_1)}}&M\ar[d]^\pi& \Specd(D(f_1))\setminus \Specd({\tt i})^{-1}(\Zz)\ar[rr]^(0.55){\Specd({\tt i})|}\ar[d]_{\Specd(\pi|_{D(f_1)})}&& \cl_{\Specd(M)}(D(f_1))\setminus \Zz\ar@<-5ex>[d]_{\Specd(\pi)|}\\
D(g)\ar@{^{(}->}[r]^{{\tt j}}&N&\Specd(D(g))\setminus \Specd({\tt j})^{-1}(\Zz')\ar[rr]^(0.55){\Specd({\tt j})|}&&\cl_{\Specd(N)}(D(g))\setminus \Zz'
}
$$
where the maps $\Specd({\tt i})|$ and $\Specd({\tt j})|$ are (as proved above) homeomorphisms. As
$$
\Specd(\pi)^{-1}(\gtq)\cap \cl_{\Specd(M)}(D(f_1))=\{\gtp_1\}
$$
we deduce from Theorem \ref{colapse} that $\Specd({\tt i})^{-1}(\gtp_1)\in \mathcal{C}_{\Specd(\pi|_{D(f_1)})}=\cl_{\Specd(D(f_1))}(\mathcal{C}_{\pi|_{D(f_1)}})$. By Lemma \ref{colapseinB} we conclude $\gtp_1\in \cl_{\Specd(M)}(\mathcal{C}_{\pi|_{D(f_1)}})\subset \cl_{\Specd(M)}(\mathcal{B}_{\pi})$.

\paragraph{} By Lemma \ref{opcl}
\begin{multline*}
{\mathcal R}_{\Specd(\pi)}=\Specd(\pi)({\mathcal B}_{\Specd(\pi)})=\Specd(\pi)(\cl_{\Specd(M)}({\mathcal B}_\pi))\\
=\cl_{\Specd(N)}(\Specd(\pi)({\mathcal B}_\pi))=\cl_{\Specd(N)}(\pi({\mathcal B}_\pi))=\cl_{\Specd(N)}({\mathcal R}_\pi)
\end{multline*}
and in addition
\begin{multline*}
\Specd(\pi)^{-1}({\mathcal R}_{\Specd(\pi)})=\Specd(\pi)^{-1}(\cl_{\Specd(N)}({\mathcal R}_\pi))\\
=\cl_{\Specd(N)}(\Specd(\pi)^{-1}({\mathcal R}_\pi))=\cl_{\Specd(M)}(\pi^{-1}({\mathcal R}_\pi)).
\end{multline*}
This means that $\Specd(M)_{\reg}=\Specd(M)\setminus\cl_{\Specd(M)}(\pi^{-1}({\mathcal R}_\pi))$, as required.
\end{proof}

\begin{remarks}[Ramification index of the spectral map]\label{ri}
Let $\pi:M\to N$ be a semialgebraic $d$-bran\-ched covering and let $\Specd(\pi):\Specd(M)\to\Specd(N)$ be the associated spectral map, which is by Theorem \ref{bc} a $d$-branched covering. 

(i) Fix an integer $e\geq2$ and let us check: $\{b_{\Specd(\pi)}\geq e\}=\cl_{\Specd(M)}(\{b_\pi\geq e\})$. The latter shows the neat behavior of $b_{\Specd(\pi)}$ with respect to $b_\pi$, because
\begin{multline*}
\{b_{\Specd(\pi)}=e\}=\{b_{\Specd(\pi)}\geq e\}\setminus\{b_{\Specd(\pi)}\geq e+1\}\\
=\cl_{\Specd(M)}(\{b_\pi\geq e\})\setminus\cl_{\Specd(M)}(\{b_\pi\geq e+1\}).
\end{multline*}

Let $\gtp\in\cl_{\Specd(M)}(\{b_\pi\geq e\})$ and let ${\mathcal U}$ be an exceptional neighborhood of $\gtp$. Then $\Specd(\pi)|_{\mathcal U}:{\mathcal U}\to{\mathcal V}:=\Specd(\pi)({\mathcal U})$ is an $(b_{\Specd(\pi)}(\gtp))$-branched covering. In particular, there exists $x\in \{b_\pi\geq e\}$ such that $\gtm^\diam_x\in \mathcal{U}$. As $\mathcal{U}\cap M$ is an open neighborhood of $x\in M$, there exists by Remark \ref{cuenta}(iii) an exceptional neighborhood $U$ of $x$ such that $U\subset \mathcal{U}\cap M$. By Proposition \ref{sep2} and Remark \ref{cuenta}(iv) we deduce
\begin{multline*}
e\leq b_\pi(x)=\max\{\#(\pi^{-1}(y)\cap U):y\in \pi(U)\}\\
\leq\max\{\#(\Specd(\pi)^{-1}(\gtq)\cap \mathcal{U}):\gtq\in \Specd(\pi)(\mathcal{U})\}=b_{\Specd(\pi)}(\gtp).
\end{multline*}
To show the converse inclusion, note that in \ref{closureofB} inside the proof of Theorem \ref{bc} we proved $\{b_{\Specd(\pi)}= e\}\subset \cl_{\Specd(M)}(\{b_\pi= e\})$, so $\{b_{\Specd(\pi)}\geq e\}\subset\cl_{\Specd(M)}(\{b_\pi\geq e\})$.

(ii) For each $x\in M$ we have $b_{\pi}(x)=b_{\Specd(\pi)}(\gtm^\diam_x)$.

Indeed, if we denote $e:=b_{\pi}(x)$, then $\gtm^\diam_x\in \cl_{\Specd(M)}(\{b_\pi\geq e\})$. As $\{b_\pi\geq e+1\}$ is by Remark \ref{cuenta}(iv) a closed subset of $M$, we have $\cl_{\Specd(M)}(\{b_\pi\geq e+1\})\cap M=\{b_\pi\geq e+1\}$, so $\gtm_x\in\cl_{\Specd(M)}(\{b_\pi\geq e\})\setminus\cl_{\Specd(M)}(\{b_\pi\geq e+1\})=\{b_{\Specd(\pi)}=e\}$, as required.\qed
\end{remarks}

\appendix
\section{Bezoutian covering}\label{A}

Let ${\mathcal S}_n$ denote the symmetric group in $n$ symbols. For each $\gamma\in{\mathcal S}_n$ consider the semialgebraic homeomorphism
$$
{\widehat\gamma}:\R^n\to\R^n,\ x:=(x_1,\dots,x_n)\mapsto(x_{\gamma(1)}\dots,x_{\gamma(n)}),
$$
and define the following equivalence relation $E$ in $\R^n$:
$$
E:=\bigcup_{\gamma\in{\mathcal S}_n}\{(x,z)\in\R^n\times\R^n:\ z={\widehat\gamma}(x)\},
$$
which is a closed semialgebraic subset of $\R^n\times\R^n$. In addition, $\pi_1:E\to\R^n,\ (x,z)\mapsto x$ is a proper map because $\pi_1^{-1}([-r,r]^n)\subset[-r,r]^n\times[-r,r]^n$ for each real number $r>0$.

According to \cite[Thm. 1.4]{br} there exist a semialgebraic set $N$, a surjective semialgebraic map $f:\R^n\to N$ and a homeomorphism $g:\R^n/E\to N$ such that $f=g\circ\pi$, where $\pi:\R^n\to\R^n/E$ is the natural projection. We claim: \em the semialgebraic set $N$ and the maps $f$ and $g$ admit a very precise description\em. 

Let $\sigma_k\in\Z[{\tt x}_1,\dots,{\tt x}_n]$ be the elementary symmetric forms for $1\leq k\leq n$ and consider the polynomial map $\sigma:\R^n\to\R^n,\ x\mapsto(\sigma_1(x),\dots,\sigma_n(x))$. Then $N:=\sigma(\R^n)$ is a semialgebraic set, the semialgebraic map $(f:=)\sigma:\R^n\to N$ is surjective and \em $(g:=)\ol{\sigma}:\R^n/E\to N,\ [x]\mapsto\sigma(x)$ is a well-defined bijection\em. 

If $[x]=[z]$, there exists $\gamma\in{\mathcal S}_n$ such that $z={\widehat\gamma}(x)$, so $\sigma(x)=\sigma(z)$ because each component $\sigma_k$ of $\sigma$ is a symmetric polynomial. To prove that $\ol{\sigma}$ is injective pick $x,z\in\R^n$ such that $\sigma(x)=\sigma(z)$. We have 
$$
\prod_{i=1}^n(\t-x_i)=\t^n+\sum_{k=1}^{n}(-1)^k\sigma_k(x)\t^{n-k}=\t^n+\sum_{k=1}^{n}(-1)^k\sigma_k(z)\t^{n-k}=\prod_{i=1}^n(\t-z_i).
$$ 
Thus, there exists $\gamma\in{\mathcal S}_n$ such that $z={\widehat\gamma}(x)$, so $[x]=[z]$. 

Then, $\sigma^{-1}(\sigma(z))=\{\gamma(z):\ \gamma\in{\mathcal S}_n\}$ for each $z\in\R^n$. We have the commutative diagram:
$$
\xymatrix{
\R^n\ar[r]^(0.4)\pi\ar[rd]_\sigma&\R^n/E\ar[d]^{\ol{\sigma}}\\
&N
}
$$
Note that $\ol{\sigma}$ is continuous and let us see: \em $\ol{\sigma}$ is a homeomorphism\em. 

For each $u:=(u_1,\ldots,u_n)\in\R^n$ consider the polynomial $f_u({\tt t}):={\tt t}^n+\sum_{k=1}^{n}(-1)^ku_k{\tt t}^{k}$. Denote $\zeta_1(u),\dots,\zeta_n(u)$ the real parts of the (complex) roots of the polynomial $f_u$. Each value $\zeta_i(u)$ is repeated according to the multiplicity of the corresponding root. We index such values in such a way that $\zeta_1(u)\leq\cdots\leq\zeta_n(u)$. By \cite[\S 13.3]{gj} the functions $\zeta_1,\ldots,\zeta_n:\R^n\to\R$ are continuous. As $N$ is exactly the set of points $a\in\R^n$ such that $f_a$ has $n$ real roots, the map 
\begin{equation}\label{sectt}
s:N\to\R^n,\ a\mapsto(\zeta_1(a),\dots,\zeta_n(a))
\end{equation}
is a continuous section of $\sigma$. In particular, $\ol{\sigma}^{-1}=\pi\circ s$ is continuous, so $\ol{\sigma}$ is a homeomorphism.

We prove next: \em $\zeta_1,\dots,\zeta_n$ have semialgebraic graph\em, so $s:N\to\R^n$ is a semialgebraic map. 

Let ${\tt u}:=({\tt u}_1,\dots,{\tt u}_n)$ and ${\tt z}$ be variables, ${\tt i}:=\sqrt{-1}$. Consider the non-zero polynomial 
$$
{\tt P}({\tt u},{\tt z}):={\tt z}^n+\sum_{j=1}^n{\tt u}_j{\tt z}^{n-j}\in\Z[{\tt u},{\tt z}].
$$

If we write ${\tt z}:={\tt x}+{\tt i}{\tt y}$, we have
$$
{\tt P}({\tt u},{\tt z})=({\tt x}+{\tt i}{\tt y})^n+\sum_{j=1}^n{\tt u}_j({\tt x}+{\tt i}{\tt y})^{n-j}={\tt P}_1({\tt u},{\tt x},{\tt y})+{\tt i}{\tt P}_2({\tt u},{\tt x},{\tt y})
$$
for certain non-zero polynomials ${\tt P}_1,{\tt P}_2\in\Z[{\tt u},{\tt x},{\tt y}]$. Let $\zeta_j(u)+{\tt i}\eta_j(u)\in\C$ be the roots of $f_u$ for $u\in\R^n$ (where $1\leq j\leq n$). Then 
$$
{\tt P}_1(u,\zeta_j(u),\eta_j(u))+{\tt i}{\tt P}_2(u,\zeta_j(u),\eta_j(u))={\tt P}(u,\zeta_j(u)+{\tt i}\eta_j(u))=f_u(\zeta_j(u)+{\tt i}\eta_j(u))=0.
$$ 
Consequently,
$$
{\tt P}_1(u,\zeta_j(u),\eta_j(u))=0,\ {\tt P}_2(u,\zeta_j(u),\eta_j(u))=0.
$$ 
Let ${\tt R}({\tt u},{\tt x})\in\Z[{\tt u},{\tt x}]$ be the resultant, with respect to ${\tt y}$, of the polynomials ${\tt P}_1({\tt u},{\tt x},{\tt y})$ and ${\tt P}_2({\tt u},{\tt x},{\tt y})$. For each $u\in\R^n$ the real number $\eta_j(u)$ is a common root of ${\tt P}_1(u,\zeta_j(u),{\tt y})$ and ${\tt P}_2(u,\zeta_j(u),{\tt y})$, so ${\tt R}(u,\zeta_j(u))=0$ for $u\in\R^n$ and $1\leq j\leq n$. Thus, $\zeta_j$ has semialgebraic graph, as claimed.

For each $p\in\R^n$ the cardinality of the fiber $\sigma^{-1}(\sigma(p))$ is less than or equal to $\ord({\mathcal S}_n)=n!$. The equality is achieved if the coordinates of $x$ are pairwise distinct. Let us check: \em $\sigma:\R^n\to N$ is a semialgebraic finite quasi-covering\em. It is enough to show: \em it is an open and closed map\em, or equivalently, \em $\pi:\R^n\to\R^n/E$ is an open and closed map\em. 

Let $A$ be an open (resp. closed) subset of $\R^n$. Then the union
$$
\pi^{-1}(\pi(A))=\bigcup_{\gamma\in{\mathcal S}_n}{\widehat\gamma}^{-1}(A)
$$
is an open (resp. closed) subset of $\R^n$, so $\pi(A)$ is open (resp. closed) in $\R^n/E$. 

As $\sigma^{-1}(\sigma(z))=\{\gamma(z):\ \gamma\in{\mathcal S}_n\}$ for each $z\in\R^n$, the collapsing set of $\sigma$ is 
$$
\Cc_{\sigma}=\{(z,\ldots,z)\in\R^n:\ z\in\R\},
$$
whereas the branching set of $\sigma$ is 
$$
{\mathcal B}_{\sigma}=\bigcup_{1\leq i<j\leq n}\{(x_1,\dots,x_n)\in\R^n:\ x_i=x_j\},
$$ 
which is a finite union of hyperplanes of $\R^n$ (and it is nowhere dense in $\R^n$). 

The inclusion right to left is clear. Suppose conversely that the coordinates of the point $x:=(x_1,\ldots,x_n)\in \mathbb{R}^n$ are pairwise distinct. Let $I_i\subset\R$ be an open interval that contains $x_i$ and satisfies $I_i\cap I_j=\varnothing$ if $i\neq j$. The restriction of $\sigma$ to $\prod_{i=1}^nI_i$ is a homeomorphism onto its image. Thus, $x\not\in{\mathcal B}_{\sigma}$.

Observe that $\sigma^{-1}(\sigma({\mathcal B}_{\sigma}))={\mathcal B}_{\sigma}$, so $\mathbb{R}^n_{\text{reg}}=\mathbb{R}^n\setminus{\mathcal B}_{\sigma}$.

The restriction map $\sigma|_{\mathbb{R}^n_{\text{reg}}}:\mathbb{R}^n_{\text{reg}}\to N\setminus{\mathcal R}_{\sigma}$ is an $(n!)$-unbranched covering. Let us show next: \em $\sigma$ is an $(n!)$-branched covering\em.

For each $\gamma\in {\mathcal S}_n$ consider the semialgebraic section $s_\gamma:=\widehat{\gamma}\circ s:N\to \R^n$ of $\sigma$, where the semialgebraic map $s$ was defined in \eqref{sectt}. Pick $a\in N$ and write 
$$
f_a({\tt t}):={\tt t}^n+\sum_{k=1}^{n}(-1)^ka_k{\tt t}^{k}=({\tt t}-b_1)^{k_1}\cdots ({\tt t}-b_\ell)^{k_{\ell}}
$$ 
(where $k_1+\cdots+k_\ell=n$). The cardinality of $\pi^{-1}(a)$ is $d:=\frac{n!}{k_1!\cdots k_\ell!}$ and $\sigma^{-1}(a)=\{s_\gamma(a):\ \gamma\in{\mathcal S}_n\}$. Write $\pi^{-1}(a):=\{p_1,\ldots,p_d\}$ and let $V$ be a connected open semialgebraic neighborhood of $a$ in $N$ such that there exist pairwise disjoint connected open semialgebraic neighborhoods $U_1,\ldots,U_d$ of $p_1,\ldots,p_d$ satisfying $\sigma(U_i)=V$ and $\sigma^{-1}(V)=\bigsqcup_{i=1}^dU_i$ (use Lemma \ref{disting}). Define ${\mathcal S}_{n,i}:=\{\gamma\in{\mathcal S}_n:\ s_\gamma(a)=p_i\}$. Thus, ${\mathcal S}_{n,i}\cap{\mathcal S}_{n,j}=\varnothing$ for $i\neq j$ and ${\mathcal S}_n=\bigsqcup_{i=1}^d{\mathcal S}_{n,i}$. In addition, if $i\neq j$, there exists $\gamma_{ij}\in {\mathcal S}_n$ such that $\widehat{\gamma}_{ij}(p_i)=p_j$. The map ${\mathcal S}_{n,i}\to{\mathcal S}_{n,j},\ \gamma\mapsto\gamma_{ij}\circ\gamma$ is a bijection. We deduce that the cardinality of each ${\mathcal S}_{n,i}$ equals $r:=k_1!\cdots k_\ell!$. In addition, $U_i=s_\gamma(V)$ for each $\gamma\in {\mathcal S}_{n,i}$ and each $i=1,\ldots,d$. The reader can check that ${\mathcal B}_{\sigma|_{U_i}}={\mathcal B}_\sigma\cap U_i$, ${\mathcal R}_{\sigma|_{U_i}}={\mathcal R}_\sigma\cap V$ and $U_{i,\reg}=U_i\cap \mathbb{R}^n_{\text{reg}}$. The restriction map $\sigma|_{U_{i,\reg}}:U_{i,\reg}=U_i\cap \mathbb{R}^n_{\text{reg}}\to V\setminus{\mathcal R}_{\sigma|_{U_i}}=V\setminus{\mathcal R}_\sigma$ is an unbranched semialgebraic covering of $r$ sheets (the ramification index at each point $p_i$ is equal to $r$). Consequently, $V$ is a special neighborhood of $a$ and $U_1,\ldots,U_d$ are the corresponding exceptional neighborhoods for $p_1,\ldots,p_d$. 

\bibliographystyle{amsalpha}

\end{document}